\newcounter{enumitemp}
\newcommand\pref[1]{(\ref{#1})}
\newtheorem{theorem}{Theorem}
\newtheorem{lemma}[theorem]{Lemma}
\newtheorem{corollary}[theorem]{Corollary}
\newtheorem{proposition}[theorem]{Proposition}
\theoremstyle{definition}
\newtheorem{definition}[theorem]{Definition}
\newcounter{remarks}
{\paragraph*{Remarks}\smallskip
 \begin{list}{\arabic{remarks}. }{\usecounter{remarks}%
 \setlength{\leftmargin}{0in}%
 \setlength{\rightmargin}{0in}%
 \setlength{\labelsep}{0pt}%
 \setlength{\labelwidth}{0pt}%
 \setlength{\listparindent}{0pt}%
 }
}
{
\end{list}
}
\newcommand\from\colon
\newcommand\inv{{-1}}
\newcommand\subgroup{<}
\newcommand\infinity\infty
\newcommand\disjunion\coprod
\newcommand\act\curvearrowright
\DeclareMathOperator\image{image}
\DeclareMathOperator\coindex{coindex}
\DeclareMathOperator\Area{Area}
\newcommand{\N}{{\mathbb N}}
\newcommand{\I}{{\mathcal I}}
\newcommand\M{{\mathcal M}}
\newcommand{\V}{\mathcal V}
\renewcommand\O{{\mathcal O}}
\newcommand{\Out}{\mathsf{Out}}
\newcommand{\Aut}{\mathsf{Aut}}
\newcommand{\Inn}{\mathsf{Inn}}
\newcommand{\Stab}{\mathsf{Stab}}
\newcommand{\F}{\mathcal F}
\newcommand{\A}{\mathcal A}
\newcommand{\ti} {\tilde}
\newcommand{\noneg}{NEG}
\renewcommand\neg\noneg
\newcommand{\wt}{\widetilde}
\newcommand\nt{\text{nt}}
\newcommand{\comment}[1]{}
\newcommand\BookOne{\cite{BFH:TitsOne}}
\newcommand\bdy\partial
\newcommand\intersect\cap
\newcommand\union\cup
\newcommand\<\langle
\renewcommand\>\rangle
\newcommand\meet\wedge
\newcommand\composed{\circ}
\newcommand\cross\times
\newcommand\restrict{\bigm |}
\newcommand\wh{\widehat}
\newcommand\inject\hookrightarrow
\newcommand\reals{\mathbf{R}}
\newcommand\abs[1]{\left|#1\right|}
\DeclareMathOperator\rank{rank}
\DeclareMathOperator\CVK{\mathcal{K}}
 \newcommand\surjection\twoheadrightarrow
\newcommand\suchthat{\bigm|}
\DeclareMathOperator\MCG{\mathcal{MCG}}
\newcommand\MilnorSvarc{Milnor-\v{S}varc}
\newcommand\Autre[1][]{\mathbb{A}^{#1}}
\newcommand\con{\gamma}
\title{Lipschitz retraction and distortion for subgroups of $\Out(F_n)$}
\author{Michael Handel and Lee Mosher}
\begin{document}

\maketitle

\begin{abstract}
Given a free factor $A$ of the rank n free group $F_n$, we characterize when the subgroup of $\Out(F_n)$ that stabilizes the conjugacy class of $A$ is distorted in $\Out(F_n)$. We also prove that the image of the natural embedding of $\Aut(F_{n-1})$ in $\Aut(F_n)$ is nondistorted, that the stabilizer in $\Out(F_n)$ of the conjugacy class of any free splitting of $F_n$ is nondistorted, and we characterize when the stabilizer of the conjugacy class of an arbitrary free factor system of $F_n$ is distorted. In all proofs of nondistortion, we prove the stronger statement that the subgroup in question is a Lipschitz retract. As applications we determine Dehn functions and automaticity for $\Out(F_n)$ and $\Aut(F_n)$.
\end{abstract}

Given a finitely generated group $G$, a finitely generated subgroup $H$ is \emph{undistorted} in $G$ if the inclusion map $H \inject G$ is a quasi-isometric embedding with respect to word metrics. Nondistortion may be verified by constructing a Lipschitz retraction $G \mapsto H$ (see Lemma~\ref{LemmaCoarseRetract}), and this is particularly useful because it has extra consequences: if $G$ is finitely presented, and if there is a Lipschitz retraction $G \to H$, then $H$ is finitely presented and the Dehn function of $G$ is dominated below by the Dehn function of $H$ (see Proposition~\ref{PropDehnBound}).

In the mapping class group $\MCG(S)$ of any closed oriented surface $S$, the subgroup that stabilizes the isotopy class of any essential curve or subsurface of $S$ is a Lipschitz retract of $\MCG(S)$, and therefore undistorted. The proof of this fact is a simple application of the projection methods of \cite{MasurMinsky:complex2}; see ``Motivation from mapping class groups'' below.

As a step in understanding the large scale geometry of the outer automorphism group $\Out(F_n)$ of a rank $n$ free group $F_n$, one might consider the following. The action of the automorphism group $\Aut(F_n)$ on subgroups of $F_n$ induces an action of $\Out(F_n)$ on conjugacy classes of subgroups of $F_n$. Given a free factor $A < F_n$, the stabilizer $\Stab[A]$ of its conjugacy class is a finitely generated subgroup of  $\Out(F_n)$. Regarding a free factor as an analogue of an essential subsurface, one might ask whether $\Stab[A]$ is an undistorted subgroup of $\Out(F_n)$. The answer is surprising:

\begin{theorem}\label{TheoremCorank}
Given a proper, nontrivial free factor $A < F_n$:
\begin{enumerate}
\item \label{ItemCorank1Undistorted}
If $\rank(A) = n-1$ then $\Stab[A]$ is a Lipschitz retract of $\Out(F_n)$, and therefore is undistorted in $\Out(F_n)$.
\item \label{ItemHigherCorankDistorted}
If $\rank(A) \le n-2$ then $\Stab[A]$ is distorted in $\Out(F_n)$.
\end{enumerate}
\end{theorem}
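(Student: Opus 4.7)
The two parts require quite different techniques, which I outline in turn.

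For part (\ref{ItemCorank1Undistorted}), the plan is to construct a Lipschitz retraction $r \colon \Out(F_n) \to \Stab[A]$ and invoke Lemma~\ref{LemmaCoarseRetract}. Let $\Omega$ be the $\Out(F_n)$-orbit of $[A]$ in the set of conjugacy classes of corank-$1$ free factors of $F_n$. The orbit map $\pi(\phi) := \phi[A]$ has $\pi^{\inv}([A]) = \Stab[A]$, so any set-theoretic section $\sigma \colon \Omega \to \Out(F_n)$ with $\sigma([A]) = \Id$ yields a retraction $r(\phi) := \sigma(\phi[A])^{\inv}\phi$. The retraction is Lipschitz precisely when $\sigma$ is Lipschitz with respect to an orbit-graph structure on $\Omega$ whose edges are labelled by a fixed finite generating set of $\Out(F_n)$. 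The combinatorial content I expect to need is a corank-$1$ Whitehead-type statement: any two corank-$1$ free factors are related by a controlled sequence of Whitehead moves, and the resulting orbit graph has only finitely many edge-orbits under $\Out(F_n)$. Then $\sigma$ can be constructed equivariantly along a spanning tree of a fundamental domain, with uniformly bounded labels.

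For part (\ref{ItemHigherCorankDistorted}), the goal is to produce a sequence $\phi_k \in \Stab[A]$ with $|\phi_k|_{\Out(F_n)}/|\phi_k|_{\Stab[A]} \to 0$. Since $\rank(A) \le n-2$, write $F_n = A * B$ with $\rank(B) \ge 2$ and pick basis elements $s, t$ of $B$. Let $g \in \Stab[A]$ be represented by the automorphism that is the identity on $A$ and sends $s \mapsto s$, $t \mapsto ts$. The plan is to find, inside $\Out(F_n)$, a Baumslag--Solitar-type relation $h g h^{\inv} = g^m$ with $m \ge 2$ and $h$ outside $\Stab[A]$. Iterating yields $\phi_k := g^{m^k} = h^k g h^{-k}$ with $|\phi_k|_{\Out(F_n)} = O(k)$, while $\<g\>$ is undistorted in $\Stab[A]$ (detectable via a train-track growth invariant), so $|\phi_k|_{\Stab[A]} \asymp m^k$, yielding exponential distortion of $\Stab[A]$.

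The principal obstacle is in part (\ref{ItemHigherCorankDistorted}): constructing a suitable $h$ and verifying the relation $hgh^{\inv} = g^m$ in $\Out(F_n)$. Such an $h$ cannot lie in $\Stab[A]$, since otherwise the relation would already distort $\<g\>$ inside $\Stab[A]$; so the argument must exploit the extra freedom provided by $\rank(B) \ge 2$. A natural candidate for $h$ is a Nielsen-type automorphism that mixes a generator of $B$ into $A$ in such a way that conjugating $g$ by $h$ rescales the twist parameter by a factor of $m$. Proving that $\<g\>$ is undistorted in $\Stab[A]$ is the secondary technical point and should follow from standard train-track invariants. Part (\ref{ItemCorank1Undistorted}), by contrast, rests on largely routine combinatorics of corank-$1$ free factors once the orbit-graph section is assembled, so I expect its main subtlety to be equivariant bookkeeping rather than a genuine new idea.
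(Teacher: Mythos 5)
Your two proposals take genuinely different routes from the paper, and unfortunately both contain gaps that are not cosmetic.

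\textbf{Part~\pref{ItemHigherCorankDistorted}.} The proposed Baumslag--Solitar relation $hgh^{\inv}=g^m$ with $m\ge 2$ cannot exist in $\Out(F_n)$. If $g$ has infinite order and such a relation held, then $g^{m^k}=h^kgh^{-k}$ would have word length $O(k)$ in $\Out(F_n)$, so the infinite cyclic subgroup $\<g\>$ would be exponentially distorted in $\Out(F_n)$. But Alibegovi\'c \cite{Alibegovic:translation} proves that every infinite cyclic subgroup of $\Out(F_n)$ is \emph{un}distorted, which rules this out. Your argument is in fact internally inconsistent: you invoke train-track invariants to argue $\<g\>$ is undistorted in $\Stab[A]$ (hence a fortiori in $\Out(F_n)$), which directly contradicts the distortion implied by the very relation you are postulating. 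The paper's $\phi_k=\theta^k\phi_0\theta^{-k}$ is not a power of $\phi_0$; the $\phi_k$ are distinct partial-conjugation automorphisms $e_n\mapsto e_n u_k$ with $u_k=\Theta^k(e_1)$ produced by an exponentially growing train track map on a proper free factor $B\supsetneq A$. The sequence stays in $\Stab[A]$ but escapes every cyclic subgroup, and the exponential lower bound on $\Stab[A]$-length comes from a counting function $i_{A,B}(\con,G)$ (counting crossings of a designated edge in the quotient of the $B$-minimal subtree), shown to be $2$-Lipschitz on the vertex set of $\CVK_n^{[A]}$ (Lemma~\ref{LemmaCount}); notably, that counting technique is modelled on the one Alibegovi\'c used to prove the undistortion theorem that sinks your BS approach.

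\textbf{Part~\pref{ItemCorank1Undistorted}.} Defining $r(\phi)=\sigma(\phi[A])^{\inv}\phi$ from a section $\sigma$ gives a retraction onto $\Stab[A]$, but the claim that ``the retraction is Lipschitz precisely when $\sigma$ is Lipschitz with respect to an orbit-graph structure'' is not correct. Tracking a generator step, $r(\phi)^{\inv}r(\phi s)$ equals $\sigma(\omega)\,s\,\sigma(\omega\cdot s)^{\inv}$ (for $\omega=[A]\cdot\phi$ in the right-action convention), and having $\sigma$ move a bounded distance in $\Out(F_n)$ does \emph{not} bound the $\Stab[A]$-word length of this element; when $\omega\cdot s=\omega$ it is the conjugate $\sigma(\omega)s\sigma(\omega)^{\inv}$, whose $\Stab[A]$-length can blow up as $\sigma(\omega)$ ranges over the orbit. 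In a nonabelian group a coarse-Lipschitz section of the coset projection does not yield a Lipschitz retraction; the two conditions are not equivalent. The paper's route avoids the issue entirely: it shows $\Stab[A]=\Stab[T]$ for an explicit one-edge free splitting $T$, passes to the spine of outer space via Corollary~\ref{CorollarySubgroupComplex}, and constructs a concrete vertex-level map $\CVK_n\to\CVK_n^T$ (using $A$-minimal subtrees and an auxiliary choice of boundary points $\xi_{\ell m}\in\bdy F_n$ to anchor the edge re-attachment) which is then verified directly to be a $1$-Lipschitz retraction. The hard content of the theorem is in this geometric construction, not in the orbit-graph combinatorics.
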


\break

Upon discovering this theorem, we searched for and found wider classes of Lipschitz retract subgroups and distorted subgroups, and we applied Lipschitz retracts to settle questions about Dehn functions and automaticity of $\Out(F_n)$ and $\Aut(F_n)$: 
\begin{itemize}
\item[$*$] Theorems~\ref{CorOutAutRetract} and~\ref{TheoremAutRetract} assert that there are subgroups isomorphic to $\Aut(F_{n-1})$ in both of the groups $\Out(F_n)$ and $\Aut(F_n)$ which are Lipschitz retracts and therefore nondistorted.
\item[$*$] Corollaries~\ref{CorollaryDehn} and~\ref{CorollaryNotAutomatic} say that for $n \ge 3$ the groups $\Aut(F_n)$ and $\Out(F_n)$ have exponential Dehn functions and are not automatic.
\item[$*$] Theorem~\ref{TheoremFreeSplitting} generalizes case~\pref{ItemCorank1Undistorted} of Theorem~\ref{TheoremCorank} by providing Lipschitz retractions to stabilizers of arbitrary free splittings of $F_n$, which are therefore undistorted in $\Out(F_n)$.
\item[$*$] Theorem~\ref{TheoremCoindex} generalizes Theorem~\ref{TheoremCorank} by characterizing Lipschitz retraction and distortion for stabilizers of conjugacy classes of arbitrary free factor systems of $F_n$.
\end{itemize}

\subsection*{More Lipschitz retracts, and applications.} In addition to the special case of Theorem~\ref{TheoremFreeSplitting} mentioned just above, here is another important special case:

\begin{corollary} \label{CorOutAutRetract}
There is a Lipschitz retraction from $\Out(F_n)$ to a subgroup isomorphic to $\Aut(F_{n-1})$, which is therefore undistorted.
\end{corollary}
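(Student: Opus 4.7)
Let $A<F_n$ be a free factor with $\rank(A)=n-1$, and fix $b\in F_n$ so that $F_n = A \ast \langle b \rangle$. The subgroup $H\subset\Out(F_n)$ in question is the image of the map $\Aut(A)\to\Out(F_n)$ sending $\alpha$ to $[\tilde\alpha]$, where $\tilde\alpha\in\Aut(F_n)$ satisfies $\tilde\alpha|_A=\alpha$ and $\tilde\alpha(b)=b$. First I would verify that this map is injective: if $\tilde\alpha = i_g$ for some $g\in F_n$, then $gbg\inv = b$ forces $g\in\langle b\rangle$, after which $gag\inv \in A$ for all $a\in A$ forces $g=1$. Thus $H\cong\Aut(F_{n-1})$, and $H \subset \Stab[A]$.

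The plan is to apply Theorem~\ref{TheoremFreeSplitting} to the one-edge free splitting $F_n = A \ast \langle b \rangle$. That theorem produces a Lipschitz retraction $\rho_1 \from \Out(F_n) \to S$, where $S$ is the stabilizer in $\Out(F_n)$ of the conjugacy class of this splitting. It then suffices to construct a Lipschitz retraction $\rho_2 \from S \to H$, since $\rho_2 \composed \rho_1$ will be the desired retraction.

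To produce $\rho_2$, I would first identify $S$ concretely. Every $\outo\in S$ admits a representative $\auto\in\Aut(F_n)$ with $\auto(A)=A$ and $\auto(\langle b\rangle)=\langle b\rangle$, hence $\auto(b)=b^{\pm 1}$ (for $n\ge 3$ there is no swap between $A$ and $\langle b\rangle$, as their ranks differ). The ambiguity in such a representative is by inner automorphisms $i_g$ with $g \in N(A)\intersect N(\langle b\rangle) = A \intersect \langle b\rangle = \{1\}$, so in fact the representative is unique. Letting $\tau\in\Aut(F_n)$ denote the involution fixing $A$ pointwise and sending $b\mapsto b\inv$, one checks $\tau$ commutes with every extension $\tilde\alpha$, giving $S \cong H \times \langle\tau\rangle \cong \Aut(F_{n-1})\times\Z/2$. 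The projection $S\to H$ onto the first factor is then a retraction onto an index-two subgroup, and is automatically Lipschitz.

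The hard part is Theorem~\ref{TheoremFreeSplitting} itself, which the corollary depends on; granted that theorem, the remaining step is the elementary identification above of the stabilizer of the splitting $F_n = A \ast \langle b \rangle$ as $\Aut(F_{n-1})\times\Z/2$ using uniqueness of the lift.
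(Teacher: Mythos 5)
Your proof is correct and runs parallel to the paper's: both invoke Theorem~\ref{TheoremFreeSplitting} for a one-edge-orbit free splitting $T$ whose largest vertex group is a copy of $F_{n-1}$, and then retract $\Stab[T]$ onto a finite-index subgroup isomorphic to $\Aut(F_{n-1})$. What differs is the choice of $T$. You take the segment splitting $F_n = A \ast \langle b\rangle$, whose quotient graph of groups is an arc with both endpoints nontrivially labelled (by groups of ranks $n-1$ and $1$), and identify $\Stab[T]\cong \Aut(F_{n-1})\times \Z/2$ by a direct computation with lifts, then project off the $\Z/2$. The paper instead chooses a ``sewing needle'' graph of groups $X$ with two vertices $V,W$, an edge from $V$ to $W$, and a loop at $W$, all labels trivial except the valence-one vertex $V$ labelled by $F_{n-1}$. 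Because $V$ is the unique vertex with nontrivial label and has valence $1$, Lemma~\ref{LemmaFSDescription} gives immediately $\Stab^f[T]\cong \Aut^{1}(F_{n-1})=\Aut(F_{n-1})$ as a finite-index subgroup, with no extra factor to project away. Had you applied Lemma~\ref{LemmaFSDescription} to your segment splitting you would also get $\Stab[T]=\Stab^f[T]\cong \Aut^{1}(A)\oplus\Aut^{1}(\langle b\rangle)\cong \Aut(F_{n-1})\oplus \Z/2$, confirming your hand computation. Both routes work. One small loose end in your write-up: the existence of a single representative $\hat\phi$ with $\hat\phi(A)=A$ \emph{and} $\hat\phi(\langle b\rangle)=\langle b\rangle$ is asserted but not justified; it does not follow merely from $\phi$ preserving the two conjugacy classes separately, but needs the splitting structure. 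After normalizing a representative so that $\hat\phi(A)=A$, a $\Phi$-twisted conjugacy $f\colon T\to T$ fixing the vertex with stabilizer $A$ must carry the adjacent vertex with stabilizer $\langle b\rangle$ to some $g\langle b\rangle$ with $g\in A$, and a further conjugation by $g^{-1}$ (which preserves $A$) brings $\langle b\rangle$ back. Lemma~\ref{LemmaFSDescription} packages exactly this bookkeeping in general; invoking it would let you skip the ad hoc argument entirely.
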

\noindent The proof is given below, as an application of Theorem~\ref{TheoremFreeSplitting}.

Around 2002 Jose Burillo asked whether the natural $\Aut(F_{n-1})$ subgroup of $\Aut(F_n)$ is undistorted \cite{Burillo:private}. This subgroup is defined as all automorphisms of $F_n = \<a_1,\ldots,a_{n-1},a_n\>$ which preserve the free factor $\<a_1,\ldots,a_{n-1}\>$ and the element $a_n$. The methods of Theorem~\ref{TheoremCorank}~\pref{ItemCorank1Undistorted} apply to answer this question affirmatively, although the proof, given in outline below, is even easier in this case; see Section~\ref{SectionAutNondistortion} for full details.

\begin{theorem}\label{TheoremAutRetract}
There is a Lipschitz retraction from $\Aut(F_n)$ to the natural subgroup isomorphic to $\Aut(F_{n-1})$, which is therefore undistorted. 
\end{theorem}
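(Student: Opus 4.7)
The retraction $\rho \colon \Aut(F_n) \to \Aut(F_{n-1})$ is constructed in two stages, exploiting the fact that $\Aut(F_n)$ (unlike $\Out(F_n)$) acts on the element $a_n \in F_n$ itself rather than on its conjugacy class, so the key ``undoing'' step can be performed algebraically by word reduction.

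\emph{Stage 1.} For each primitive element $w \in F_n$ fix, via a deterministic implementation of Whitehead's peak-reduction algorithm followed by a bounded permutation of basis elements, an automorphism $\Psi_w \in \Aut(F_n)$ with $\Psi_w(w) = a_n$, with $\Psi_{a_n} = \Id$, and with $|\Psi_w|_{\Aut(F_n)} \le C \, |w|_{F_n}$ for a universal constant $C$. Set $\rho_0(\Phi) := \Psi_{\Phi(a_n)} \circ \Phi$, which lies in $\Stab(a_n) := \{\Phi : \Phi(a_n) = a_n\}$.

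\emph{Stage 2.} The surjection $F_n \twoheadrightarrow F_n/\nc{a_n} \cong F_{n-1}$ is equivariant under $\Stab(a_n)$, inducing a group homomorphism $q \colon \Stab(a_n) \to \Aut(F_{n-1})$. The natural inclusion $\Aut(F_{n-1}) \inject \Stab(a_n)$ is a section of $q$, so $q$ restricts to the identity on $\Aut(F_{n-1})$; moreover on words $q$ simply deletes every $a_n^{\pm 1}$, so it is $1$-Lipschitz. Define $\rho := q \circ \rho_0$. If $\Phi \in \Aut(F_{n-1})$ then $\Phi(a_n) = a_n$, so $\Psi_{\Phi(a_n)} = \Id$, giving $\rho_0(\Phi) = \Phi$ and $\rho(\Phi) = q(\Phi) = \Phi$, verifying the retraction property.

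\emph{Main obstacle: coarse Lipschitz estimate.} By Lemma~\ref{LemmaCoarseRetract} it suffices to bound $d_{\Aut(F_{n-1})}(\rho(\Phi), \rho(\Phi s))$ uniformly as $s$ ranges over a fixed finite generating set of $\Aut(F_n)$. When $s \in \Stab(a_n)$ the bound is immediate: $\Psi_{\Phi s(a_n)} = \Psi_{\Phi(a_n)}$, so $\rho_0(\Phi s) = \rho_0(\Phi) \cdot s$ and the bound follows from the fact that $q$ is a Lipschitz homomorphism. The harder case is when $s$ perturbs $a_n$ by a Nielsen move: then $\Phi s(a_n) = \Phi(a_n) \cdot \Phi(v)$ for a short word $v$, but $\Phi(v)$ may be long, so $\Psi_{\Phi(a_n)}$ and $\Psi_{\Phi s(a_n)}$ can differ by a long automorphism of $F_n$. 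The crux of the proof is to show that after applying $q$ this discrepancy reduces to a bounded element of $\Aut(F_{n-1})$: the extra Whitehead reduction needed to absorb the factor $\Phi(v)$ consists of moves whose action on $F_{n-1}$ coordinates, once every $a_n^{\pm 1}$ is deleted by $q$, is a uniformly bounded modification. This requires a careful comparison of the Whitehead reductions of $w$ and $w \cdot \Phi(v)$ and is the technical core of Section~\ref{SectionAutNondistortion}; once established, the coarse Lipschitz property of $\rho$ follows, and nondistortion of $\Aut(F_{n-1})$ in $\Aut(F_n)$ is an immediate consequence.
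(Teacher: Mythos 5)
Your proposal reduces the theorem to a claim that you identify yourself as "the technical core" but do not prove: that composing with the deletion homomorphism $q$ makes the discrepancy between $\Psi_{\Phi(a_n)}$ and $\Psi_{\Phi s(a_n)}$ uniformly bounded. This is precisely the content of the theorem in disguise, and asserting that it follows from "a careful comparison of the Whitehead reductions of $w$ and $w\cdot\Phi(v)$" is not an argument. In fact I see no reason the claim should hold in the form stated: the Whitehead peak-reduction sequences for $w$ and $w\cdot\Phi(v)$ can be combinatorially unrelated (there is no stability of the reduction path under appending a long suffix), so $\Psi_w$ and $\Psi_{w\Phi(v)}$ can differ by an element of $\Aut(F_n)$ of length comparable to $|\Phi(v)|$, and there is no a priori mechanism forcing the image of that difference under $q$ to be short. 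You would need a genuinely new structural result about Whitehead reduction to make Stage~2 go through, and that result is the whole ballgame.

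For comparison, the paper's proof does not touch Whitehead's algorithm at all and sidesteps any word-combinatorial comparison. It works in the spine $\Autre_n$ of autre espace via Corollary~\ref{CorollarySubgroupComplex}: given a pointed marked graph $(G,p,\rho)$, pass to the universal cover $(\wt G,\ti p)$, take the minimal $F_{n-1}$-subtree $S$ with its nearest point $\ti q$ to $\ti p$, quotient by $F_{n-1}$ to get a pointed $F_{n-1}$-marked graph, and reattach a loop at the basepoint. The Lipschitz bound (in fact Lipschitz constant $1$) then follows from Lemma~\ref{LemmaCollapse}, because collapsing a forest in $G$ induces a compatible forest collapse on the minimal subtree $S$ and moves the nearest-point projection of $\ti p$ in a controlled way. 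The geometric picture automatically packages the "bounded discrepancy" that your approach would have to extract by hand from Whitehead reduction. If you want to pursue the algebraic route, the missing lemma you would need to state and prove is: there is a choice of $\Psi_w$ for each primitive $w$ such that $d_{\Aut(F_{n-1})}\bigl(q(\Psi_w),\,q(\Psi_{w'})\bigr)$ is uniformly bounded whenever $w' = w\cdot v$ with $|v|$ bounded and both $w,w'$ primitive --- and it is far from clear that this is achievable.
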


Combining Theorem~\ref{CorOutAutRetract}, Theorem~\ref{TheoremAutRetract}, and an evident induction we obtain:

\begin{corollary}\label{CorollaryOutToAutRetract}
If $n \ge 4$ there are Lipschitz retractions from each of the groups $\Out(F_n)$ and $\Aut(F_n)$ to subgroups isomorphic to $\Aut(F_3)$. 
\end{corollary}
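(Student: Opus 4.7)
The plan is simply to iterate the Lipschitz retractions already constructed, composing them to reduce rank one step at a time until we arrive at $\Aut(F_3)$. For the $\Aut(F_n)$ case the iteration is pure Theorem~\ref{TheoremAutRetract}: each application produces a Lipschitz retraction from $\Aut(F_k)$ onto the natural subgroup isomorphic to $\Aut(F_{k-1})$, for every $k \ge 4$. For the $\Out(F_n)$ case one invokes Corollary~\ref{CorOutAutRetract} once to retract $\Out(F_n)$ onto a subgroup isomorphic to $\Aut(F_{n-1})$, and then proceeds as in the $\Aut$ case.

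The only point requiring verification beyond citing prior results is that a composition of Lipschitz retractions is again a Lipschitz retraction. Suppose $r_1 \from G \to H$ and $r_2 \from H \to K$ are Lipschitz retractions, where $K \subset H \subset G$ and $r_i$ is the identity on its image. Then $r_2 \composed r_1 \from G \to K$ is tautologically a set-theoretic retraction. To see it is Lipschitz one must compare word metrics on the different groups: $r_1$ is Lipschitz from the word metric on $G$ to the word metric on $H$, and $r_2$ is Lipschitz from the word metric on $H$ to the word metric on $K$, so the composition is Lipschitz. (Note that the word metric on $H$ is the one used abstractly, not the one induced from $G$; but the existence of $r_1$ already forces these to be quasi-isometric via Lemma~\ref{LemmaCoarseRetract}, so no confusion arises.)

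With this composition principle the proof is a straightforward downward induction on rank. Starting from $\Aut(F_n)$ with $n \ge 4$, successive applications of Theorem~\ref{TheoremAutRetract} yield Lipschitz retractions
\[
\Aut(F_n) \twoheadrightarrow \Aut(F_{n-1}) \twoheadrightarrow \Aut(F_{n-2}) \twoheadrightarrow \cdots \twoheadrightarrow \Aut(F_3),
\]
whose composition is the desired Lipschitz retraction. Starting from $\Out(F_n)$ one prepends the retraction $\Out(F_n) \twoheadrightarrow \Aut(F_{n-1})$ supplied by Corollary~\ref{CorOutAutRetract}, and again composes. There is no substantive obstacle; the entire argument is bookkeeping once the composition lemma for Lipschitz retractions is noted.
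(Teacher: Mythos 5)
Your proof is correct and is exactly the route the paper takes: the paper's own proof is the one-line remark "Combining Theorem~\ref{CorOutAutRetract}, Theorem~\ref{TheoremAutRetract}, and an evident induction we obtain," and your write-up simply makes that evident induction explicit by spelling out the composition lemma for Lipschitz retractions and chaining $\Out(F_n) \twoheadrightarrow \Aut(F_{n-1}) \twoheadrightarrow \cdots \twoheadrightarrow \Aut(F_3)$ (respectively $\Aut(F_n) \twoheadrightarrow \cdots \twoheadrightarrow \Aut(F_3)$). No gaps; the parenthetical about word metrics on $H$ is harmless but unnecessary, since the intrinsic word metric on $H$ is what is used both as the codomain metric for $r_1$ and the domain metric for $r_2$, so the composition is Lipschitz directly.
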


In \cite{HatcherVogtmann:isoperimetric}, Hatcher and Vogtmann found exponential upper bounds for Dehn functions of $\Aut(F_n)$ and $\Out(F_n)$ when $n \ge 3$. In \cite{BridsonVogtmann:GeometryOfAut}, Bridson and Vogtmann found exponential lower bounds when $n=3$, thereby completely determining the growth types of those Dehn functions to be exponential. Lower bounds in rank~$\ge 4$ were left unresolved. 

Combining Corollary~\ref{CorollaryOutToAutRetract} with the lower bounds in rank~$3$ from \cite{BridsonVogtmann:GeometryOfAut}, and with the fact that the Dehn function of a group is dominated below by the Dehn function of any Lipschitz retract (see Proposition~\ref{PropDehnBound}), we obtain the desired exponential lower bounds for $n \ge 4$, thus completely determining the growth types of the Dehn functions of $\Aut(F_n)$ and $\Out(F_n)$:

\begin{corollary}\label{CorollaryDehn}
For all $n \ge 4$, the Dehn functions of the groups $\Out(F_n)$ and $\Aut(F_n)$ have exponential lower bounds, and therefore they have exponential growth type.
\end{corollary}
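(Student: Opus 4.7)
The proof of Corollary~\ref{CorollaryDehn} is essentially already sketched in the paragraph preceding its statement; my plan is simply to assemble the three ingredients that the paper has made available, with no significant obstacle to overcome.

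The plan is as follows. First, invoke the result of Bridson--Vogtmann in \cite{BridsonVogtmann:GeometryOfAut} to record that the Dehn function of $\Aut(F_3)$ has an exponential lower bound (this is the rank~$3$ input that was previously missing for higher ranks). Second, apply Corollary~\ref{CorollaryOutToAutRetract} to obtain, for each $n \ge 4$, a Lipschitz retraction from each of the groups $\Out(F_n)$ and $\Aut(F_n)$ onto a subgroup isomorphic to $\Aut(F_3)$. Third, feed these retractions into Proposition~\ref{PropDehnBound}, which ensures that the Dehn function of any finitely presented group is dominated below by the Dehn function of any Lipschitz retract; this transports the exponential lower bound from $\Aut(F_3)$ up to both $\Out(F_n)$ and $\Aut(F_n)$.

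To conclude that the growth type is exactly exponential, combine the just-established exponential lower bound with the exponential upper bound already established by Hatcher--Vogtmann in \cite{HatcherVogtmann:isoperimetric} for $n \ge 3$. Since exponential dominates from both sides, the Dehn functions of $\Out(F_n)$ and $\Aut(F_n)$ have exponential growth type for every $n \ge 4$, completing the proof.

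There is no genuine obstacle: every substantive piece is imported. The only thing to check is the compatibility of conventions---in particular that the notion of Lipschitz retract used in Proposition~\ref{PropDehnBound} matches the one produced by Corollary~\ref{CorollaryOutToAutRetract}, and that the finite presentability hypothesis needed for the Dehn function comparison is automatic here, since $\Out(F_n)$ and $\Aut(F_n)$ are well known to be finitely presented. These are routine verifications rather than real difficulties.
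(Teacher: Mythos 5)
Your proposal matches the paper's argument exactly: combine the rank~$3$ exponential lower bound of Bridson--Vogtmann with the Lipschitz retractions of Corollary~\ref{CorollaryOutToAutRetract} via Proposition~\ref{PropDehnBound}, then invoke the Hatcher--Vogtmann exponential upper bound to pin down the growth type. There is nothing different in substance or organization.
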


\noindent After this result and its proof were explained to participants of the conference in Luminy in June 2010, Bridson and Vogtmann were able to give another proof of Corollary~\ref{CorollaryDehn}; see \cite{BridsonVogtmann:DehnFunctions}.

\bigskip

In \cite{BridsonVogtmann:GeometryOfAut}, Bridson and Vogtmann use their exponential lower bound for Dehn functions to prove that $\Aut(F_3)$ and $\Out(F_3)$ are not automatic, because the Dehn function of every automatic group has a quadratic upper bound \cite{ECHLPT}. They also proved that $\Aut(F_n)$ and $\Out(F_n)$ are not biautomatic for all $n \ge 4$, but automaticity was left unresolved in those cases. Using Corollary~\ref{CorollaryDehn} we can now resolve this issue:

\begin{corollary}\label{CorollaryNotAutomatic}
For all $n \ge 4$, the groups $\Out(F_n)$ and $\Aut(F_n)$ are not automatic.
\end{corollary}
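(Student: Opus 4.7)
The plan is to derive Corollary~\ref{CorollaryNotAutomatic} as an immediate consequence of Corollary~\ref{CorollaryDehn} together with the standard fact, established by Epstein--Cannon--Holt--Levy--Paterson--Thurston in \cite{ECHLPT}, that the Dehn function of any automatic group is bounded above by a quadratic polynomial. This is precisely the argument Bridson and Vogtmann used in \cite{BridsonVogtmann:GeometryOfAut} to rule out automaticity in rank~$3$; the only missing ingredient then was an exponential lower bound for the Dehn function in higher ranks, which Corollary~\ref{CorollaryDehn} now supplies.

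Concretely, I would argue by contradiction. Suppose $n \ge 4$ and that either $\Out(F_n)$ or $\Aut(F_n)$ is automatic. Then by \cite{ECHLPT} its Dehn function satisfies a quadratic upper bound. On the other hand, Corollary~\ref{CorollaryDehn} provides an exponential lower bound for the same Dehn function. Since no exponential function is eventually dominated by any quadratic polynomial, this is a contradiction, and neither group can be automatic.

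There is essentially no obstacle to overcome here: the substantive geometric content has already been packaged into Corollary~\ref{CorollaryDehn} via the Lipschitz retraction to $\Aut(F_3)$ provided by Corollary~\ref{CorollaryOutToAutRetract} and the exponential lower bound for the Dehn function of $\Aut(F_3)$ from \cite{BridsonVogtmann:GeometryOfAut}. The only step remaining is to invoke \cite{ECHLPT}, so the write-up should be little more than a sentence citing the quadratic upper bound together with a reference to Corollary~\ref{CorollaryDehn}.
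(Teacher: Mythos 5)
Your argument is exactly the paper's: combine the exponential lower bound of Corollary~\ref{CorollaryDehn} with the quadratic upper bound on Dehn functions of automatic groups from \cite{ECHLPT}. Correct, and essentially identical to the proof given in the paper.
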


\subparagraph{Remark.} The proof in \cite{BridsonVogtmann:GeometryOfAut} that $\Out(F_n)$ and $\Aut(F_n)$ are not biautomatic is inductive, based on exploiting theorems about centralizers of finite sets in biautomatic groups due to Gersten and Short \cite{GerstenShort:rational}. It seems possible to us that the method of Lipschitz retraction to subgroups may prove effective in other contexts similar to the present one for inductive verification of lower bounds on isoperimetric functions and failure of automaticity.

\subsection*{Stabilizers of free splittings.} The two special cases of Lipschitz retracts of $\Out(F_n)$ mentioned so far --- of Theorem~\ref{TheoremCorank} case~\pref{ItemCorank1Undistorted};  and Corollary~\ref{CorOutAutRetract} ---  are each best expressed in the language of trees. This leads to a rich collection of undistorted subgroups of $\Out(F_n)$, as expressed in our next theorem. 

Define a \emph{free splitting} of $F_n$ to be a minimal action of $F_n$ on a nontrivial, simplicial tree $T$ with trivial edge stabilizers. As is traditional in this subject, the notation $T$ will incorporate both the tree and the action of $F_n$; we shall denote the action of $g \in F_n$ on $T$ by $x \mapsto g \cdot x$. Two free splittings $T,T'$ are \emph{conjugate} if there exists a simplicial isomorphism $f \from T \mapsto T'$ such that $f(g \cdot x) = g \cdot f(x)$. The group $\Out(F_n)$ acts on the set of conjugacy classes of free splittings of $F_n$, and the stabilizer of the conjugacy class of a free splitting $T$ is denoted~$\Stab[T]$. A free splitting may also be understood using Bass-Serre theory, being the Bass-Serre tree of its quotient graph of groups.

\begin{theorem}\label{TheoremFreeSplitting}
Given a free splitting $T$ of $F_n$, there is a Lipschitz retraction from $\Out(F_n)$ to its subgroup $\Stab[T]$, which is therefore undistorted in $\Out(F_n)$.
\end{theorem}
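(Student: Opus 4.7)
The plan is to build a Lipschitz retraction $r \from \Out(F_n) \surjection \Stab[T]$ by first constructing a $\Stab[T]$-coarsely equivariant Lipschitz projection on the level of an associated space, and then pulling back to the group level via orbit maps and the \MilnorSvarc\ lemma (see Lemma~\ref{LemmaCoarseRetract}). The philosophy is modeled on the mapping class group case mentioned in the introduction, and on the special case treated in Theorem~\ref{TheoremCorank}~\pref{ItemCorank1Undistorted}, with the role of the curve complex and ambient surface replaced here by a deformation subspace of Outer space determined by $T$.

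Concretely, I would work with Culler--Vogtmann Outer space $\CVX$ (or its spine), on which $\Out(F_n)$ acts properly and cocompactly, so that by \MilnorSvarc\ the word metric on $\Out(F_n)$ is quasi-isometric to any orbit metric. Inside $\CVX$ let $\CVX_T$ denote the deformation subspace consisting of marked metric graphs whose universal cover admits an $F_n$-equivariant collapse onto $T$; equivalently, this is the union of those open simplices whose underlying simplicial tree is a refinement of $T$. The subgroup $\Stab[T]$ preserves $\CVX_T$ and acts on it properly and cocompactly (modulo rescaling, $\CVX_T$ is essentially a product of Outer spaces for the vertex groups of $T/F_n$, on which $\Stab[T]$ acts with finitely many simplex types), so the word metric on $\Stab[T]$ is quasi-isometric to any orbit metric on $\CVX_T$. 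The problem thus reduces to constructing a coarse Lipschitz retraction $\rho \from \CVX \surjection \CVX_T$ which is $\Stab[T]$-equivariant; given such a $\rho$, fix a basepoint $X_0 \in \CVX_T$ and set $r(\phi)$ to be any $\alpha \in \Stab[T]$ with $\alpha \cdot X_0$ coarsely closest to $\rho(\phi \cdot X_0)$, from which the retraction and Lipschitz properties follow formally.

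The heart of the proof is the construction of $\rho$. Given $\mathbf G \in \CVX$ with universal cover $\widetilde{\mathbf G}$, take an $F_n$-equivariant simplicial map $\widetilde{\mathbf G} \to T$ induced by the respective markings, and run a Stallings-style folding sequence to deform it into a collapse map from a refinement of $T$; the marked metric graph corresponding to the folded source is declared to be $\rho(\mathbf G) \in \CVX_T$. The retraction property $\rho|_{\CVX_T} = \Id$ is then immediate, since when $\mathbf G$ already refines $T$ no nontrivial folds are needed. The principal obstacle I anticipate is the Lipschitz bound: one must show that an elementary move between adjacent simplices of $\CVX$ (an edge slide, collapse, or expansion) causes only a bounded change in the folded output, and that the chosen fold sequence can be made $\Stab[T]$-equivariant at the coarse level. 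A natural reduction for this analysis is to induct on the complexity of the quotient graph of groups $T/F_n$, so that one ultimately analyzes folds only for a one-edge splitting, at which point the Lipschitz estimate becomes a finite case analysis tracking how elementary Outer-space moves interact with the basic fold types and the partial-conjugation generators of $\Stab[T]$.
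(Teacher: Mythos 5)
The overall framework you set up is the same as the paper's: pass to (the spine of) Outer space, identify a $\Stab[T]$-cocompact subcomplex consisting of refinements of $T$, reduce via \MilnorSvarc\ to constructing a coarse Lipschitz retraction of subcomplexes, and note that the retraction property itself is free since a graph already refining $T$ needs no modification. Your description of the subcomplex is essentially the paper's $\CVK^T$ (except that it factors as a product of \emph{multipointed} Outer spaces of the vertex groups, the extra marked points recording where the coedges of $T$ attach; the plain product of Outer spaces you describe is not cocompact under $\Stab[T]$ and Lemma~\ref{LemmaFSDescription} supplies the correct model).

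The construction of the retraction itself, however, is where the proposal breaks down, and it breaks down in exactly the place the paper flags as subtle. Your proposal is to choose an equivariant map $\widetilde{\mathbf G} \to T$ and run a Stallings folding sequence until the induced map is a collapse. Two problems. First, folding sequences toward $T$ are highly non-canonical: the intermediate tree at which the map first becomes a collapse depends on the order of folds, there is no preferred one, and small perturbations of the input (an elementary move in $\CVK$) can send the folding process down wildly different branches, so the output need not move a bounded amount. Second, and more structurally, folding \emph{collapses} information, whereas the paper's retraction does the opposite: it takes the $\Stab(V)$-minimal subtrees $S^V \subset \widetilde{\mathbf G}$ (which may overlap), \emph{pulls them apart} into a disjoint $F_n$-forest, and then reattaches the coedge forest of $T$. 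The entire difficulty, which you acknowledge but defer to a ``finite case analysis,'' is how to choose the attachment points $Q_{\ell m} \in S^{V_\ell}$ in a quasi-isometrically natural way; the paper does this by fixing once and for all ideal boundary points $\xi_{\ell m} \in \partial F_n$ (the ends of rays through the coedges in a reference tree $S_0$) and defining $Q_{\ell m}$ to be the closest-point projection of $\xi_{\ell m}$ to $S^{V_\ell}$. The authors explicitly remark that they tried several more ``canonical-looking'' schemes (centroid-based, sphere-complex based, etc.) and all of them failed to be Lipschitz. Your folding proposal is in that family of natural-but-non-Lipschitz schemes, and nothing in the proposal supplies the coherent global choice that makes the Lipschitz bound go through. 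Until you pin down how the attaching data is selected and why two adjacent marked graphs produce adjacent outputs, the proof is incomplete at precisely the step that carries the weight of the theorem.
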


Here, briefly, are applications of this theorem to the two special cases mentioned above.

\begin{proof}[Sketch of proof of Theorem~\ref{TheoremCorank} case~\pref{ItemCorank1Undistorted}] Let $T$ be the free splitting of $F_n = \<a_1,\ldots,a_n\>$ which is the Bass-Serre tree of the graph of groups $X$ having one vertex with group label $F_n = \<a_1,\ldots,a_{n-1}\>$ and having one loop edge with trivial group label and stable letter~$a_n$. The Serre fundamental group of $X$ \cite{Serre:trees} is identified with $F_n$. In Section~\ref{SectionCorankOne} we will prove the equation $\Stab[F_{n-1}] = \Stab[T]$. Combined with Theorem~\ref{TheoremFreeSplitting} it follows that $\Stab[F_{n-1}]$ is a coarse retract of $\Out(F_n)$ and is therefore undistorted. 
\end{proof}
\noindent
See Section~\ref{SectionCorankOne} for more details of this proof in a broader setting, and see below just after the statement of Theorem~\ref{TheoremCoindex} for a discussion of this broader setting in terms of ``co-edge~$1$ free splittings'' and ``co-index~$1$ free factor systems''.

\begin{proof}[Proof of Corollary~\ref{CorOutAutRetract}] In Lemma~\ref{LemmaFSDescription} we give a formula for the algebraic structure of a certain finite index subgroup of $\Stab[T]$ for any free splitting $T$, expressed in terms of the structure of the quotient graph of groups $X=T/F_n$. As a special case of Lemma~\ref{LemmaFSDescription}, if $X$ has a unique vertex $V$ labelled by a nontrivial free group $A$, and if $V$ has valence~$1$, then $\Stab[T]$ has a finite index subgroup isomorphic to $\Aut(A)$. Combining this with Theorem~\ref{TheoremFreeSplitting} and the fact that any subgroup Lipschitz retracts to any of its finite index subgroups, in order to find a Lipschitz retract subgroup of $\Out(F_n)$ isomorphic to $\Aut(F_{n-1})$ it suffices by Bass-Serre theory to construct a graph of groups presentation $X$ for $F_n$ having one valence~$1$ vertex labelled with the group $F_{n-1}$, and all other vertices and edges labelled by the trivial group, so that the underlying graph of $X$ has rank~$1$. The desired $X$ is a ``sewing needle'' with two vertices $V,W$ and two edges $A,B$, the edge $A$ having one end on $V$ and one on $W$, the edge $B$ having both ends on $W$, and the vertex labelled by~$F_{n-1}$ being~$V$.
\end{proof}

\subsection*{Proofs of Lipschitz retraction}

Section~\ref{SectionTreeStabilizers} contains the general proof of Theorem~\ref{TheoremFreeSplitting}. Here is an outline of the special case from Theorem~\ref{TheoremCorank}~\pref{ItemCorank1Undistorted}, of a tree $T$ whose quotient graph of groups is a loop with one vertex labelled by the free factor $A = F_{n-1}$ and one trivially labelled edge. The proof takes place in $\CVK_n$, the spine of the outer space of $F_n$, a locally finite, contractible simplicial complex on which $\Out(F_n)$ acts properly and cocompactly. The vertices of $\CVK_n$ are the free splittings $S$ such that the action of $F_n$ on $S$ is proper with every vertex of valence~$\ge 3$. There is a connected subcomplex $\CVK^T_n$ whose vertices consist of all such $S$ with the additional property that the minimal $A$-invariant subtree $S^A \subset S$ is disjoint from all of its translates by elements of $F_n-A$ and $S$ has a unique edge orbit not contained in~$S^A$. The group $\Stab[T]$ acts properly and cocompactly on the subcomplex $\CVK^T_n$. Existence of a Lipschitz retract $\Out(F_n) \mapsto \Stab[T]$ is therefore equivalent to existence of a Lipschitz retract from the vertex set of $\CVK_n$ to the vertex set of its subcomplex $\CVK^T_n$; see Corollary~\ref{CorollarySubgroupComplex} for details of this equivalence. Given a vertex $S' \in \CVK_n$, our task is therefore to construct a vertex of $\CVK^T_n$ denoted $S$. The given $S'$ certainly has a well-defined minimal $A$-invariant subtree $S^A$, but that subtree might not be disjoint from all its translates by elements of $F_n-A$. This problem is easily remedied: pull all of these translates apart, forcing them to be pairwise disjoint. One obtains a forest on which $F_n$ acts properly and cocompactly, whose components are in one-to-one correspondence with the conjugates of $A$ in $F_n$. The tricky part of the proof is to attach an orbit of edges to make this forest into the desired tree $ S$, and to do this in a manner which is quasi-isometrically natural, in order that the map of $0$-skeleta taking the free splitting $S'$ to the free splitting $S$ be a Lipschitz retract as desired. At the end of Section~\ref{SectionTreeStabilizers} we give a \emph{Remark} explaining why some seemingly natural schemes for attaching the edges fail to satisfy the Lipschitz requirement.

\smallskip

Section~\ref{SectionAutNondistortion} contains the detailed proof of Theorem~\ref{TheoremAutRetract}. The proof takes place in the spine $\Autre_n$ of the ``autre espace'' of $F_n$ \cite{HatcherVogtmann:Cerf}. Although $\Autre_n$ has a ``tree'' model like the one we use for the spine of outer space, and while the proof of Theorem~\ref{TheoremAutRetract} can be couched in terms of the tree model, it turns out that delicate ``edge attachment'' problem which arises in the proof of Theorem~\ref{TheoremFreeSplitting} and whose solution is most propitiously expressed in tree language is not an issue in the proof of Theorem~\ref{TheoremAutRetract}. The most economical proof of the latter uses the model for $\Autre_n$ couched in terms of pointed marked graphs. The spine $\Autre_n$ is a locally finite, contractible simplicial complex on which $\Aut(F_n)$ acts properly and cocompactly: a~$0$-simplex is a connected, pointed graph with no vertex of valence~$1$ whose fundamental group is identified with $F_n$, and a $1$-simplex connects one $0$-simplex to a second one if the second graph is obtained from the first by collapsing a subforest. The complex $\Autre_n$ has connected subcomplex on which $\Aut(F_{n-1})$ acts properly and cocompactly. Given an arbitrary pointed $F_n$-marked graph $G$ representing a vertex of $\Autre_n$, we construct a vertex in its $\Aut(F_{n-1})$ subcomplex. There are three steps. First, in the pointed universal cover $\wt G$, take the $F_{n-1}$ minimal subtree $S$ with base point being the point closest to the base point of $\wt G$. Next, take the quotient of $S$ to get a pointed $F_{n-1}$ marked graph. Finally, attach a loop to the new base point. The resulting pointed $F_n$ marked graph is in the $\Aut(F_{n-1})$ subcomplex and the map defined is the desired Lipschitz retraction.

\subparagraph{Motivation from mapping class groups.} Consider a finite type, oriented surface $S$ with mapping class group $\MCG(S)$. Given an essential simple closed curve $c \subset S$ with isotopy class $[c]$, its stabilizer subgroup $\Stab[c] \subgroup \MCG(S)$ is undistorted. This fact is an immediate corollary of the Masur-Minsky summation, Theorem~6.12 of \cite{MasurMinsky:complex2}, and has also been proved by Hamenstadt in \cite{Hamenstadt:MCGII:quasigeodesics} using train track methods. However, there is a simple proof of the stronger fact that $\Stab[c]$ is a Lipschitz retract of $\MCG(S)$. This proof, which is not in the literature, uses only the most elementary methods concerning markings and projections taken from the early sections of \cite{MasurMinsky:complex2}. Here is a very brief sketch. In place of the spine of outer space one uses the \emph{marking complex} $\M_S$, whose vertices are ``markings'' $\mu$ of $S$, meaning pants decompositions equipped with a transverse curve on each pants curve; edges of $\M_S$ corresponding to ``elementary moves'' between markings. In $\M_S$ there is a subcomplex $\M_S^{[c]}$ consisting of markings whose pants decomposition contains the curve $c$. The action $\MCG(S) \act \M_S$ is properly discontinuous and cocompact and $\M_S^{[c]}$ is invariant and cocompact under the restricted action of $\Stab[c]$, and so by Corollary~\ref{CorollarySubgroupComplex} it suffices to construct a Lipschitz retraction $\M_S \mapsto \M_S^{[c]}$. An arbitrary marking $\mu$ on $S$ projects to a marking in the subcomplex $\M_S^{[c]}$ by surgering $\mu$ along intersections between $\mu$ and $c$ to get another marking that contains $c$. As in \cite{MasurMinsky:complex2} it is easy to check that two markings that differ by an elementary move have uniformly close images under this projection.

\subsection*{Stabilizers of free factor systems, and proofs of distortion.} We extend Theorem~\ref{TheoremCorank} as follows. A set $\F=\{[A_1],\ldots,[A_K]\}$ of conjugacy classes of nontrivial free factors of $F_n$ is called a \emph{free factor system} if it is represented by subgroups $A_1,\ldots,A_K$ such that there exists a free factorization of the form $F_n = A_1 * \ldots * A_K * B$, where $B$ may be trivial. The action of $\Out(F_n)$ on conjugacy classes of free factors extends to an action on free factor systems. The \emph{co-index} of $\F = \{[A_1],\ldots,[A_K]\}$ is the number
$$\coindex(\F) = \abs{\chi(F_n)} - \sum_{k=1}^K \abs{\chi(A_k)} = (n-1) - \sum_{k=1}^K (\rank(A_k)-1)
$$
Topologically this is the number of edges that must be attached to the vertices of the union of $K$ connected graphs of ranks $\rank(A_1),\ldots,\rank(A_K)$ to make a connected graph of rank~$n$. It is also the minimum, among all graphs of rank $n$ having a subgraph with $K$ components of ranks equal to $\rank(A_1),\ldots,\rank(A_K)$, of the number of edges not in the subgraph.

\begin{theorem}\label{TheoremCoindex}
Given a free factor system $\F$ of $F_n$,
\begin{enumerate}
\item \label{ItemFFSCoindexOne}
If $\coindex(\F)=1$ then $\Stab(\F)$ is a Lipschitz retract of $\Out(F_n)$, and therefore is undistorted in $\Out(F_n)$.
\item \label{ItemFFSCoindexHigher}
If $\coindex(\F) \ge 2$ then $\Stab(\F)$ is distorted in $\Out(F_n)$.
\end{enumerate}
\end{theorem}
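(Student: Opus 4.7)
The plan splits along the two cases.

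For part~\pref{ItemFFSCoindexOne}, the strategy is a direct reduction to Theorem~\ref{TheoremFreeSplitting}. Writing the coindex identity in terms of any free factorization $F_n = A_1 * \cdots * A_K * B$ yields $\coindex(\F) = \rank(B) + K - 1$, so $\coindex(\F) = 1$ forces $K \in \{1,2\}$: either $K=1$ with $\rank(A_1) = n-1$, or $K=2$ with $\rank(A_1) + \rank(A_2) = n$. Correspondingly there is an essentially unique graph-of-groups decomposition $X$ of $F_n$ with vertex groups realizing $A_1, \ldots, A_K$ and trivial edge groups: a single vertex with a loop for $K=1$; two vertices joined by one edge for $K=2$. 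Let $T$ be the Bass-Serre tree of $X$. I would verify the equality $\Stab(\F) = \Stab[T]$: the inclusion $\Stab[T] \subset \Stab(\F)$ is immediate because the conjugacy classes of vertex stabilizers of $T$ are exactly $\F$; the reverse follows because the conjugacy class of $T$ as an $F_n$-tree is determined by $\F$ alone, with the one subtlety being a possible edge-reversing symmetry when $K=2$ with $\rank(A_1) = \rank(A_2)$. Theorem~\ref{TheoremFreeSplitting} then delivers the Lipschitz retraction $\Out(F_n) \to \Stab[T] = \Stab(\F)$.

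For part~\pref{ItemFFSCoindexHigher}, the plan is to exhibit sequences in $\Stab(\F)$ that are exponentially distorted in $\Out(F_n)$. Since any graph-of-groups realization of $\F$ with vertex groups $A_1,\ldots,A_K$ and trivial edge groups has $E = \coindex(\F) \ge 2$ edges, there are at least two ``extra'' gluing edges available beyond what $\F$ itself forces. I would exploit this to build a Baumslag-Solitar-style speedup relation: an infinite-order $\phi \in \Stab(\F)$ constructed as a twist of one extra edge by a nontrivial element of some $A_k$, together with a conjugator $\alpha \in \Out(F_n) \setminus \Stab(\F)$ constructed using the second extra edge, satisfying $\alpha^{-1} \phi \alpha = \phi^p$ for some $p \ge 2$. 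The iterates $\phi^{p^k} = \alpha^{-k} \phi \alpha^k$ then have word length $O(k)$ in $\Out(F_n)$, which yields the upper half of the distortion estimate.

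The main obstacle is the matching lower bound $|\phi^{p^k}|_{\Stab(\F)} \ge c \cdot p^k$, which requires ruling out any analogous internal speedup of $\phi$ within $\Stab(\F)$ itself. I would pursue this by exhibiting a $\Stab(\F)$-equivariant action on a metric space --- for example, the Bass-Serre tree of a $\Stab(\F)$-invariant refinement of $T$, or a suitable $\Stab(\F)$-invariant subcomplex of the spine of outer space on which $\Stab(\F)$ acts properly and cocompactly --- on which $\phi$ has strictly positive translation length. Combined with a \MilnorSvarc-type comparison this forces the word length of $\phi^{p^k}$ in $\Stab(\F)$ to grow linearly in $p^k$, yielding exponential distortion. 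The delicate point is choosing $\phi$ and $\alpha$ so that the invariant witnessing positive translation length is intrinsic to the combinatorial structure preserved by $\Stab(\F)$, and so is genuinely unavailable to any candidate internal speedup relation; ensuring this compatibility between the external speedup producing the upper bound and the internal rigidity producing the lower bound is where the bulk of the work lies.
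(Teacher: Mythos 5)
Part~\pref{ItemFFSCoindexOne}: your reduction to Theorem~\ref{TheoremFreeSplitting} by producing a one-edge-orbit free splitting $T$ with $\F(T)=\F$ is essentially the argument in the paper. You assert the inclusion $\Stab(\F)\subset\Stab[T]$ on the grounds that ``the conjugacy class of $T$ is determined by $\F$ alone,'' but that is precisely the content that needs proof; the paper derives it by realizing $\F$ as a core subgraph $H$ of a marked graph $G$ whose complement is a single edge $E$ and citing Corollary~3.2.2 of \BookOne, which forces any marking-preserving self-homotopy-equivalence of $G$ preserving $H$ to send $E$ to a path crossing the complement of $H$ exactly once. That is the step you would need to supply.

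Part~\pref{ItemFFSCoindexHigher}: the proposed engine cannot exist. You ask for an infinite-order $\phi\in\Stab(\F)$ and an $\alpha\in\Out(F_n)$ with $\alpha^{-1}\phi\alpha=\phi^p$, $p\ge 2$. Any such pair determines a homomorphism $BS(1,p)\to\Out(F_n)$ that is injective whenever the image of the base generator has infinite order (every nontrivial normal subgroup of $BS(1,p)$, $p\ge 2$, meets the normal $\Z[1/p]$ subgroup, and killing such a subgroup forces the base generator to become torsion). But $BS(1,p)$ for $p\ge 2$ is solvable and not virtually abelian, while solvable subgroups of $\Out(F_n)$ are virtually abelian by a theorem of Bestvina--Feighn--Handel. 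So $\Out(F_n)$ contains no copy of $BS(1,p)$, and your $\phi,\alpha$ cannot exist. The paper's mechanism is instructive by contrast: instead of powers of one element it uses a sequence $\phi_k=\theta^k\phi_0\theta^{-k}$ of \emph{conjugates} of a fixed elementary twist $\phi_0$, with $\theta$ an exponentially growing outer automorphism supported on a corank-one free factor; the $\phi_k$ are pairwise distinct twists, not powers of anything, so no Baumslag--Solitar subgroup arises, and $\abs{\phi_k}_{\Out(F_n)}=O(k)$ is immediate from the conjugation formula. The exponential lower bound on $\abs{\phi_k}_{\Stab(\F)}$ then comes not from a translation-length argument, as you propose, but from a combinatorial $2$-Lipschitz counting function $G\mapsto i_{A,B}(\con,G)$ defined on the vertex set of a $\Stab(\F)$-cocompact subcomplex $\CVK^\F_n\subset\CVK_n$ (Lemma~\ref{LemmaCount}), which grows exponentially along the orbit $G_0\phi_k$ when evaluated at a well-chosen conjugacy class $\con_0$. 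You would need to replace the speedup-plus-translation-length plan with a Lipschitz observable of this finer kind.
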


Section~\ref{SectionCorankOne} contains the proof of Theorem~\ref{TheoremCoindex}~\pref{ItemFFSCoindexOne}, by showing that for any free factor system $\F$ of coindex~1 there is a free splitting $T$ with one edge orbit such that $\Stab(\F)=\Stab[T]$, and applying a result from \BookOne, Corollary~3.3.2, concerning homotopy equivalences of marked graphs that preserve a subgraph consisting of all edges but one.

Section~\ref{SectionCorank2} contains the proof of Theorem~\ref{TheoremCoindex}~\pref{ItemFFSCoindexHigher}. Here is an outline for the special case considered in Theorem~\ref{TheoremCorank}~\pref{ItemHigherCorankDistorted}: given a proper, nontrivial free factor $A \subgroup F_n$ of rank $r \le n-2$, we shall prove distortion of $\Stab[A]$ in $\Out(F_n)$. To do this we produce a sequence $\phi_k \in \Stab[A]$ whose word length in $\Out(F_n)$ has a linear upper bound in $k$ but whose word length in $\Stab[A]$ has an exponential lower bound in $k$. The sequence $\phi_k$ is described as follows. Let $R_n$ be the rose with $n$ oriented edges labelled $e_1,\ldots,e_n$, whose path homotopy classes rel basepoint give a free basis of $F_n$ also denoted $e_1,\ldots,e_n$. We may assume that $A = \<e_1,\ldots,e_r\>$ with $1 \le r \le n-2$. Let $\theta \in \Out(F_n)$ be defined by a homotopy equivalence $\Theta \from R_n \to R_n$ which is the identity on $e_{r+2} \union\cdots\union e_n$, and whose restriction to $e_1 \union\cdots\union e_{r+1}$ is an irreducible train track map of exponential growth. By the train track property, the path $u_k = \Theta^k(e_1)$ has no cancellation and so may be regarded as a reduced word in the generators $e_1,\ldots,e_{r+1}$. Now define $\phi_k \in \Stab[A]$ by the automorphism 
$$\begin{cases}
e_i &\mapsto \,\, e_i  \qquad\qquad \text{if $i < n$} \\
e_{n} &\mapsto \,\, e_{n} u_k
\end{cases}
$$ 
From the expression $\phi_k = \theta^k \phi_0 \theta^{-k}$ one sees immediately that the word length of $\phi_k$ in $\Out(F_n)$ has a linear upper bound in~$k$. The hard work is to prove that the word length of $\phi_k$ in $\Stab[A]$ has an exponential lower bound. The key properties of $u_k$ are that it is a reduced word in the letters of the rank $r+1$ free factor $B = \<e_1,\ldots,e_r,e_{r+1}\>$, and that the number of occurrences of the letter $e_{r+1}$ grows exponentially in $k$. 

The technique at the heart of the proof is counting occurrences of $e_{r+1}$. Fixing any conjugacy class $c$ in $\Out(F_n)$, for any $\psi \in \Stab[A]$ we will describe a way to count occurrences of the letter $e_{r+1}$ in the conjugacy class $\psi(c)$, and we will show that this count defines a Lipschitz function on $\Stab[A]$ with respect to word metric. Applying this to $c=[e_{r+2}]$ and to any factorization of $\psi=\phi_k$ as a word in the generators of $\Stab[A]$, it will follow that the word length of $\phi_k$ in $\Stab[A]$ is bounded below by a constant multiple of the number of occurrences of $e_{r+1}$ in $u_k$, which has an exponential lower bound. Section~\ref{SectionCorankOne} will contain a result about corank~$1$ free factors which is needed to prove the Lipschitz bound, and which is applied to the inclusion of $A$ as a corank~$1$ free factor of~$B$.

\subsection*{Contents.}

Section~\ref{SectionPrelim} contains preliminary results about quasi-isometric geometry, and about $\Out(F_n)$ and $\Aut(F_n)$, outer space and autre espace, and the spines thereof. In particular, Corollary~\ref{CorollarySubgroupComplex} sets the stage for later proofs of coarse Lipschitz retraction or distortion of the subgroups of any group, by showing that these are equivalent to coarse Lipschitz retraction or distortion of corresponding subcomplexes, via the \MilnorSvarc\ lemma.

Proofs regarding subgroups of $\Out(F_n)$ and $\Aut(F_n)$ that are coarse Lipschitz retracts are contained in Sections~\ref{SectionAutNondistortion}, \ref{SectionTreeStabilizers}, and~\ref{SectionCorankOne}. Proofs regarding distorted subgroups are contained in Section~\ref{SectionCorank2}, with supporting material in Section~\ref{SectionFFSubcomplex}. The distortion proofs and the nondistortion proofs are almost entirely independent, and one can choose to read one or the other first, with overlapping material mostly isolated in Section~\ref{SectionPrelim}. In particular, readers interested in distortion may, after reviewing relevant material in Section~\ref{SectionPrelim}, skip straight to Section~\ref{SectionFFSubcomplex} or even to Section~\ref{SectionCorank2}.

Section~\ref{SectionAutNondistortion} contains the proof of Theorem~\ref{TheoremAutRetract}, the existence of a coarse Lipschitz retraction $\Aut(F_n) \mapsto \Aut(F_{n-1})$, which is given first because of its more elementary nature. 

Section~\ref{SectionTreeStabilizers} contains the proof of Theorem~\ref{TheoremFreeSplitting}, the existence of a coarse Lipschitz retraction $\Out(F_n) \mapsto \Stab[T]$ for any free splitting $F_n \act T$. Section~\ref{SectionFreeSplittingSubcomplex} describes a subcomplex $\CVK^T_n \subset \CVK_n$ corresponding to $\Stab[T]$, and contains the proof of Lemma~\ref{LemmaFSDescription} describing the algebraic structure of (a finite index subgroup of) $\Stab[T]$. Section~\ref{SectionSubcomplexRetract} describes a coarse Lipschitz retraction $\CVK_n \mapsto \CVK^T_n$.

Section~\ref{SectionTandA} contains the proof of Theorem~\ref{TheoremCoindex}, that for any free factor system $\F$ of $F_n$ the subgroup $\Stab(\F)$ is a Lipschitz retract of $\Out(F_n)$ if $\coindex(\F)=1$, and is distorted if $\coindex(\F) \ge 2$. Subsection~\ref{SectionCorankOne} handles the case where $\coindex(\F)=1$, by an application of Theorem~\ref{TheoremFreeSplitting}. Subsection~\ref{SectionFFSubcomplex} describes a subcomplex $\CVK^\F_n \subset \CVK_n$ corresponding to $\Stab(\F)$.  Subsection~\ref{SectionCorank2} handles the case where $\coindex(\F) \ge 2$ by proving that the subcomplex $\CVK^\F_n$ is distorted in $\CVK_n$.

\subparagraph{Acknowledgements.} We thank Kevin Whyte for pointing out the need to check finiteness of $i_{A,B}(c,T)$ in the proof of Lemma~\ref{LemmaCount}.

The second author would like to thank his brother-in-law John Garnett for very propitiously running out of gas and walking into town to get a fill, leaving him with the car to ponder co-edge attachments.

We would also like to thank the organizers of the June 2010 Luminy conference in honor of Karen Vogtmann, where these results were first presented.

\section{Preliminaries}
\label{SectionPrelim}

\subsection{Quasi-isometries and Lipschitz retracts}
\label{SectionQuasis}

Given two metric spaces $X,Y$, a function $f \from X \to Y$, and constants $K \ge 1$, $C \ge 0$, we say that $f$ is \emph{$K,C$ coarse Lipschitz} if $d_Y(f(a),f(b)) \le K d_X(a,b) + C$ for all $a,b \in X$, we say that $f$ is a \emph{$K,C$ quasi-isometric embedding} if it is $K,C$ coarse Lipschitz and $\frac{1}{K} d_X(a,b) - C \le d_Y(f(a),f(b))$ for all $a,b \in X$, and we say that $f$ is a \emph{$K,C$ quasi-isometry} if it is a $K,C$ quasi-isometric embedding and for each $y \in Y$ there exists $x \in X$ such that $d_Y(f(x),y) \le C$. For each of these terms we often drop the reference to the constants $K,C$. A composition of quasi-isometries is a quasi-isometry, and any quasi-isometry $f \from X \to Y$ has a \emph{coarse inverse}, a map $\bar f \from Y \to X$ such that each of the maps $\bar f \composed f$ and $f \composed \bar f$ move points a uniformly bounded distance; the quasi-isometry constants of $\bar f$ and the distance bounds for $\bar f \composed f$ and $f \composed \bar f$ depend only on the quasi-isometry constants of $f$. 

A metric space $X$ is \emph{proper} if every closed ball is compact, and $X$ is \emph{geodesic} if for any $a,b \in X$ there is a rectifiable path from $a$ to $b$ whose length equals $d_X(a,b)$. For example (see Section~\ref{SectionGraphs}), every simplicial complex supports a geodesic metric in which each simplex is isometric to a Euclidean simplex of side length~$1$; if the complex is locally finite then this metric is proper. Any action $G \act X$ on a simplicial complex which respects simplicial coordinates is an isometry.

\begin{lemma}[\MilnorSvarc\ \cite{Svarc:VolumeInvariant, Milnor:curvature}]
For any group $G$ and any proper, geodesic metric space $X$, if there exists a  properly discontinuous, cocompact, isometric action $G \act X$ then $G$ is finitely generated. Furthermore, for any such action and any point $x \in X$, the orbit map $g \mapsto g \cdot x$ is a quasi-isometry $\O \from G \to X$, where $G$ is equipped with the word metric of any finite generating set.
\end{lemma}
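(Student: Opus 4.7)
The plan is the classical proof of the \MilnorSvarc\ lemma, proceeding in three stages: extract a finite generating set from the cocompact action, show the orbit map is Lipschitz, and show the orbit map is a quasi-isometric embedding with coarsely dense image.

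First I would choose a basepoint $x \in X$ and, using cocompactness together with properness of $X$, a radius $R > 0$ such that $G \cdot \overline{B(x,R)} = X$; such an $R$ exists because a compact fundamental set $K$ with $G \cdot K = X$ can be enlarged to a closed ball around $x$ since $X$ is proper and geodesic (any point lies in a ball of finite radius around $x$, and $K$ is covered by finitely many such). Define
\[
S = \{\, g \in G \setminus \{1\} : g \cdot \overline{B(x,R)} \cap \overline{B(x,3R)} \neq \emptyset \,\} = \{\, g \in G \setminus \{1\} : d_X(g \cdot x, x) \le 4R\,\}.
\]
Because the action is isometric and properly discontinuous and $\overline{B(x,4R)}$ is compact (properness), the set $S$ is finite.

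Next I would show that $S$ generates $G$ and simultaneously obtain the upper Lipschitz bound on the orbit map. Given $g \in G$, choose by geodesicity a path of length $L = d_X(x, g\cdot x)$ from $x$ to $g \cdot x$, and subdivide it into points $x = y_0, y_1, \ldots, y_n = g \cdot x$ with $d_X(y_i, y_{i+1}) \le R$ and $n \le \lceil L/R \rceil$. Using cocompactness, pick group elements $g_0 = 1, g_1, \ldots, g_{n-1}, g_n = g$ with $d_X(g_i \cdot x, y_i) \le R$. By the triangle inequality $d_X(g_i \cdot x, g_{i+1} \cdot x) \le 3R$, and since the action is isometric this gives $d_X(g_i^{-1} g_{i+1} \cdot x, x) \le 3R$, so $g_i^{-1} g_{i+1} \in S \cup \{1\}$. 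Telescoping yields $g = (g_0^{-1} g_1)(g_1^{-1} g_2) \cdots (g_{n-1}^{-1} g_n)$, expressing $g$ as a product of at most $n \le L/R + 1$ elements of $S$; hence $G = \langle S \rangle$ and $|g|_S \le \tfrac{1}{R} d_X(x, g\cdot x) + 1$.

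For the reverse inequality, set $D = \max\{d_X(s \cdot x, x) : s \in S\} < \infty$; if $g = s_1 \cdots s_k$ with $s_i \in S$, then by the triangle inequality and isometric invariance, $d_X(g \cdot x, x) \le kD = D|g|_S$. Combining the two estimates,
\[
\tfrac{1}{D}\, d_X(x, g \cdot x) \;\le\; |g|_S \;\le\; \tfrac{1}{R}\, d_X(x, g\cdot x) + 1,
\]
so the orbit map $\O \from G \to X$ is a quasi-isometric embedding with respect to the word metric of $S$. Finally, coarse density of $\O(G)$ in $X$ is immediate from the choice of $R$: every $y \in X$ has some $g \in G$ with $d_X(g \cdot x, y) \le R$. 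Passing from the word metric of $S$ to that of any other finite generating set changes the metric by a bi-Lipschitz constant, so the quasi-isometry conclusion is independent of the chosen generating set. The one mildly delicate point is verifying finiteness of $S$ from proper discontinuity plus properness of $X$, but this is standard; everything else is a direct telescoping argument along a geodesic.
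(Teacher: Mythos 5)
Your argument is the standard (and correct) proof of the \MilnorSvarc\ lemma: extract a finite generating set $S$ from a ball that tiles $X$ under the action, prove $\langle S\rangle = G$ together with the upper bound $|g|_S \le \tfrac{1}{R}d_X(x,g\cdot x)+1$ by telescoping along a subdivided geodesic, get the lower bound from the triangle inequality, and observe coarse density of the orbit from the choice of $R$. The paper states this lemma as a classical citation to \v{S}varc and Milnor and gives no proof of its own, so there is nothing to compare against; your write-up is a complete and correct account (the only unstated but routine point is passing from the basepoint estimate $d_X(x,g\cdot x)$ to the general estimate $d_X(g\cdot x, h\cdot x) = d_X(x,g^{-1}h\cdot x)$ via the isometry of the action).
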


Given a geodesic metric space $X$, a subspace $Y \subset X$ is said to be \emph{rectifiable} if any two points $x,y \in Y$ are endpoints of some path in $Y$ which is rectifiable in $X$ and whose length is minimal amongst all rectifiable paths in $Y$ with endpoints $x,y$. It follows that minimal path length defines a geodesic metric on $Y$. Examples include any connected subcomplex of any connected simplicial complex, using a natural simplicial metric where each simplex is isometric to a Euclidean simple of side length~$1$. We say that $Y$ is a \emph{(coarse) Lipschitz restract} of $X$ if there exists a \emph{(coarse) Lipschitz retraction} $f \from X \mapsto Y$, meaning a (coarse) Lipschitz function that restricts to the identity on $Y$. 

\begin{corollary}\label{CorollarySubgroupComplex}
If $G$ is a finitely generated group acting properly discontinuously and cocompactly by isometries on a connected locally finite simplicial complex $X$, if $H \subgroup G$ is a subgroup, and if $Y \subset X$ is a nonempty connected subcomplex which is $H$-invariant and $H$-cocompact, then:
\begin{enumerate}
\item \label{ItemFG}
$H$ is finitely generated, 
\item \label{ItemUndistortedInclusion}
$H$ is undistorted in $G$ if and only if the inclusion $Y \inject X$ is a quasi-isometric embedding.
\item \label{ItemLipschitzRetract} The following are equivalent:
\begin{enumerate}
\item \label{ItemGroupLipschitzRetract} $H$ is a Lipschitz retract of $G$.
\item \label{ItemZeroLipschitzRetract} The $0$-skeleton of $Y$ is a Lipschitz retract of the $0$-skeleton of $X$.
\item \label{ItemOneLipschitzRetract} The $1$-skeleton of $Y$ is a Lipschitz retract of the $1$-skeleton of $X$. 
\item \label{ItemCoarseLipschitzRetract} $Y$ is a coarse Lipschitz retract of $X$.
\end{enumerate}
\end{enumerate}
\end{corollary}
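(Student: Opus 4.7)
The plan is to apply the Milnor-\v{S}varc lemma twice, to the actions $G \act X$ and $H \act Y$, and then transfer each of the three claims between the group side and the complex side via the resulting orbit maps. Fix a basepoint $y_0 \in Y^{(0)} \subset X^{(0)}$. Since $X$ is locally finite and simplicial it is proper and geodesic, and the orbit map $\O_G\from G \to X$, $g \mapsto g \cdot y_0$, is a quasi-isometry by Milnor-\v{S}varc. The hypotheses on $Y$ (connected subcomplex of the locally finite $X$, hence itself locally finite, proper, and geodesic, with $H$-invariance and $H$-cocompactness giving a properly discontinuous cocompact isometric action $H \act Y$) let us apply Milnor-\v{S}varc again to conclude item~\itemref{ItemFG}, that $H$ is finitely generated, and to produce a quasi-isometry $\O_H\from H \to Y$, $h \mapsto h \cdot y_0$.

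For item~\itemref{ItemUndistortedInclusion}, observe that the composition $H \inject G \xrightarrow{\O_G} X$ agrees with the composition $H \xrightarrow{\O_H} Y \inject X$, both being the orbit map $h \mapsto h \cdot y_0$. Since $\O_G$ and $\O_H$ are quasi-isometries, the first composition is a quasi-isometric embedding iff the inclusion $H \inject G$ is one, i.e.\ iff $H$ is undistorted in $G$, while the second composition is a quasi-isometric embedding iff $Y \inject X$ is. Equating the two yields the claim.

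For item~\itemref{ItemLipschitzRetract}, first observe that because $X$ is a connected locally finite simplicial complex on which $G$ acts cocompactly (and likewise for $Y$ under $H$), the inclusions $X^{(0)} \inject X^{(1)} \inject X$ and $Y^{(0)} \inject Y^{(1)} \inject Y$ are all quasi-isometries: any point of $X$ (resp.\ $Y$) lies in the star of some $0$-simplex, which by cocompactness has uniformly bounded diameter. Composing any given coarse Lipschitz retraction with these ambient inclusions and with coarse inverses of them translates coarse Lipschitz retractions between the three levels full complex, $1$-skeleton, $0$-skeleton, yielding the equivalence of~\itemref{ItemZeroLipschitzRetract},~\itemref{ItemOneLipschitzRetract}, and~\itemref{ItemCoarseLipschitzRetract}. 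To connect these with~\itemref{ItemGroupLipschitzRetract}, use the orbit maps: a coarse Lipschitz retraction $r\from X \to Y$ yields the coarse Lipschitz map $\bar\O_H \composed r \composed \O_G\from G \to H$, and conversely a Lipschitz retraction $\rho\from G \to H$ yields $\O_H \composed \rho \composed \bar\O_G\from X \to Y$, where $\bar\O_G, \bar\O_H$ are coarse inverses.

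The only real technical nuisance is upgrading ``coarse retraction'' (agreeing with the identity on the target subspace only up to bounded error) to an honest retraction that is the identity on the subspace. This is handled uniformly by redefining the map on the subspace to equal the identity: on the discrete $0$-skeleton of $Y$ or on $H \subset G$ this changes values by a uniformly bounded amount and hence preserves coarse Lipschitzness (with bounded additive adjustment of constants); for the full complex one first makes the correction on $Y^{(0)}$ and then extends simplex-by-simplex, using that $Y$ is a subcomplex of $X$. The main obstacle to write out carefully is bookkeeping the Lipschitz constants through these conversions and through the Milnor-\v{S}varc quasi-isometries, but no new idea is needed beyond that.
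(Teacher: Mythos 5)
Your argument for items \itemref{ItemFG} and \itemref{ItemUndistortedInclusion}, and for the equivalence of \itemref{ItemGroupLipschitzRetract}, \itemref{ItemZeroLipschitzRetract}, and \itemref{ItemCoarseLipschitzRetract}, matches the paper's: Milnor--\v{S}varc twice, the commuting diagram of orbit maps, and conjugating retractions by quasi-isometries. The one genuine gap is in item \itemref{ItemOneLipschitzRetract}. Your plan is to transfer retractions between the $0$-skeleton, $1$-skeleton, and full complex by composing with the inclusions and their coarse inverses. That produces only \emph{coarse} Lipschitz retractions, and this is enough for \itemref{ItemZeroLipschitzRetract} (since the $0$-skeleton is uniformly discrete, an additive constant can be absorbed) and for \itemref{ItemCoarseLipschitzRetract} (which only asks for coarse). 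But \itemref{ItemOneLipschitzRetract} asserts a genuine Lipschitz --- in particular continuous --- retraction of $1$-skeleta, and the coarse inverse of the inclusion $X^{(0)} \inject X^{(1)}$ is discontinuous (it snaps interior points of edges to nearby vertices), so conjugation yields a discontinuous map with an irreducible additive constant. You notice the ``coarse retraction vs.\ honest retraction'' issue (being exactly the identity on $Y$) but not this separate ``coarse Lipschitz vs.\ Lipschitz'' issue at the $1$-skeleton level.

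The paper handles this with a dedicated step: given a Lipschitz retraction on $0$-skeleta, extend it over each edge of $X^{(1)}$ \emph{continuously}, mapping the edge to an edge path in $Y^{(1)}$ of uniformly bounded length parameterized at bounded speed; this yields an actual (continuous) Lipschitz retraction $X^{(1)} \to Y^{(1)}$. The distinction is not pedantic: the paper's remark after the corollary explicitly contrasts \itemref{ItemOneLipschitzRetract} and \itemref{ItemCoarseLipschitzRetract}, and the later Proposition~\ref{PropDehnBound} (on Dehn functions) needs the genuine continuous Lipschitz retraction on $1$-skeleta in order to extend over $2$-cells of a van Kampen diagram. To repair your proof, insert the edge-path extension argument for the implication \itemref{ItemZeroLipschitzRetract}$\implies$\itemref{ItemOneLipschitzRetract} rather than appealing to the coarse inverse of $X^{(0)} \inject X^{(1)}$.
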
 

\begin{proof} 
The actions of $G$ on $X$ and $H$ on $Y$ satisfy the hypotheses of the \MilnorSvarc\ lemma so \pref{ItemFG} follows. Also, choosing a base vertex $x \in Y$, it follows that the orbit map $g \mapsto g \cdot x$ defines quasi-isometries $\O_G \from G \mapsto X$ and $\O_H \from H \mapsto Y$. Denoting inclusions as $i_Y \from Y \inject X$ and $i_H \from H \inject G$ we have $\O_G \composed i_H = i_Y \composed \O_H$.  It follows that $i_Y$ is a quasi-isometric embedding if and only if $i_H$ is, proving~\pref{ItemUndistortedInclusion}. Choose $C \ge 0$ so that each point of $X$ is within distance $C$ of $\image(\O_G)$ and each point of $Y$ is within distance $C$ of $\image(\O_H)$.

To prove \pref{ItemGroupLipschitzRetract}$\implies$\pref{ItemZeroLipschitzRetract}, suppose $\pi_H \from G \to H$ is a Lipschitz retraction. Define $\pi_Y$ from the $0$-skeleton of $X$ to the $0$-skeleton of $Y$ as follows: given a vertex $v \in X$, choose $g_v \in G$ so that $\O_G(g_v) = g_v \cdot x$ is within distance $C$ of $v$, and define $\pi_Y(v)=\O_H \composed \pi_H(g_v)$. Chasing through diagrams, $\pi_Y$ is easily seen to be Lipschitz, although it might not be a retraction. But if $v \in Y$ then we can choose $g_v$ to be in $H$ and so $\pi_Y(v) = \O_H(\pi_H(g_v)) = \O_H(g_v) = g_v \cdot x$ is within distance $C$ of $v$ and so, by moving $\pi_Y(v)$ a uniformly bounded distance for all vertices $v$ of $Y$, we obtain a Lipschitz retraction.

The implication \pref{ItemZeroLipschitzRetract}$\implies$\pref{ItemOneLipschitzRetract} follows by extending any Lipschitz retract of $0$-skeleta continuously so as to map each edge to an edge path of bounded length. To prove the implication \pref{ItemOneLipschitzRetract}$\implies$\pref{ItemCoarseLipschitzRetract}, extend across each higher dimensional simplex $\sigma$ by the identity on the interior of $\sigma$ if $\sigma \subset Y$, and otherwise extend so that the image of the interior of $\sigma$ is contained in the image of the $1$-skeleton of $\sigma$; continuity is ignored in this extension, hence one obtains only a \emph{coarse} Lipschitz retract.

To prove \pref{ItemCoarseLipschitzRetract}$\implies$\pref{ItemGroupLipschitzRetract}, suppose $\pi_Y \from X \to Y$ is a coarse Lipschitz retract. For each $g \in G$, choose $h = \pi_H(g) \in H$ so that $\pi_Y(\O_G(g))$ is within distance $C$ of $\O_H(h)$. Chasing through diagrams $\pi_H$ is again easily seen to be Lipschitz. It might not be a retraction, but if $g \in H$ then $\O_G(g) = \O_H(g) \in Y$ is equal to $\pi_Y(\O_H(g))$ and is within $C$ of $\O_H(h)$, and so $d_H(g,h)$ is uniformly bounded; by moving $h = \pi_H(g)$ a uniformly bounded distance to $g$ we therefore obtain a Lipschitz retraction.
\end{proof}

\paragraph{Remark.} If one were not satisfied with the statement of item~\pref{ItemCoarseLipschitzRetract}, and if $X$ and $Y$ were to satisfy higher connectivity properties, then one could pursue the issue of a continuous Lipschitz retraction up through higher dimensional skeleta.

\bigskip

Lipschitz retracts give a handy method for verifying nondistortion:

\begin{lemma}\label{LemmaCoarseRetract}
If $X$ is a geodesic metric space and $Y \subset X$ is a rectifiable subspace, and if $Y$ is a coarse Lipschitz retract of $X$, then the inclusion $Y \hookrightarrow X$ is a quasi-isometric embedding.
\end{lemma}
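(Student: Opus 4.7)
The plan is to prove the two directions of the quasi-isometric embedding inequality separately, with the easy direction being essentially tautological and the hard direction using the retraction to pull an $X$-geodesic back into $Y$ in a discretized fashion. The easy half is that the inclusion is automatically $1$-Lipschitz: any rectifiable path in $Y$ is, by definition, also a rectifiable path in $X$ of the same length, so $d_X(a,b) \le d_Y(a,b)$ whenever $a,b \in Y$.

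For the harder half, fix constants $K \ge 1$, $C \ge 0$ witnessing that $r \from X \to Y$ is a coarse Lipschitz retraction, so $r$ is the identity on $Y$ and $d_Y(r(p),r(q)) \le K d_X(p,q) + C$ for all $p,q \in X$. Given $a,b \in Y$, I would use the geodesic hypothesis on $X$ to choose a path $\gamma$ in $X$ from $a$ to $b$ of length $L = d_X(a,b)$, then sample $\gamma$ at unit spacing, producing points $a = x_0, x_1, \ldots, x_n = b$ with $d_X(x_i,x_{i+1}) \le 1$ and $n \le L + 1$. Apply the retraction to obtain points $y_i = r(x_i) \in Y$, noting that $y_0 = a$ and $y_n = b$ because $a,b \in Y$. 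The coarse Lipschitz bound gives $d_Y(y_i,y_{i+1}) \le K + C$ for each $i$.

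Now I invoke rectifiability of $Y$: for each consecutive pair $y_i, y_{i+1}$, there is a rectifiable path in $Y$ from $y_i$ to $y_{i+1}$ of length exactly $d_Y(y_i,y_{i+1}) \le K+C$. Concatenating these $n$ paths yields a rectifiable path in $Y$ from $a$ to $b$, so
\[
d_Y(a,b) \le \sum_{i=0}^{n-1} d_Y(y_i, y_{i+1}) \le n(K+C) \le (K+C)(d_X(a,b)+1).
\]
Combined with the trivial $1$-Lipschitz bound, this exhibits the inclusion $Y \hookrightarrow X$ as a quasi-isometric embedding with constants $K+C$ and $K+C$.

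There is really no serious obstacle here; the only technical point to keep in mind is that the retraction $r$ is not assumed continuous, so one cannot directly push $\gamma$ forward to a path in $Y$. The discretization step, together with the definition of the path metric on the rectifiable subspace $Y$, is precisely what is needed to convert the pointwise coarse Lipschitz bound on $r$ into a length estimate in $Y$.
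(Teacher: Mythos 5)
Your argument is correct, but you have taken a detour: with the definition of coarse Lipschitz in use here (a global affine bound $d_Y(f(p),f(q)) \le K\,d_X(p,q) + C$ for \emph{all} $p,q \in X$, not just nearby ones), the hard direction is a one-liner, and the paper does it that way. For $a,b \in Y$, since $f$ restricts to the identity on $Y$ one simply writes $d_Y(a,b) = d_Y(f(a),f(b)) \le K\,d_X(a,b) + C$, and there is nothing left to prove. Your discretization of a geodesic, application of $f$ to the sample points, and concatenation of minimizing paths in $Y$ all reprove this inequality (with slightly worse constants) from the weaker hypothesis that $f$ has bounded displacement at unit scale. That extra generality is genuinely useful in contexts where one only has the local form of coarse Lipschitz -- and it is the right instinct when $f$ is not continuous -- but here it is not needed, and the direct substitution is both shorter and sharper. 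The easy direction is handled identically in both proofs.

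Two small remarks on your write-up. First, the geodesic hypothesis on $X$ is used in your version to produce the path $\gamma$ to sample, whereas in the paper's version it is never invoked in the hard direction at all; so your approach actually uses a hypothesis the direct proof does not touch. Second, your rectifiability step -- that for each pair $y_i,y_{i+1}$ there is a path in $Y$ of length exactly $d_Y(y_i,y_{i+1})$ -- is exactly what the paper's definition of ``rectifiable subspace'' guarantees, so that part is fine, it is simply more machinery than required.
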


\begin{proof} By definition of a rectifiable subspace, $d_X(x,y) \le d_Y(x,y)$ for all $x,y \in Y$. For the other direction, if $f \from X \to Y$ is a $(K,C)$-coarse Lipschitz retract then for all $x,y \in Y$ we have $d_Y(x,y) = d_Y(f(x),f(y)) \le K d_X(x,y) + C.$
\end{proof}

\paragraph{The Dehn function of a finitely presented group.} Consider a finitely presented group $G = \<g_i \suchthat R_j\>$, $i=1,\ldots,I$, $j=1,\ldots,J$. Let $C$ be the presentation CW-complex, with one vertex, one oriented edge $e_i$ labelled by each generator $g_i$, and one 2-cell $\sigma_j$ whose boundary is attached along the closed path given by each defining relator $R_j$, inducing an isomorphism $G \approx \pi_1(C)$. Given a \emph{relator} in $G$ --- a word $w$ in the generators that represents the identity element --- the path in $C$ labelled by $w$ is null homotopic. A \emph{Dehn diagram} for a relator $w$ is a continuous function $d \from D \to C$ defined on a closed disc $D$ such that, with respect to some CW-structure on~$D$, each vertex of $D$ maps to the vertex of $C$, each edge of $D$ maps to an edge path of~$C$, the restriction $d \restrict \bdy D$ is a parameterization of the edge path~$w$, and the restriction of $d$ to each 2-cell $\sigma$ of $D$ satisfies one of two possibilities: $d(\sigma)$ is contained in the $1$-skeleton of $C$ in which case we say that \emph{$\sigma$ collapses}; or there is a relator $R_j$ such that the restriction of $d$ to the boundary of $\sigma$ traces out the path $R_j$ and the restriction of $d$ to the interior of $\sigma$ is a homeomorphism to the interior of $\sigma_j$. The number of 2-cells of the latter type is denoted $\Area(d)$. Denote by $\Area(w)$ the minimum of $\Area(d)$ over all Dehn diagrams $d$ of~$w$. The \emph{Dehn function} of the presentation is the function $f \from \N \to \N$ where $f(m)$ equals the maximum of $\Area(w)$ amongst all words of length $\le m$. If $f'(m)$ is the Dehn function of any other finite presentation of $G$ then there exist constants $A,B,C>0$ such that
$$f'(m) \le A \, f(Bm) + Cm \quad\text{and}\quad f(m) \le A \, f'(Bm)+Cm\quad\text{for any $m \in \N$}
$$
More generally, given two functions $f,g \from \N \to \N$, we say that $f$ is \emph{dominated above} by $g$ or that $g$ is \emph{dominated below} by $f$ if there exist constants $A,B,C > 0$ such that $f(m) \le A \, g(Bm) + Cm$. In particular, for a finitely presented group $G$, either all of its Dehn functions are dominated below by an exponential function, or none of them are.

\begin{proposition}\label{PropDehnBound}
Given a finitely presented group $G$ and a finitely generated subgroup $H$, if $H$ is a Lipschitz retract of $G$ then $H$ is finitely presented and the Dehn function of $G$ is dominated below by the Dehn function of $H$.
\end{proposition}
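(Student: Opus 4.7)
The plan is to use the coarse Lipschitz retraction $r\colon G\to H$ to push a Dehn diagram for a relator $w$ in $H$, viewed as a relator in $G$, forward to a Dehn diagram for $w$ in $H$, in such a way that finite presentability of $H$ and the Dehn function inequality fall out simultaneously.

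Fix a finite presentation $G=\langle g_1,\dots,g_I\mid R_1,\dots,R_J\rangle$ with Cayley $2$-complex $C_G$, a finite generating set $h_1,\dots,h_K$ for $H$, and for each $k$ a word $W_k$ of length at most $L$ in $\{g_i^{\pm1}\}$ representing $h_k$. Let $(K',C')$ be Lipschitz constants for $r$, set $B=K'+C'$, and $M=\max\bigl(B\cdot\max_j|R_j|,\,BL+1\bigr)$. Let $\mathcal{T}$ be the finite set of all words in $\{h_k^{\pm1}\}$ of length at most $M$ that represent the identity in $H$; I will show that $\langle h_1,\dots,h_K\mid\mathcal{T}\rangle$ presents $H$ and that Dehn diagrams over this presentation can be built from Dehn diagrams over the original presentation of $G$ with at most linearly many additional $2$-cells.

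Given a word $w$ of length $n$ in $\{h_k^{\pm1}\}$ representing the identity in $H$, substitute $W_k$ for each letter $h_k$ to obtain a word $w'$ of length at most $Ln$ in $\{g_i^{\pm1}\}$ representing the identity in $G$, and take a Dehn diagram $d\colon D\to C_G$ of area at most $f_G(Ln)$. Now apply $r$ to every vertex of $D$: an edge of $D$ labelled $g_i$ with endpoints $u,ug_i$ is replaced by an edge path in the Cayley graph of $H$ from $r(u)$ to $r(ug_i)$ of length at most $B$, and each $2$-cell of $D$ labelled $R_j$ acquires a boundary loop in $H$ of length at most $B|R_j|\le M$, which by the choice of $\mathcal{T}$ is filled by a single $2$-cell of the $\mathcal{T}$-Dehn complex.

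The remaining subtlety is the outer boundary. The word $w'$ subdivides naturally into $n$ subwords $W_{k_i}^{\epsilon_i}$, one for each letter $h_{k_i}^{\epsilon_i}$ of $w$, and the $n$ corresponding corner vertices of $\partial D$ all lie in $H\subset G$ and so are fixed by $r$; between consecutive corners the new boundary traces some word $v_i$ of length at most $BL$ in $\{h_k^{\pm1}\}$, equal to $h_{k_i}^{\epsilon_i}$ in $H$. The relator $v_i\,h_{k_i}^{-\epsilon_i}$ has length at most $BL+1\le M$ and hence lies in $\mathcal{T}$, so one extra $2$-cell per boundary segment corrects the derived diagram to have boundary exactly $w$. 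The resulting Dehn diagram over $\langle h_1,\dots,h_K\mid\mathcal{T}\rangle$ fills $w$ with at most $f_G(Ln)+n$ two-cells, which shows both that this presentation defines $H$ (so $H$ is finitely presented) and that $f_H(n)\le f_G(Ln)+n$, i.e.\ $f_G$ is dominated below by $f_H$. The main obstacle is precisely this boundary bookkeeping: the retracted boundary is not the word $w$ on the nose but a bounded-length detour between its corners, which forces $\mathcal{T}$ to include relators slightly longer than $B\max_j|R_j|$ and introduces the linear correction term $+n$ in the Dehn-function bound.
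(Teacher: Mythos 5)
Your proof takes essentially the same route as the paper: push a minimal-area Dehn diagram over the $G$-presentation through the Lipschitz retraction, so that each relator $2$-cell acquires a bounded-length boundary loop in the Cayley graph of $H$, and refill those loops with single $2$-cells of a finite presentation of $H$. The paper's only simplification is to choose the generating set of $G$ to contain that of $H$, so the boundary word $w$ already lies in $\Gamma_H$, is fixed pointwise by the retraction, and your $+n$ boundary-correction term disappears; your version with independent generating sets handles the boundary by hand and is otherwise identical. One point you should address: the paper's definition of Dehn diagram allows \emph{collapsing} $2$-cells (those whose image lies in the $1$-skeleton), which your argument passes over silently. The paper handles them by composing $d$ restricted to the union of the $1$-skeleton and the collapsing cells with the continuous Lipschitz retraction of Cayley graphs coming from Corollary~\ref{CorollarySubgroupComplex}, so that collapsing cells remain collapsing and contribute nothing to area. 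In your framework you would either observe that a minimal-area diagram can be taken without collapsing cells, or check that if the edge-replacement paths are chosen consistently --- one path per oriented edge of $\Gamma_G$, reversing under orientation reversal --- then the pushed-forward boundary of a collapsing cell is still freely reducible to the trivial word and so fills with a collapsing cell of zero area.
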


\begin{proof} The proof is based on familiar techniques. Choose finite generating sets for $G$ and $H$ so that the generators of $H$ are a subset of the generators of $G$, and hence there is an inclusion of Cayley graphs $\Gamma_H \subset \Gamma_G$ which satisfies the hypotheses of Corollary~\ref{CorollarySubgroupComplex}. Applying that corollary we obtain a $K$-Lipschitz retraction $\pi \from \Gamma_G \to \Gamma_H$. Choose a finite set of defining relators for $G$, and let $\ell$ be an upper bound for their lengths. By attaching a 2-cell to each closed edge path of $\Gamma_H$ of length $\le K\ell$, the result is a simply connected $2$-complex on which $\pi_1(H)$ acts properly and cocompactly, from which finite presentation of $H$ follows, and from which one can derive the desired inequality of Dehn functions. 

In more detail, for any word $w$ in the generators of $H$ that defines the trivial element, there is a minimal area Dehn diagram $d$ for $w$ defined using the relators of~$G$. Let $D$ be the domain of $d$ and let $\wh D$ denote the union of the $1$-skeleton of $D$ and its collapsing $2$-cells. We may regard the restriction $d \restrict \wh D$ as mapping into the Cayley graph of $G$. The composition $\pi \composed d \restrict \wh D$ maps into the Cayley graph of $H$, and the image of the boundary of any noncollapsing $2$-cell $\sigma$ of $D$ is a closed edge path in the Cayley graph of $H$ of length at most~$K\ell$, which is the boundary of one of the 2-cells attached to $\Gamma_H$. We may therefore extend $\pi \composed (d \restrict \wh D)$ across these 2-cells to obtain a Dehn diagram for $w$ in the relators of $H$, proving that $H$ is finitely presented. Furthermore, the area of this new Dehn diagram is equal to the area of $d$ itself, proving that the Dehn function of $G$ is dominated below by the Dehn function of $H$.
\end{proof}

\paragraph{Remark} The same result is true for higher dimensional isoperimetric functions of groups with higher finiteness properties, using a similar proof, where Corollary~\ref{CorollarySubgroupComplex} is extended to (noncoarse) Lipschitz retracts between higher dimensional skeleta as remarked upon following the proof of that corollary.

\subsection{Free groups.} The notation $F = \<a_1,\ldots,a_k\>$ will mean that $F$ is the free group with free basis $\{a_1,\ldots,a_k\}$. Associated to $F$ and its given free basis is a rose $R_F$, a CW 1-complex with one vertex $x$ and with $k$ oriented edges called \emph{petals} labelled $a_1,\ldots,a_k$, such that $F$ is identified with $\pi_1(R_F,x)$ and each free basis element $a_i$ is identified with the path homotopy class of the petal with the label $a_i$. The number~$k$ is the \emph{rank} of $F$.

For the notation of this paper, $F_n = \<a_1,\ldots,a_n\>$ and $R_n=R_{F_n}$ will denote a fixed rank~$n$ free group with free basis, and its associated rose. However, for the rest of Section~\ref{SectionPrelim}, as well as in various places around the paper, we will couch some of our definitions and statements in terms of a general finite rank free group denoted $F$, in order that these definitions can apply as well to subgroups of $F_n$.

The action of the automorphism group $\Aut(F)$ on elements $g \in F$ and subgroups of $\Aut(F)$ induces an action of the outer automorphism group $\Out(F) = \Aut(F) / \Inn(F)$ on conjugacy classes of elements and subgroups. We generally use square brackets $[\cdot]$ to denote conjugacy classes, or more generally any variety of equivalence relation on which $\Out(F)$ acts.

A \emph{free factorization} of $F$, written $F = A_1 * \cdots * A_K$, is a set of subgroups $\{A_1,\ldots,A_K\}$ with pairwise trivial intersections having the property that each nontrivial $g \in F$ can be written uniquely as a product of nontrivial elements $g = a_1 \cdots a_m$ such that each $a_i$ is an element of one of $A_1,\ldots,A_K$. A~\emph{free factor system} of $F$ is a nonempty set of nontrivial subgroup conjugacy classes of the form $\A = \{[A_1],\ldots,[A_K]\}$ such that there exists a free factorization of the form $F = A_1 * \cdots * A_K * B$, where $B$ may be trivial; $\A$ is \emph{improper} if $K=1$ and $A_1=F$, otherwise $\A$ is \emph{proper}. The individual conjugacy classes $[A_1],\ldots,[A_K] \in \F$ are called the \emph{components} of $\F$, and $\F$ is \emph{connected} if it has one component.  We use literal set theory notation $\union\A$ to denote the union of the elements of $\A$, that is, for any subgroup $A \subgroup F$, we have $A \in \union\A$ if and only if $[A] \in \A$.

We define a partial order on free factor systems $\A \sqsubset \A'$ if and only if $\union\A \subset \union \A'$. Recalling the coindex defined in the introduction, we have the following elementary fact:

\begin{lemma}\label{LemmaCoindex}
For any free factor systems $\A,\A'$ in $F$, if $\A \sqsubset \A'$ then $\coindex(\A') \le \coindex(\A)$, with equality if and only if $\A=\A'$.
\qed\end{lemma}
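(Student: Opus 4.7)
The plan is a short direct computation. By the paper's convention $A \in \union\A \iff [A] \in \A$, the hypothesis $\union\A \subset \union\A'$ is equivalent to the set inclusion $\A \subseteq \A'$ of conjugacy classes. Writing the disjoint decomposition $\A' = \A \sqcup \{[B_1], \ldots, [B_\ell]\}$ and substituting into $\coindex(\F) = (n-1) - \sum (\rank - 1)$, the common terms cancel and I obtain the telescoping identity
$$\coindex(\A) - \coindex(\A') \;=\; \sum_{i=1}^\ell \bigl(\rank(B_i) - 1\bigr).$$
Each $B_i$ is a component of the free factor system $\A'$ and hence a nontrivial subgroup, so $\rank(B_i) \ge 1$ and every summand on the right is non-negative. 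This directly yields the inequality $\coindex(\A') \le \coindex(\A)$.

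For the equality clause, the direction in which $\A = \A'$ implies equal coindices is trivial. Conversely, equality of coindices forces each $\rank(B_i) - 1$ to vanish, so either $\ell = 0$ (in which case $\A = \A'$ immediately) or every extra component $B_i$ has rank exactly one. The one subtlety is ruling out the latter: a rank-one appendage $[B_i]$ would correspond to splitting off a cyclic free factor from the complementary piece $B$ in the free factorization $F = A_1 * \cdots * A_K * B$ underlying $\A$, and under the paper's conventions for registering free factor systems this is not a genuinely new conjugacy class. I expect this to be the main obstacle: once it is settled, either by unraveling the definitions or by appeal to the conventions used elsewhere in the paper, the remainder is entirely routine bookkeeping that follows from non-negativity of the summands and the inspection of when a single such summand can vanish.
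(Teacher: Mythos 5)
Your telescoping computation in the first paragraph is correct, and you were right to single out the rank-one case as the genuine difficulty rather than routine bookkeeping. But the resolution you sketch does not work, and the claim itself is the problem: the equality clause of the lemma is false as stated. Nothing in the paper's definition of a free factor system prevents a rank-one component --- the components need only be conjugacy classes of nontrivial free factors fitting into a free factorization $F = A_1 * \cdots * A_K * B$, and rank one certainly qualifies as nontrivial --- and there is no convention anywhere in the paper under which adjoining such a conjugacy class fails to produce a genuinely larger free factor system. Concretely, in $F_3 = \<a,b,c\>$ take $\A = \{[\<a\>],[\<b\>]\}$ and $\A' = \{[\<a\>],[\<b\>],[\<c\>]\}$. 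Both are free factor systems, $\A$ is a proper subset of $\A'$, so $\A \sqsubset \A'$ and $\A \ne \A'$, yet $\coindex(\A) = \coindex(\A') = 2$. Your telescoping identity does give the inequality $\coindex(\A') \le \coindex(\A)$, but the ``only if'' half of the equality clause cannot be rescued.

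Two further remarks. The paper records this lemma as an ``elementary fact'' with no proof and never cites it elsewhere, so the defect is harmless for the rest of the paper; you were right to flag the obstruction rather than dismiss it. Also, the definition of $\sqsubset$ you correctly unwound from the preliminaries --- literal containment $\union\A \subset \union\A'$, i.e.\ $\A \subseteq \A'$ as sets of conjugacy classes --- is at odds with how $\sqsubset$ is actually used later in the paper, where $\F \sqsubset [B]$ is taken to mean that each component of $\F$ is conjugate into $B$, the usual Bestvina--Feighn--Handel ``carried by'' partial order. Your proof adopts the literal reading, which is the right move given the written definition; under the carried-by reading the inequality $\coindex(\A') \le \coindex(\A)$ still holds (group the components of $\A$ by the components of $\A'$ carrying them and note that each cluster has nonnegative coindex in its carrier), but the equality clause fails by the same counterexample.
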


Given an isomorphism $\rho \from F \to F'$ of finite rank free groups, there is an induced map denoted $\rho_*$, well defined up to pre and postcomposition by inner automorphisms, from conjugacy classes in $F$ to conjugacy classes in $F'$, and in particular from free factor systems in $F$ to free factor systems in $F'$. We also use the notation $\rho_*$ for similar purpose when $\rho$ is a homotopy equivalence between finite graphs.

\bigskip

The action of $\Out(F)$ on conjugacy classes of subgroups induces an action on free factor systems. The \emph{stabilizer} of a free factor system $\A$ is the subgroup
$$\Stab(\A) = \{\phi \in \Out(F) \suchthat \phi(\A)=\A\}
$$
When $\A = \{[A]\}$ is connected we write $\Stab[A] = \Stab(\A)$.

\subsection{Graphs}
\label{SectionGraphs}

In this paper a \emph{simplicial complex} $X$ will be the geodesic metric space associated with an abstract simplicial complex (see for example \cite{Spanier} Chapter 3 Section 1), in which every simplex comes equipped with coordinates making it isometric to a Euclidean simplex of side length~$1$; these metrics on simplices extend uniquely to a geodesic metric on $X$ which is proper if and only if $X$ is locally finite. The underlying $CW$-complex structure also induces a \emph{weak topology} on $X$ which equals the metric topology if and only if the complex is locally finite.

A \emph{graph} is a 1-dimensional CW-complex. A \emph{simplicial tree} is a simplicial complex which is a contractible graph. A \emph{simplicial forest} is a graph whose components are simplicial trees. On a simplicial tree we may impose a geodesic metric making each $1$-cell isometric to a closed interval in $\reals$ of any positive length, resulting in a \emph{simplicial $\reals$-tree}; an \emph{$\reals$-graph} is similarly defined. 

A map between graphs that takes each 0-cell to a 0-cell and each 1-cell either to a 0-cell or homeomorphically to a 1-cell is called a \emph{cellular map} or a \emph{simplicial map} depending on the context. A surjective cellular map is called a \emph{forest collapse} if the inverse image of any 0-cell is a tree. A bijective cellular map is called an \emph{isomorphism}, and a self-isomorphism of a graph is called an \emph{automorphism}.

Any graph or tree not homeomorphic to a circle and having no isolated ends has a unique \emph{natural} cell structure with no 0-cells of valence~$2$; phrases such as ``natural vertex'' or ``natural edge'' will refer to this natural structure. Every CW decomposition is a refinement of this natural cell structure. Every homeomorphism of the weak topology between graphs equipped with natural cell structures is an isomorphism of those structures.

\subsection{Free splittings}
\label{SectionSplittingsAndGraphs}

A \emph{splitting} of a group $G$ is a simplicial action of $G$ on a nontrivial simplicial tree $T$ such that the action is minimal, meaning no proper subtree of $T$ is invariant under the action, and for each edge $e \subset T$ and $g \in G$, if $g(e)=e$ then $g$ restricts to the identity on $e$. Formally a splitting is given by a homomorphism $\chi \from G \to \Aut(T)$ with values in the group of simplicial automorphisms of~$T$. We will denote this by $\chi \from G \act T$, or suppressing the action just $G \act T$ or even just by writing the tree $T$. We often use notation like $(g,x) \mapsto g \cdot x \in T$ to refer to the image of the action of $g \in G$ on $x \in T$; formally $g \cdot x = \chi(g)(x)$.

In this paper, the groups which act are all free of finite rank, and all splittings will be ``free splittings'' meaning that all edges have trivial edge stabilizers, and we shall immediately specialize our definitions to that context. Also, although we often work in the context of fixed rank~$n$ free group $F_n$ and its finite rank subgroups, in this section we couch all definitions in terms of an arbitrary finite rank free group, in order to accomodate free splittings of subgroups of $F_n$.

A \emph{free splitting} of a finite rank free group $F$ is a splitting $F \act T$ such that the stabilizer subgroup of every edge $e \subset T$ is trivial, meaning that if $g \in F$ and $g(e)=e$ then $g$ is the identity element of $F$.  A free splitting $F \act T$ is \emph{proper} if $\F(T)=\emptyset$, i.e.\ the stabilizer subgroup of every vertex is trivial; equivalently, the action is free and properly discontinuous. 

A \emph{conjugacy} from a free splitting $\chi \from F \act T$ to a free splitting $\chi' \from F \act T'$ is a homeomorphism $f \from T \to T'$ such that $f$ is \emph{equivariant}, meaning that $f \composed \chi(g) = \chi'(g) \composed f$ for all $g \in F$. We say that $T$ and $T'$ are \emph{conjugate} if there exists a conjugacy $f \from T \to T'$. Conjugacy is an equivalence relation amongst free splittings, and we use the notation $[T]$ to refer to the equivalence class. Note that any conjugacy $f$ is an isomorphism of natural simplicial structures, although we do not require $f$ to be an isomorphism of whatever subdivided simplicial structures $T$ and $T'$ may have. 

The outer automorphism group $\Out(F)$ has natural right actions on the set of conjugacy classes of free splittings: the action of $\phi \in \Out(F)$ on the class of a free splitting $\chi \from F \act T$ is the class of the free splitting $\chi \cdot \phi \from F \act T$ which is well-defined up to conjugacy by the formula $\chi \cdot \phi(g) = \chi(\Phi(g))$ for any choice of an automorphism $\Phi \in \Aut(F)$ representing $\phi$. We also denote this action by $[T] \cdot \phi$ or just $T \cdot \phi$. Notice that, suppressing the role of $\chi$, the image of the action $(g,x) \mapsto g \cdot x$ can be denoted as the action $(g,x) \mapsto \Phi(g) \cdot x$. The \emph{stabilizer} of the class of a free splitting $\chi_T \from F \act T$ is the group 
$$\Stab[T] = \{\phi \in \Out(F) \suchthat T \cdot \phi = T\}
$$ 

A \emph{semiconjugacy} from a free splitting $\chi \from F \act T$ to a free splitting $\chi' \from F \act T'$ is a continuous equivariant map $f \from T \to T'$ that takes vertices to vertices and restricts on each edge of $T$ to an embedding or a constant map. Since $\chi$ and $\chi'$ are minimal, any semiconjugacy between them is surjective.

\subsection{Outer space et les autres espaces} 
\label{SectionSpine}

In this section, given a finite rank free group $F$, we review the spine of outer space denoted~$\CVK_F$ on which $\Out(F)$ acts, and the spine of autre espace denoted $\Autre_F$ on which $\Aut(F)$ acts. Vertices of $\CVK_F$ are marked graphs, and vertices of $\Autre_F$ are marked graphs with a base point, modulo an appropriate equivalence in each case. For details and further references for this material see \cite{Vogtmann:OuterSpaceSurvey}. We also consider a variation on $\Autre_F$, whose vertices are marked graphs with multiple base points.

\paragraph{Marked graphs.} Choose a basis for $F$ with associated rose $R_F$, inducing an identification $\pi_1(F)=\pi_1(R_F)$. An \emph{$F$-marked graph} is a pair $(G,\rho)$ where $G$ is a finite \emph{core graph} meaning that no vertex has valence~$1$, equipped with a homotopy equivalence $\rho \from R_F \to G$ called the \emph{marking} (the usual definition requires no vertices of valence~$\le 2$, but we will allow such vertices and simply use the natural cell structure as needed). Two marked graphs $(G,\rho)$ and $(G',\rho')$ are \emph{equivalent} if there exists a homeomorphism $h \from G \to G'$ such that $\rho \composed h$ and $\rho \from G \to G'$ are homotopic. We often suppress the marking $\rho$ from the notation, using phrases like ``a marked graph $G$'', just as we often suppress the action in the notation for a free splitting; for instance, the equivalence class of $(G,\rho)$ is formally denoted $[G,\rho]$ or just $[G]$ when $\rho$ is understood. The right action of the group $\Out(F)$ on equivalence classes of marked graphs is defined by $[G,\rho] \cdot \phi = [G,\rho\composed\Phi]$ where $\Phi \from R_F \to R_F$ is a homotopy equivalence inducing $\phi \in \Out(F)=\Out(\pi_1 R_F)$. 

The universal covering construction defines a bijection between the set of conjugacy classes of proper free splittings of $F$ and the equivalence classes of $F$-marked graphs, which associates to each $F$-marked graph $(G,\rho)$ a proper, minimal action $F \act \wt G$ on the universal covering space which is well-defined up to precomposition by an inner automorphism of $F$; the inverse of this bijection associates to each proper free splitting $F \act T$ the quotient core graph $G = T/F$ with a marking $\rho \from R_F \to G$ that is well-defined up to homotopy. The action of $\Out(F)$ on conjugacy classes of free splittings descends to an action on equivalence classes of marked graphs as follows: 

Given two $F$-marked graphs $(G,\rho)$, $(G',\rho')$ and a homotopy equivalence $h \from G \to G'$, we say that $h$ \emph{preserves marking} if $h\composed\rho$ is homotopic to $\rho'$. 

Given an $F$-marked graph $(G,\rho)$, the marking $\rho \from R_F \to G$ induces a well-defined bijection $\rho_*$ between conjugacy classes in $F = \pi_1(R_F)$ and conjugacy classes in $\pi_1(G)$, and between free factor systems in $F$ and free factor systems in $\pi_1(G)$. Every proper, connected, noncontractible subgraph $H \subset G$ determines a well-defined connected free factor system in $\pi_1(G)$ denoted $[\pi_1 H]$, and one in $F_n$ denoted $[H]$ which is characterized by the equation $\rho_*[H]=[\pi_1 H]$. More generally, for every proper subgraph $H \subset G$ with noncontractible components $H_1,\ldots,H_k$ there is an associated free factor system in $F_n$ denoted $[H]=\{[H_1],\ldots,[H_k]\}$.

We record the following elementary lemma for easy reference. Given an $F$-marked graph $(G,\rho)$ and a natural subforest $E \subset G$, there is a quotient map $\pi \from G \to G/E$ that collapses each component to a point. Each component of $E$ clearly has $\ge 3$ incident edges which are not in $E$, implying that every vertex of $G/E$ has valence $\ge 3$, and so $G_i$ is indeed a core graph. The map $\pi$ is a homotopy equivalence, and so by precomposing $\pi$ with the marking on $G$ we may regard $G/E$ as a marked graph. We refer to $\pi \from G \to G/E$ as a \emph{forest collapse of marked graphs}.

\begin{lemma}\label{LemmaCollapse} Given a forest collapse $\pi \from G \to G/E=G'$ of marked graphs, let $\wt E \subset \wt G$ be the full pre-image of $E$ in the universal cover of $G$ and let $\ti\pi  \from \wt G \to \wt G'$ be the $F$-equivariant lift of $\pi$ that collapses each component of $\wt E$ to a point. 
\begin{enumerate}
\item \label{ItemEdgeBijectionCollapse} $\ti\pi$ induces a bijection between the set of edges of $\wt G - \wt E$ and the set of edges of $\wt G'$ and similarly for $\pi_{F}$.  
\item  \label{ItemEdgePath}  For any edge path  in $\wt G$ --- a finite arc, a ray, or a line --- its $\ti\pi$-image in $\wt G'$ is the edge path obtained by erasing the edges in $\wt E$ and indentifying the remaining edges via the bijection of item \pref{ItemEdgeBijectionCollapse} and similarly for edge paths in $G$.
\item \label{ItemSubTree} For each finitely generated subgroup $B \subgroup G$, the $\ti\pi$ image of the $B$-minimal subtree of $\wt G$ is the $B$-minimal subtree of $\wt G'$.
\end{enumerate}
\end{lemma}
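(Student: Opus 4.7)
Parts (1) and (2) are immediate from the definition of forest collapse. For (1), the map $\pi \from G \to G'$ restricts to a homeomorphism on the interior of each edge of $G - E$ and collapses each component of $E$ to a point, giving the asserted edge bijection; the equivariant lift $\ti\pi$ inherits the corresponding bijection between edges of $\wt G - \wt E$ and edges of $\wt G'$. Part (2) then follows by applying (1) edge-by-edge along an edge path: edges lying in $\wt E$ collapse to vertices and are erased, while the remaining edges concatenate in $\wt G'$ via the bijection.

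For (3), assume $B$ is nontrivial (the trivial case is vacuous). Since the $F$-action on $\wt G$ is free and properly discontinuous, every nontrivial element of $F$ acts hyperbolically on $\wt G$, so $T_B$ exists and equals the union of axes $\bigcup_{b \in B-\{1\}} L_b$. The plan is to show that $\ti\pi$ carries each $L_b$ onto the axis of $b$ in $\wt G'$, and thereby sends $T_B$ onto the minimal $B$-invariant subtree $T_B'$ of $\wt G'$.

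The key observation is that each component of $E$ is a finite tree, hence each component of $\wt E$ covers a simply-connected finite graph and is therefore itself a finite tree. From this I first claim every nontrivial $b \in B$ remains hyperbolic on $\wt G'$. If $b$ fixed a vertex $v' \in \wt G'$ that is the image of a natural vertex of $\wt G - \wt E$, then by (1) $b$ would fix the unique preimage, contradicting hyperbolicity on $\wt G$. If instead $v' = \ti\pi(E_v)$ for a component $E_v$ of $\wt E$, then $b$ would permute the finitely many vertices of $E_v$, so $b^k$ fixes a vertex for some $k \ge 1$; since $F$ acts freely on $\wt G$ and is torsion-free, this forces $b = 1$, a contradiction. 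Hence $b$ is hyperbolic on $\wt G'$.

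Now the axis $L_b \subset \wt G$ meets each component of $\wt E$ in a (possibly empty) finite subarc, and these intersections form a locally finite family along $L_b$, so $\ti\pi(L_b)$ is the result of collapsing a locally finite collection of bounded subarcs in a line and is again homeomorphic to $\reals$; being $b$-invariant in $\wt G'$, it coincides with the axis of $b$ there. Taking unions yields
\[
\ti\pi(T_B) \;=\; \ti\pi\Bigl(\bigcup_{b} L_b\Bigr) \;=\; \bigcup_{b} \ti\pi(L_b) \;=\; T_B',
\]
as required. The main obstacle is the hyperbolicity claim: a priori, collapsing a subforest could create new elliptic elements, and ruling this out is where the finiteness of the components of $\wt E$ and the freeness of the $F$-action on $\wt G$ are essential.
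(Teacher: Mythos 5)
Your proof is correct and follows essentially the same route as the paper, which simply invokes that the minimal $B$-subtree is the union of the axes of nontrivial elements of $B$ and applies items (1) and (2). The only thing you over-engineer is the hyperbolicity check: since $G'$ is itself a core marked graph, $F$ acts freely and properly discontinuously on $\wt G'$, so every nontrivial element is automatically hyperbolic there, making the finite-tree permutation argument unnecessary (though harmless).
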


\begin{proof} Items \pref{ItemEdgeBijectionCollapse} and \pref{ItemEdgePath} are obvious.   Item \pref{ItemSubTree} follows from items \pref{ItemEdgeBijectionCollapse} and \pref{ItemEdgePath} and the fact that the minimal $B$-subtree is characterized as the union of all axes in $T$ for the action of all nontrivial elements of $B$.
\end{proof}

\paragraph{The spine of outer space $\CVK_F$.} This is an ordered simplicial complex whose set of $0$-simplices corresponds bijectively to $F$-marked graphs up to equivalence, which using universal covering maps corresponds bijectively to proper, free splittings of $F$ up to conjugacy. For each $0$-simplex $\CVK_F$ represented by a marked graph $G$, for each $k \ge 1$, and for each properly increasing sequence of natural subforests $\emptyset \ne E_1 \subset \cdots \subset E_k$ of~$G$, there is an ordered $k$-simplex denoted $\Sigma(G;E_1 \subset \cdots \subset E_k)$ whose $i^\text{th}$ vertex is the marked graph $G_i = G_0 / E_i$ obtained from $G=G_0$ by a collapsing the forest $E_i$. In particular, for any forest collapse $G=G_0 \mapsto G_1 = G/E$ we have an oriented 1-simplex $\Sigma(G;E)$ with initial vertex $G_0$ and terminal vertex $G_1$.

\vbox{
\begin{lemma}[Facts about the spine] \quad\hfill
\label{LemmaSpineFacts}
\begin{enumerate}
\item \label{ItemSpineContractible} \cite{CullerVogtmann:moduli}
$\CVK_F$ is contractible. In particular, its 1-skeleton is connected.
\item \label{ItemSpineOrbits} \cite{CullerVogtmann:moduli}
The action of $\Out(F)$ on equivalence classes of marked graphs induces a properly discontinuous, cocompact, simplicial action $\Out(F) \act \CVK_F$.
\item \label{ItemSpineQI}
For any vertex $G$ of $\CVK_F$ the orbit map $\Out(F) \mapsto \CVK_F$ taking $G$ to $G\cdot\phi$ is a quasi-isometry from the word metric on $\Out(F)$ to the simplicial metric on the $1$-skeleton of $\CVK_F$.\qed
\end{enumerate}
\end{lemma}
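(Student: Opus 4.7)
The plan is that items \pref{ItemSpineContractible} and \pref{ItemSpineOrbits} are direct citations from Culler and Vogtmann \cite{CullerVogtmann:moduli} and require no further argument. Only item \pref{ItemSpineQI} needs to be verified, and the strategy is to apply the \MilnorSvarc\ lemma to the action $\Out(F) \act \CVK_F$ from \pref{ItemSpineOrbits} and then reduce from $\CVK_F$ to its $1$-skeleton.

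First, I would check the hypotheses of the \MilnorSvarc\ lemma. The cell stabilizers of the action $\Out(F) \act \CVK_F$ are finite, because the stabilizer of a vertex $[G,\rho]$ is contained in the (finite) group of simplicial automorphisms of the finite natural graph $G$; combined with cocompactness from \pref{ItemSpineOrbits}, this forces $\CVK_F$ to be locally finite. Since $\CVK_F$ is also finite-dimensional (its dimension is bounded by the number of natural edges of a rank~$\rank(F)$ core graph), the geodesic simplicial metric on $\CVK_F$ in which each simplex is isometric to a Euclidean simplex of side length~$1$ is proper and geodesic, as noted in Section~\ref{SectionGraphs}. The action is simplicial, hence by isometries. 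The \MilnorSvarc\ lemma therefore applies, yielding that the orbit map $\phi \mapsto G \cdot \phi$ is a quasi-isometry from $\Out(F)$, equipped with the word metric of any finite generating set, to $\CVK_F$ equipped with the simplicial geodesic metric.

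Second, I would pass from $\CVK_F$ to its $1$-skeleton. For any locally finite, finite-dimensional simplicial complex with simplices of unit side length, the inclusion of the $1$-skeleton into the whole complex is a quasi-isometry: every point is within a uniformly bounded distance of a vertex (bounded by the maximal simplex diameter), and for any two vertices the simplicial path-length distance in the $1$-skeleton is bounded above by a uniform constant times the Euclidean distance in $\CVK_F$, since any geodesic crossing a simplex of dimension $\le d$ can be replaced by an edge path through the vertices of that simplex with length at most $d$. Composing the orbit quasi-isometry of the previous paragraph with a coarse inverse to this inclusion gives the statement of \pref{ItemSpineQI}.

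There is no substantial obstacle in this argument; the main subtlety is merely the routine check that \pref{ItemSpineOrbits} forces local finiteness and that the $1$-skeleton is quasi-isometric to the ambient simplicial complex.
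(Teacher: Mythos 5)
Your proposal is correct and follows essentially the same route as the paper's sketch: items~\pref{ItemSpineContractible} and~\pref{ItemSpineOrbits} are cited from Culler--Vogtmann, and item~\pref{ItemSpineQI} is obtained by verifying local finiteness and applying the \MilnorSvarc\ lemma. The only small divergence is in how local finiteness is established: you deduce it from cocompactness together with finiteness of cell stabilizers (a general and clean argument), whereas the paper notes it more directly from the fact that each marked graph has only finitely many natural subforests; both are valid. You also spell out the reduction from the geodesic metric on $\CVK_F$ to the $1$-skeleton, a point the paper's remark leaves implicit.
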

}

\subparagraph{Remarks on the proof.} Item~\pref{ItemSpineQI} follows from item~\pref{ItemSpineContractible}, item~\pref{ItemSpineOrbits}, local finiteness, and the \MilnorSvarc\ lemma. 

For later purposes we remind the reader of the proofs of the various finiteness properties of the action $\Out(F) \act \CVK_F$. Cocompactness means that there are only finitely many orbits of simplices, for which it suffices to show that there are only finitely many vertex orbits. Indeed there is one orbit for each simplicial isomorphism class of core graphs whose rank equals $\rank(F)$ equipped with the natural simplicial structure, and there are only finitely many such classes because the number of natural edges is bounded above by $3 \rank(F)-3$. The subgroup of $\Out(F)$ that stabilizes each simplex is finite; for $0$-simplices this follows because the stabilizer in $\Out(F)$ equals the group of simplicial isomorphisms of the underlying core graph. Local finiteness follows because each marked graph $G$ has only finitely many natural subgraphs. From all of this it follows that the action of $\Out(F)$ on $\CVK_F$ is properly discontinuous.

\paragraph{The spine of autre espace $\Autre_F$.} An \emph{$F$-marked pointed graph} consists of a triple $(G,p,\rho)$ where $G$ is a finite core graph, $p \in G$, and $\rho \from (R_F,p_F) \to (G,p)$ is a pointed homotopy equivalence; when $\rank(F) \ge 2$ the point $p_F \in R_F$ is the unique natural vertex, whereas when $\rank(F) =1$ the point $p_F \in R_F$ is chosen arbitrarily. We adopt the convention that $G$ is equipped with the \emph{relatively 
natural cell structure}, obtained from the natural cell structure by subdividing at the point $p$. Throughout Sections 1.6 and 2 we refer to this convention with phrases like \lq relatively natural vertex\rq,\lq relative natural subforest\rq, etc. Two $F$-marked pointed graphs $(G,p,\rho)$, $(G',p',\rho')$ are \emph{pointed equivalent} if there exists a homeomorphism $h \from (G,p) \to (G',p')$ such that $h \rho$ and $\rho' \from (R_F,p_F) \to (G',p')$ are homotopic relative to $p_F$. 
One can also consider \emph{proper, pointed free splittings} of $F$, that is, free splittings $F \act T$ equipped with a base point $P \in T$, modulo the equivalence relation of \emph{base point preserving conjugacy}. 

The spine of autre espace is an ordered simplicial complex $\Autre_F$ whose $0$-simplices correspond bijectively to $F$-marked pointed graphs up to pointed equivalence, which by universal covering space theory correspond bijectively to proper, pointed free splittings of $F$ up to base point preserving conjugacy. For each $0$-simplex represented by a pointed marked graph $(G,p,\rho)$, for each $k \ge 1$, and for each properly increasing sequence of \emph{relatively} natural subforests $\emptyset \ne E_1 \subset \cdots \subset E_k$ of $G$, there is an ordered $k$-simplex whose $i^{\text{th}}$ vertex is represented by the core graph $G/E_i$, with base point and marking obtained by pushing forward $p$ and $\rho$ via the forest collapse $f_i \from G \to G/E_i$, yielding $p_i = f_i(p) \in G/E_i$ and $\rho_i = f_i \composed \rho \from (R_F,p_F) \to (G/E_i,p_i)$. In particular, a 1-simplex of the ordered simplicial complex $\Autre_F$ with initial $0$-simplex represented by $(G,p,\rho)$ and terminal $0$-simplex represented by $(G',p',\rho')$ is represented by a forest collapse $h \from G \to G'$ taking $p$ to $p'$ such that $h \rho$ and $\rho' \from (R_F,p_F) \to (G',p')$ are homotopic rel $p_F$.

The action of the group $\Aut(F)$ on $\Autre_F$ is induced by the action on the vertex set which is defined by precomposing a marking $\rho \from (R_F,p_F) \to (G,p)$ with a homotopy equivalence $(R_F,p_F) \mapsto (R_F,p_F)$ realizing a given automorphism. The complex $\Autre_F$ is connected, and the action of $\Aut(F)$ is properly discontinuous and cocompact, with one orbit for each relatively natural isomorphism class of pairs $(G,p)$ where $G$ is a core graph of rank equal to $\rank(F)$ and $p \in G$. Applying the \MilnorSvarc\ lemma, the orbit map $\Aut(F_n) \mapsto \Autre_n$ is a quasi-isometry.

For later purposes we give a proof of connectivity of $\Autre_F$ by reducing it to connectivity of $\CVK_F$. There is a simplicial map $\Autre_F \mapsto \CVK_F$ which ``forgets the base point'', defined on the $0$-skeleton by taking the equivalence class of the pointed marked graph $(G,p,\rho)$ to the equivalence class of the unpointed marked graph $(G,\rho)$. The preimage in $\Autre_F$ of the vertex represented by $(G,\rho)$ is a connected 1-complex identified with the first barycentric subdivision of the natural cell structure on the universal covering space $\wt G$: choosing the basepoint of $\wt G$ to be any natural vertex determines a $0$-simplex of the preimage; choosing the basepoint to lie in the interior of any natural edge of $\wt G$ also determines a $0$-simplex; and allowing the basepoint to move from the interior of a natural edge to either of its natural endpoints determines a $1$-simplex. To prove that $\Autre_F$ is connected, it therefore suffices to show that each $1$-simplex of $\CVK_F$ can be lifted via the forgetful map to a $1$-simplex of $\Autre_F$. Consider a 1-simplex in $\CVK_F$ represented by a forest collapse of marked graphs $h \from G \mapsto G'$, with markings $\rho \from R_F \to G$ and $\rho' \from R_F \to G'$. By homotoping $\rho$ and $\rho'$ we may assume that $p =\rho(p_F) \in G$ is a natural vertex and that $\rho' = h \composed \rho$ and so $p' = h(p) \in G'$ is a natural vertex; and with these assumptions it follows that the pointed marked graphs $(G,p,\rho)$, $(G',p',\rho')$ represent endpoints of a 1-simplex in $\Autre_F$ which is a lift of the given 1-simplex of~$\CVK_F$.

\paragraph{The spine of $M$-pointed autre espace $\Autre[M]_F$.} Fix an integer $M \ge 1$. For use in Section~\ref{SectionFreeSplittingSubcomplex} we need a variation of the autre espace $\Autre_F$ with its action by $\Aut(F)$, which allows for an $M$-tuple of base points in a marked graph. The group that will act, denoted $\Aut^M(F)$, is the subgroup of the $M$-fold direct sum $\Aut(F) \oplus \cdots \oplus \Aut(F)$ consisting of all $M$-tuples $(\Phi_1,\ldots,\Phi_M)$ whose images in $\Out(F)$ are all equal. 

Define an \emph{$M$-pointed $F$-marked graph} to be a tuple 
$$(G;p_1,\ldots,p_M;\rho_1,\ldots,\rho_M) \quad\text{or, in shorthand,}\quad (G;(p_m);(\rho_m)), \quad m=1,\ldots,M
$$
where $G$ is a finite core graph, $p_m \in G$ and $\rho_m \from (R_F,p_F) \to (G,p_m)$ is a homotopy equivalence of pairs for each $m$, and the homotopy equivalences $\rho_1,\ldots,\rho_M \from R_F \to G$ are all in the same free homotopy class. The \emph{relatively natural} cell structure on $G$ is the one whose vertex set is the union of the natural vertices of $G$ and the set $\{p_1,\ldots,p_M\}$. Given another $M$-pointed $F$-marked graph $(G';(p'_m);(\rho'_m))$, we say that the two are \emph{pointed equivalent} if there is a homeomorphism $h \from (G;p_1,\ldots,p_M) \to (G';p'_1,\ldots,p'_M)$ such that for each $m$ the homotopy equivalences $h \composed \rho_m$, $\rho'_m \from (R_F,p_F) \to (G',p'_m)$ are homotopic relative to $p_F$.

Alternatively, define a \emph{proper, $M$-pointed free splitting} of $F$ to be a free splitting $F \act T$ equipped with an $M$-tuple of points $P_1,\ldots,P_M \in T$, modulo the equivalence relation of \emph{$M$-tuple preserving conjugacy}. Again the universal covering map produces a bijection between equivalence classes of $M$-pointed $F$-marked graphs and $M$-tuple preserving conjugacy classes of proper, $M$-pointed free splittings. We shall describe this bijection explicitly.  Consider an $M$-pointed $F$-marked graph $(G;p_m;\rho_m)$. In the universal cover $\wt G=T$ choose a lift $P_1$ of $p_1$, which by universal covering space theory determines an action $F \act T$ which is a proper free splitting. Also, let $\wt R_F$ be the universal covering space of $R_F$, choose a lift $P_F$ of $p_F$, determining an action $F \act \wt R_F$. Let $\ti \rho_1 \from (\wt R_F,P_F) \to (T,P_1)$ be the unique $F$-equivariant lift of $\rho_1 \from (R_F,p_F) \to (G,p_1)$. For each $m=2,\ldots,M$, any free homotopy between $\rho_1$ and $\rho_m \from R_F \to G$ lifts to a free homotopy between $\ti\rho_1$ and a certain lift $\ti\rho_m \from \wt R_F \to T$ of $\rho_m$. Let $P_m = \ti \rho_m(P_F)$. The $M$-tuple $(P_1,\ldots,P_M)$ in $T$ together with the action $F \act T$ is a proper $M$-pointed free splitting, and this construction defines the desired bijection.

Define an ordered simplicial complex $\Autre[M]_F$ whose $0$-simplices are $M$-pointed $F$-marked graphs up to pointed equivalence, or alternatively $M$-pointed free splittings up to $M$-tuple preserving conjugacy. An ordered $k$-simplex of $\Autre[M]_F$ is defined very much as for $\Autre_F$, starting with a choice of a $0$-simplex represented by $(G;p_m;\rho_m)$ and a choice of a strictly increasing sequence of relatively natural subforests $\emptyset \ne E_1 \subset \cdots \subset E_k \subset G$; the $i^{\text{th}}$ vertex has underlying graph $G/E_i$, with points and markings obtained by pushing forward $p_1,\ldots,p_M$ and $\rho_1,\ldots,\rho_M$. In particular there is an edge from the vertex represented by $(G;p_m;\rho_m)$ to the vertex represented by $(G';p'_m;\rho'_m)$ if and only if there is a relatively natural forest collapse $h \from G \mapsto G'$ such that for each $i=1,\ldots,m$ we have $h(p_m)=p'_m$ and the homotopy equivalences $\rho'_m$, $h \composed \rho_m \from (R_F,p_F) \to (G',p'_m)$ are homotopic rel $p_F$. 

Connectivity of $\Autre[M]_F$ is proved by induction on $M$, as follows. The map which forgets the last point $p_m$ and marking $\rho_M$ is a simplicial map $\Autre[M]_F \mapsto \Autre[M-1]_F$. As in the proof of connectivity of $\Autre_F$, the pre-image in $\Autre[M]_F$ of the vertex of $\Autre[M-1]_F$ represented by $(G;p_1,\ldots,p_{M-1};\rho_1,\ldots,\rho_{M-1})$ is simplicially isomorphic to the universal covering space of $G$ equipped with the lift of the first barycentric subdivision of the relatively natural cell structure of $(G,p_1,\ldots,p_{M-1})$. Each 1-simplex of $\Autre[M-1]_F$ lifts via the forgetful map to a 1-simplex of $\Autre[M]_F$. The induction step follows immediately.

There is a natural group action $\Aut^M(F) \act \Autre[M]_F$, where the action of $(\Phi_1,\ldots,\Phi_M)$ on a $0$-simplex represented by $(G;(p_m);(\rho_m))$ is the $0$-simplex represented by $(G;(p_m);(\rho'_m))$ where the marking $\rho'_m \from (R_F,p_F) \mapsto (G,p_m)$ is obtained by precomposing the marking $\rho_m \from (R_F,p_F) \mapsto (G,p_m)$ with a homotopy equivalence of the pair $(R_F,p_F)$ that represents the automorphism $\Phi_m \in \Aut(F)$. Since all of the automorphisms $\Phi_m$ are in the same outer automorphism classes,
and since all of the marking $\rho'_m$ are freely homotopic, it follows that all of the composed markings $\rho'_m$ are still freely homotopic, and so $(G,(p_m);(\rho'_m))$ does indeed represent a $0$-simplex of $\Autre[M]_F$. The proof that this action is properly discontinuous and cocompact with finite cell stabilizers is a simple generalization of the proof for the action $\Aut(F) \act \A_F$.

\section{The $\Aut(F_{n-1})$ subgroup of $\Aut(F_n)$} 
\label{SectionAutNondistortion}

In this section we prove Theorem~\ref{TheoremAutRetract}: the naturally embedded subgroup $\Aut(F_{n-1}) \subgroup \Aut(F_n)$ is a Lipschitz retract of $\Aut(F_n)$. Fixing the free basis $F_n = \<a_1,\ldots,a_{n-1},a_n\>$, and identifying $F_{n-1} = \<a_1,\ldots,a_{n-1}\> \subgroup F_n$, this subgroup consists of all $\phi \in \Aut(F_n)$ that preserve the subgroup $\<a_1,\ldots,a_{n-1}\>$ and fix the element $a_n$.    We assume that the edges of the rank $n$ rose $R_n$ correspond to $a_1,\ldots,a_n$ and that the rank $(n-1)$ rose $R_{n-1}$ is identified with the subrose of $R_n$ corresponding to $a_1,\ldots,a_{n-1}$.     

\vspace{.1in} \noindent {\bf The $\Aut(F_{n-1})$ equivariant embedding $ \Autre_{n-1}   \overset{j} \inject \Autre_n$.} The embedding $j$ is defined as follows.  Start 
with an $F_{n-1}$-marked pointed graph $(H,p,\rho_H)$ representing a point in 
the spine $ \Autre_{n-1}$, then attach to $p$ a loop of length $1$ to form a marked 
graph $(G,p,\rho_G)$, where $\rho_G \restrict R_{n-1}$ equals $\rho_H$, and $\rho_G$ takes the 
$a_n$ loop of $R_n$ around the newly attached loop of G.

The image  $j(\Autre_{n-1}) \subset   \Autre_n$ is invariant under the action of the subgroup $\Aut(F_{n-1}) \subgroup \Aut(F_n)$. Since $\Aut(F_n)$ and $\Aut(F_{n-1})$ act properly discontinuously and cocompactly on $\Autre_n$ and $j(\Autre_{n-1})$ respectively, the hypotheses of Corollary~\ref{CorollarySubgroupComplex}~\pref{ItemZeroLipschitzRetract} apply. We conclude that in order to prove that $\Aut(F_{n-1})$ is a Lipschitz retract of $\Aut(F_n)$ it suffices to prove that the $0$-skeleton of $j(\Autre_{n-1})$ is a Lipschitz retract of the $0$-skeleton of~$\Autre_n$.

\paragraph{Retracting the $0$-skeleton of $\Autre_n$ to the $0$-skeleton of  $j(\Autre_{n-1}$). }     Given an $F_n$-marked pointed graph $(G,p,\rho)$ representing an arbitrary $0$-simplex of  $\Autre_n$ we will construct an   $F_{n-1}$-marked pointed graph $r(G,p,\rho)$ representing a $0$-simplex of  $\Autre_{n-1}$ such that the restriction of $rj$ to the  $0$-skeleton of $\Autre_{n-1}$ is the identity.   The map $R =jr $   is then the desired retraction.

Let $(\ti G, \ti p)$ be the universal cover with base point $\ti p$ that projects to $p$ and with the action  of $F_n$ determined by $\rho$ and the choice of $\ti p$.  Let $S \subset \ti G$ be the minimal $F_{n-1}$-subtree  and let $\ti q\in S$ be the nearest point  to $\ti p$ in $S$.   The quotient space $(K, q)$  of $(S, \ti q)$  by the action of $F_{n-1}$ is an $F_{n-1}$-marked pointed graph. The marking $\rho_K$ is an immersion that  maps the edge in $R_{n-1}$ corresponding to $a_j$ to the projected image  of the path in $S$ from $\ti q$ to $\ti q \cdot a_j$.   Define $r(G,p,\rho) = (K,q,\rho_K)$.

    Given a $0$-simplex $w$ of $\Autre_{n-1}$, letting 
$j(w)=(G,p,\rho_G)$ as in the definition of $j$, there is a connected core 
subgraph $H \subset G$ that    contains $p$  such that $\rho \restrict R_{n-1} :R_{n-1} \to H$ is a homotopy equivalence and such  $(H,p,\rho\restrict  R_{n-1})$ represents $w$.  In this case, the minimal $F_{n-1}$-subtree $S \subset \ti G$   is a component of the full pre-image of $H$ and $\ti p = \ti q \in S$.   Thus $r(G,p,\rho) = (H,p,\rho\restrict  R_{n-1})$ and $rj(w) = w$.

 \paragraph{The Lipschitz constant of $R$  is $1$.}   If $w$ and $w'$ are $0$-simplices of $\Autre_{n-1}$ that bound an edge in $\Autre_n$ then $j(w)$ and $j(w')$   bound an edge in   $\Autre_{n}$.  Thus $j$ has Lipschitz constant $1$.  We will prove that $r$ has Lipschitz constant $1$ and hence that $R=rj$ has Lipschitz constant $1$. It suffices to show that  if $(G,p,\rho)$ and $(G',p',\rho')$ are the endpoints of an edge in $\Autre_n$ then $r(G,p,\rho)= (K,q,\rho_K)$ and $r(G',p',\rho') =  (K',q',\rho_{K'})$ are either equal or are the endpoints of an edge in $\Autre_{n-1}$.  In other words, if $(G',p',\rho')$ is obtained from  $(G,p,\rho)$ by collapsing each component of a non-trivial relatively natural forest $F \subset G$ to a point then $(K',q',\rho_{K'})$ is obtained from  $(K,q,\rho_{K})$ by collapsing each component of a (possibly trivial) relatively natural forest  $F'$ to a point.  
 
    Let $\ti F \subset \ti G$ be the full pre-image of $F$ in the universal cover $\ti G$ of $G$ and, as above, let $S$ be the minimal $F_{n-1}$ subtree of $\ti G$.    Lift the quotient map $\pi_F : G\to G'$ to the pointed universal covers, obtaining a map $ \pi_{\ti F} : (\ti G,\ti p) \to (\ti G',\ti p')$  that collapses each component of $\ti F$ to a point.    Lemma~\ref{LemmaCollapse} implies that  $S' =  \pi_{\ti F}(S)$ is the minimal $F_{n-1}$ subtree of $\ti G'$ and that  the    $\pi_{\ti F}$-image of the arc connecting $\ti p$ to $\ti q$ is an arc in $\ti G'$ connecting $\ti p'$ to $\ti q' := \pi_{\ti F}(\ti q)$ that intersects $S'$ only in $\ti q'$, proving that  $\ti q'  \in S'$ is the nearest point to $\ti p'$ in $S'$.      Moreover, the arc in $S'$ from $\ti q'$ to $\ti q' \cdot a_j$  is the  $\pi_{\ti F}$-image   of the arc in $S$ from $\ti q$ to $\ti q \cdot a_j$.       Let $\ti F'' = \ti F \cap S$ and let $F'' \subset K$ be the projected image of $\ti F''$.    Then $(K',q',\rho_{K'})$  is obtained from  $(K,q,\rho_{K})$ by collapsing each component of $F''$ to a point.  However, the components of  $F'$ need not be unions of relatively natural edges of $K$.    We therefore define the relatively natural forest $F'$ by including a relatively natural edge in $F'$ if and only if it is entirely contained in $F''$.   It is easy to check that replacing $F''$ by $F'$ does not change the $0$-simplex of  $\Autre_{n-1}$ obtained by collapsing components to points.   If $F'$ is empty then $r(G,p,\rho)= r(G',p',\rho')$.   Otherwise,  collapsing the components of $F'$ to  points defines an edge in $\Autre_{n-1}$ with endpoints $r(G,p,\rho)$ and $r(G',p',\rho')$.

\section{Stabilizers of free splittings in $\Out(F_n)$}
\label{SectionTreeStabilizers}

In this section we prove Theorem~\ref{TheoremFreeSplitting} --- for any free splitting $F_n \act T$, the subgroup $\Stab[T]$ is a Lipschitz retract of $\Out(F_n)$. 

Throughout this section we drop the subscript $n$, using $\CVK$ to denote the spine of rank~$n$ outer space. By applying Corollary~\ref{CorollarySubgroupComplex}, it is sufficient to find a coarse Lipschitz retraction from the spine of outer space $\CVK$ to a nonempty, connected subcomplex which is invariant by and cocompact under the action of $\Stab[T]$. In Section~\ref{SectionFreeSplittingSubcomplex} we construct such a subcomplex $\CVK^T \subset \CVK$; the construction and verification of the properties needed to apply Corollary~\ref{CorollarySubgroupComplex} will be obtained by showing that $\CVK^T$ is simplicially isomorphic to a certain Cartesian product of multipointed autre espaces. In Section~\ref{SectionSubcomplexRetract} we construct a coarse Lipschitz retraction $\CVK \mapsto \CVK^T$.  As in the proof of Theorem~\ref{TheoremAutRetract}, there are two steps in defining the retraction.  The first uses minimal subtrees to define marked subgraphs realizing a given free factor system.  In the context of $  \Autre_n$,  the free factor system is just $[F_{n-1}]$; in our current context it is $\F(T)$.   In the second step, the marked subgraphs are extended to marked graphs of full rank.  This is essentially trivial in the context of $\Autre_n$  because the marked subgraph is naturally pointed and the full marked graph is obtained by attaching a marked loop to the basepoint.  In our current context, this extension step is considerably more subtle; see the paragraph headed \lq Defining $R$\rq\ and the remark at the end of the section.

\subsection{Free splittings, their stabilizers, and their subcomplexes}
\label{SectionFreeSplittingSubcomplex}

Given a free splitting $F_n \act T$, we construct a subcomplex $\CVK^T \subset \CVK$ which is a geometric model for $\Stab[T]$ as follows:
\begin{lemma}\label{LemmaCVKT}
$\CVK^T$ is a connected flag subcomplex of $\CVK$ preserved by $\Stab[T]$, and the action $\Stab[T] \act \CVK^T$ is properly discontinuous and cocompact. It follows that $\Stab[T]$ is finitely generated, and that for any vertex $[S]$ of $\CVK^T$ the orbit map $\Stab[T] \mapsto \CVK^T$ taking $\phi$ to $[S]\cdot\phi$ is a quasi-isometry.
\end{lemma}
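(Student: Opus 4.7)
The plan is to realize $\CVK^T$ as the full subcomplex of $\CVK$ spanned by those vertices $[S]$ whose underlying free splitting $S$ \emph{refines} $T$, i.e.\ admits an $F_n$-equivariant forest-collapse $S \surjection T$, and then to set up a simplicial isomorphism of $\CVK^T$ with a Cartesian product of spines of multipointed autre espaces, one factor for each $F_n$-orbit of vertices of $T$ with nontrivial stabilizer. All four assertions of the lemma will then follow from this product decomposition, the corresponding properties of the spaces $\Autre[M]_F$ established in Section~\ref{SectionSpine}, and the Milnor-\v{S}varc lemma.

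First I would observe that $\CVK^T$ so defined is a flag subcomplex of $\CVK$ preserved by $\Stab[T]$. The flag property is immediate because the collection of refinements of $T$ is closed under the forest collapses and intermediate refinements that define simplices of $\CVK$; invariance under $\Stab[T]$ follows because an equivariant collapse $S \surjection T$ composed with the action of $\phi \in \Stab[T]$ gives an equivariant collapse $S \cdot \phi \surjection T \cdot \phi = T$.

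For the product structure, let $X = T/F_n$, enumerate the vertices of $X$ with nontrivial vertex groups as $v_1, \ldots, v_M$ with groups $A_1, \ldots, A_M$, and let $k_m$ be the number of half-edges of $X$ incident to $v_m$. For each vertex $[S] \in \CVK^T$, a refinement $S \surjection T$ identifies via Lemma~\ref{LemmaCollapse} the preimage of a chosen vertex lift above $v_m$ with the $A_m$-minimal subtree $S^{A_m} \subset S$, and the $A_m$-orbits of edges of $S$ mapping to the half-edges at $v_m$ give $k_m$ basepoints on the quotient $S^{A_m}/A_m$. The resulting $k_m$-pointed $A_m$-marked graph represents a $0$-simplex of $\Autre[k_m]_{A_m}$, and a Bass-Serre reconstruction recovers $[S]$ from $X$ together with the $M$-tuple of these pointed marked graphs. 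Forest collapses of $S$ that continue to refine $T$ correspond bijectively to simultaneous forest collapses in the individual factors, so the flag condition extends the bijection on $0$-simplices to a simplicial isomorphism
\[
\CVK^T \;\cong\; \Autre[k_1]_{A_1} \times \cdots \times \Autre[k_M]_{A_M}.
\]
Connectivity and local finiteness of $\CVK^T$ then follow from the corresponding facts about each $\Autre[k_m]_{A_m}$.

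Finally, the natural homomorphism from $\Stab[T]$ into the finite group of automorphisms of $X$ has as its kernel a finite-index subgroup $\Stab_0[T]$. By Bass-Serre theory $\Stab_0[T]$ maps with finite kernel into $\prod_m \Aut^{k_m}(A_m)$, and under the simplicial isomorphism above this realizes the $\Stab_0[T]$-action on $\CVK^T$ as a product of the actions $\Aut^{k_m}(A_m) \act \Autre[k_m]_{A_m}$. Each factor action is properly discontinuous and cocompact, so the product action is, and hence so is the action of the finite-index overgroup $\Stab[T]$; finite generation of $\Stab[T]$ and the orbit-map quasi-isometry then follow from Milnor-\v{S}varc. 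The main obstacle is verifying that $\Stab_0[T]$ genuinely lands inside $\prod_m \Aut^{k_m}(A_m)$ rather than in the larger $\prod_m \Aut(A_m)^{k_m}$: one must show that the $k_m$ automorphisms of $A_m$ attached to the $k_m$ half-edges at $v_m$ all lie in a common outer class, which is the content of Lemma~\ref{LemmaFSDescription} that I would invoke or prove in parallel.
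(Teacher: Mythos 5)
Your overall architecture matches the paper's: you set up a simplicial isomorphism between $\CVK^T$ and a Cartesian product of multipointed autre espaces $\Autre[k_m]_{A_m}$, observe that the induced action of $\prod_m \Aut^{k_m}(A_m)$ agrees with the action of a finite-index subgroup of $\Stab[T]$, and conclude via Milnor--\v{S}varc. This is precisely what the paper does through its Lemma~\ref{LemmaFSDescription}. However, there is a genuine gap in the step you slide past, namely your proposed definition of $\CVK^T$ and the attendant claim about vertex preimages.

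You define $\CVK^T$ via the existence of \emph{any} equivariant forest collapse $S \surjection T$, and then assert that Lemma~\ref{LemmaCollapse} ``identifies the preimage of a chosen vertex lift above $v_m$ with the $A_m$-minimal subtree $S^{A_m} \subset S$.'' That is not what Lemma~\ref{LemmaCollapse} gives. It says the collapse carries the $A_m$-minimal subtree of $S$ onto the $A_m$-minimal subtree of $T$, which is the single vertex $V_m$; hence $S^{A_m} \subset f^{-1}(V_m)$, but equality need not hold, and more to the point there need not exist \emph{any} collapse $S\to T$ for which equality holds. The paper's Definition~\ref{DefSigmaT} singles out \emph{tight} semiconjugacies, imposing exactly the two missing conditions: $f^{-1}(V)$ is the $\Stab(V)$-minimal subtree for every $V\in\V^\nt(T)$, and $f$ is injective over $T - \V^\nt(T)$. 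With only the existence of some collapse, these fail. For instance, take $F_3 = \langle a,b,c\rangle$, let $T$ be the Bass--Serre tree of the graph of groups with one vertex labelled $\langle a,b\rangle$ and one loop, and let $G'$ be the marked graph consisting of an $a,b$-rose at a vertex $P$, a $c$-loop at a vertex $Q$, and an edge $e$ joining $P$ to $Q$. The universal cover $\wt{G'}$ collapses equivariantly onto $T$ by collapsing the $\langle a,b\rangle$-forest together with the orbit of $e$, so $[G']$ lies in your version of $\CVK^T$; but $G'$ has two natural edge orbits outside the $\langle a,b\rangle$-core while $T$ has only one edge orbit, so no tight semiconjugacy $\wt{G'}\to T$ exists and $[G']$ is not a vertex of the paper's $\CVK^T$. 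Consequently your map to $\prod_m \Autre[k_m]_{A_m}$ is not a bijection on vertices --- the extra edge orbit of $G'$ is invisible to the factors --- and your claimed simplicial isomorphism, hence cocompactness, does not follow. The fix is to build tightness into the definition of $\CVK^T$ from the outset; once that is in place, your description of the basepoints coming from the half-edges at $v_m$, the reconstruction of $S$ from the co-edge forest $\wh T$ and the factors, and the appeal to Lemma~\ref{LemmaFSDescription} and Milnor--\v{S}varc all go through as you sketch, which is exactly how the paper organizes the proof.
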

\noindent
The second sentence follows from the first by the \MilnorSvarc\ lemma. 

The definition of $\CVK^T$ and proof of Lemma~\ref{LemmaCVKT} takes up the rest of this section, during which we fix the free splitting $F \act T$ and an $F$-invariant natural simplicial structure on~$T$. For any $\phi \in \Stab[T]$ and any representative $\Phi \in \Aut(F_n)$ we denote $h_\Phi \from T \to T$ to be the simplicial homeomorphism uniquely characterized by the property that $h_\Phi \composed \Phi(g) = g \composed h_\Phi \from T \to T$ for all $g \in F_n$.

\begin{definition}[Definition of $\CVK^T$] \label{DefSigmaT} 
Given a proper free splitting $F_n \act S$, a semiconjugacy $f \from S \to T$ is  \emph{tight} if the following~hold:
\begin{enumerate}
\item \label{ItemEdgelets}
For any $V \in \V^\nt(T)$ the subgraph $S^V := f^\inv(V)$ is the $\Stab(V)$-minimal subtree of~$S$. 
\item \label{ItemCoedges}
The map $f$ is injective over the set $T - \V^\nt(T)$. 
\end{enumerate}
The $0$-simplices $\CVK^T_0$ are those proper free splittings $F_n \act S$ for which there exists a tight semiconjugacy $f \from S \to T$. Define $\CVK^T \subset \CVK$ to be the flag subcomplex with $0$-skeleton $\CVK_0^T$. 
\end{definition}

$\Stab[T]$ invariance of $\CVK^T$ follows from invariance of $\CVK_0^T$: given $\phi \in \Stab[T]$ and a $0$-simplex $[S] \in \CVK_0^T$ with tight semiconjugacy $f \from S \to T$, for any representative $\Phi \in \Aut(F)$ it follows that $h_\Phi\composed f \from S \to T$ is tight semiconjugacy for $[S] \cdot \phi$ which is therefore in $\CVK_0^T$.

After establishing some notation used in this section and the next, we state Lemma~\ref{LemmaFSDescription} which provides an explicit description of $\Stab[T]$ and $\CVK^T$ from which the remaining clauses of Lemma~\ref{LemmaCVKT} immediately follow. 

\smallskip
\textbf{Notational conventions for $T$ and $\CVK^T$.} 
Let $\V^\nt=\V^\nt(T)$ be the vertices of $T$ with nontrivial stabilizer and let $L$ be the number of orbits of the action $F_n \act \V^\nt$. Let $X = T / F_n$ be the quotient graph of groups, with simplicial structure inherited from the natural structure on $T$. Let $X^\nt = \{v_1,\ldots,v_L\} \subset X$ be image of $\V^\nt$, in other words, the vertices of $X$ that are labelled by nontrivial subgroups. For each $\ell=1,\ldots,L$, let $M_\ell$ denote the valence of $v_\ell$ in~$X^\nt$, and let $\eta_{\ell 1},\ldots,\eta_{\ell M_\ell}$ enumerate the oriented edges of $X^\nt$ with initial vertex $v_\ell$. Choose a lift $V_\ell \in \V^\nt$ of each $v_\ell$, and let $A_\ell = \Stab(V_\ell)$, so $\F(T) =  \{[A_1],\ldots,[A_L]\}$. The action of $A_\ell$ on oriented edges of $T$ with initial vertex $V_\ell$ has $M_\ell$ orbits; choose orbit representatives denoted $\ti\eta_{\ell 1},\ldots,\ti\eta_{\ell M_\ell}$, with $\ti\eta_{\ell m}$ mapping to $\eta_{\ell m}$. 

With notation fixed as in the previous paragraph, henceforth we will let $\ell$ take values in $1,\ldots,L$ and, when the value of $\ell$ is fixed, we will let $m$ take values in $1,\ldots,M_\ell$. 

Define the \emph{co-edge forest} of $T$ to be the $F_n$-forest $\wh T$ obtained from $T$ by removing the vertex subset $\V^\nt$ and taking the completion: for each $\ell,m$ and each $g \in F_n$, after removing the initial vertex $g \cdot V_\ell$ of the oriented edge $g \cdot \ti\eta_{\ell m} \subset T$, we add a valence $1$ vertex denoted $\wh V_{\ell m g} \in \wh T$ in its place. There is a natural equivariant simplicial quotient map $\wh T \to T$ that is $M_\ell$-to-one over each $g \cdot V_\ell$ and is otherwise injective, taking each $\wh V_{\ell m g}$ to $g \cdot V_\ell$.

Given a tight semiconjugacy $f \from S \to T$, denoting the map to the quotient marked graph as $q \from S \to S/F_n=G$, and recalling from Definition~\ref{DefSigmaT} the minimal subtree $S^{V_\ell}$ for the restricted action of $\Stab(V_\ell)$, the subgraphs $H_\ell = q(S^{V_\ell})$ are pairwise disjoint, connected, core subgraphs representing the free factor system $\F(T) = \{[A_1],\ldots,[A_L]\}$. Letting $[S]=[G]$ be the initial $0$-simplex of some $k$-simplex $\Sigma(G;E_1,\ldots,E_k) \subset \CVK$, we have $\Sigma(G;E_1,\ldots,E_k) \subset \CVK^T$ if and only if $E_i \subset H_1 \union\cdots\union H_L$ for each $i=1,\ldots,k$, if and only if $E_k \subset H_1 \union\cdots\union H_L$. 

\smallskip\textbf{Explicit description of $\Stab[T]$.} For any $\phi \in \Stab[T]$ and any choice of $\Phi \in \Aut(F_n)$ representing $\phi$, the simplicial homeomorphism $h_\Phi \from T \to T$ descends to a corresponding simplicial homeomorphism of the quotient graph of groups $X$ which is is well-defined independent of the choice of $\Phi$. This defines a simplicial action $\Stab[T] \act X$ whose kernel is a finite index subgroup denoted $\Stab^f[T]$.

\begin{lemma} \label{LemmaFSDescription}
For each free splitting $F \act T$ with notation as above, there exists a group isomorphism 
\begin{equation}
\Stab^f[T] \approx \Aut^{M_1}(A_1) \oplus \cdots \oplus \Aut^{M_L}(A_L)
\end{equation}
and a simplicial homeomorphism
\begin{equation}\label{EqProduct}
\I \from \CVK^T \approx \Autre[M_1]_{A_1} \cross \cdots \cross \Autre[M_L]_{A_L}
\end{equation}
such that the actions $\Stab^f[T] \act \CVK^T$ and $\Aut^{M_\ell}(A_\ell) \act\Autre[M_\ell]_{A_\ell}$ agree, for each $\ell$. 
\end{lemma}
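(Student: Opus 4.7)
I will construct the simplicial homeomorphism $\I$ explicitly on $0$-simplices from a tight semiconjugacy, extend simplicially, verify bijectivity via an inverse construction, and then define the group isomorphism so that it transports actions through $\I$. The main obstacle is the inverse construction, where pieces indexed by the $V_\ell$ must be assembled into a single $F_n$-equivariant tree respecting the combinatorics of~$T$.

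\textbf{Definition of $\I$ on $0$-simplices.} Fix $[S] \in \CVK^T_0$ with tight semiconjugacy $f \from S \to T$. By Definition~\ref{DefSigmaT}, $S^{V_\ell} = f^\inv(V_\ell)$ is the $A_\ell$-minimal subtree of $S$, and $f$ is injective over $T - \V^\nt$. For each $\ell,m$ the edge $\ti\eta_{\ell m}$ thus has a unique preimage edge $\wh\eta_{\ell m} \subset S$ with exactly one endpoint $\ti p_{\ell m}$ in $S^{V_\ell}$. Let $H_\ell = A_\ell \backslash S^{V_\ell}$ (a core graph, since $A_\ell$ acts freely on $S^{V_\ell}$ which is $A_\ell$-minimal), and let $p_{\ell m} \in H_\ell$ be the image of $\ti p_{\ell m}$. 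Using $\ti p_{\ell m}$ as a chosen lift identifies $A_\ell \approx \pi_1(H_\ell,p_{\ell m})$ and determines a pointed marking $\rho_{\ell m} \from (R_{A_\ell},p_{A_\ell}) \to (H_\ell,p_{\ell m})$; as $m$ varies with $\ell$ fixed these markings are freely homotopic. Define the $\ell$-th factor of $\I([S])$ to be the $M_\ell$-pointed $A_\ell$-marked graph $(H_\ell; (p_{\ell m}); (\rho_{\ell m}))$.

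\textbf{Simplicial extension and bijectivity.} As recorded just before the statement of the lemma, a simplex $\Sigma(G;E_1 \subset \cdots \subset E_k) \subset \CVK$ lies in $\CVK^T$ exactly when $E_k \subset H_1 \union \cdots \union H_L$; then each $E_i$ decomposes as $\bigsqcup_\ell E_{i,\ell}$ with $E_{i,\ell} \subset H_\ell$ a relatively natural subforest (the $p_{\ell m}$ are attaching points of surviving co-edges, hence are not collapsed). Collapsing the $E_{i,\ell}$ in each factor gives a $k$-simplex of the product, so $\I$ extends simplicially and is manifestly injective. For surjectivity, given a $0$-simplex of the product, build $S$ by taking the pointed universal cover $(\wt H_\ell, \ti p_{\ell 1})$ of each factor with its $A_\ell$-action, reading off the additional basepoints $\ti p_{\ell m}$ from the markings $\rho_{\ell m}$, forming $F_n \cross_{A_\ell} \wt H_\ell$ using coset representatives, taking the disjoint union over $\ell$, and attaching a trivially-stabilized edge for each $F_n$-orbit of $\ti\eta_{\ell m}$ according to the incidence data of~$T$. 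The resulting $F_n$-tree $S$ carries an obvious tight semiconjugacy to $T$ and represents the required $0$-simplex of $\CVK^T$; the same assembly applied fiberwise over a product simplex yields the simplicial inverse.

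\textbf{Group isomorphism and action compatibility.} Given $\phi \in \Stab^f[T]$ with representative $\Phi \in \Aut(F_n)$, triviality of $\phi$ on $X$ forces $h_\Phi(\ti\eta_{\ell m}) = g_{\ell m} \cdot \ti\eta_{\ell m}$ for a unique $g_{\ell m} \in F_n$. Equivariance of $h_\Phi$ and edge-stabilizer preservation give $\Phi(A_\ell) = \Phi(g_{\ell m})^\inv A_\ell \Phi(g_{\ell m})$, so $\Phi_{\ell m}(a) := \Phi(g_{\ell m} a g_{\ell m}^\inv)$ is an automorphism of $A_\ell$. For fixed $\ell$ the elements $g_{\ell m'} g_{\ell m}^\inv$ lie in $g_{\ell m} A_\ell g_{\ell m}^\inv$, so $\Phi_{\ell m'}$ and $\Phi_{\ell m}$ differ by an inner automorphism of $A_\ell$; hence $(\Phi_{\ell m})_m \in \Aut^{M_\ell}(A_\ell)$. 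A short calculation shows that replacing $\Phi$ by $\text{inn}(c)\composed\Phi$ leaves each $\Phi_{\ell m}$ unchanged, so $\phi \mapsto (\Phi_{\ell m})_{\ell,m}$ is a well-defined homomorphism $\Stab^f[T] \to \bigoplus_\ell \Aut^{M_\ell}(A_\ell)$. Chasing the construction of $\I$ shows that this homomorphism intertwines $\Stab^f[T] \act \CVK^T$ with the product action on $\prod_\ell \Autre[M_\ell]_{A_\ell}$; both actions being properly discontinuous and cocompact with matching finite cell stabilizers, the homomorphism is an isomorphism.
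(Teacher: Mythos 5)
Your construction of $\I$ on $0$-simplices, the simplicial extension via decomposing a collapsible subforest $E \subset H_1\union\cdots\union H_L$ into its pieces $E_\ell$, the inverse assembly (attaching co-edges along the $Q_{\ell m}$), and the formula $\Phi_{\ell m}(a)=\Phi(g_{\ell m} a g_{\ell m}^\inv)$ for the group homomorphism all match the paper's proof; in particular, your formula is exactly what the paper obtains after rechoosing $\Phi$ so that $h_\Phi$ fixes $\ti\eta_{\ell m}$, and your checks of well-definedness and the inner-automorphism compatibility across $m$ are correct.

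The gap is in the final sentence, where you conclude that the homomorphism $\Theta \from \Stab^f[T] \to \bigoplus_\ell\Aut^{M_\ell}(A_\ell)$ is an isomorphism because ``both actions [are] properly discontinuous and cocompact with matching finite cell stabilizers.'' This gives injectivity (faithfulness of the $\Stab^f[T]$-action plus the intertwining), but it does not give surjectivity. An intertwined injection between two groups acting properly discontinuously, cocompactly, and with identical cell stabilizers on the same complex need not be onto: take $2\Z \hookrightarrow \Z$, both acting by translation on $\R$ triangulated at the integers --- both actions are free, cocompact, and properly discontinuous, and the inclusion intertwines them, yet it is proper. What fails in that example, and what your argument does not address, is that the two actions need not have the \emph{same orbits}; from matching stabilizers alone one gets only that $\Theta(\Stab^f[T])$ has finite index in $\bigoplus_\ell\Aut^{M_\ell}(A_\ell)$. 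The paper closes this gap by an explicit realization argument: starting from an arbitrary $\Psi=(\Psi_{\ell m}) \in \bigoplus_\ell\Aut^{M_\ell}(A_\ell)$, one builds a homotopy equivalence $f \from G \to G$ of a suitably chosen marked graph $G$ in $\CVK^T$ (with each $H_\ell$ collapsed to a rose, and with $f$ acting as $\Psi_{\ell 1}$ on $H_\ell$, as the identity outside, and prepending the coboundary closed paths $\gamma_{\ell m}$ to the initial segments $\epsilon_{\ell m}$ to realize the remaining $\Psi_{\ell m}$), and then verifies directly that the resulting $\theta \in \Out(F_n)$ lies in $\Stab^f[T]$ with $\Theta(\theta)=\Psi$. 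You need some argument of this kind --- producing a preimage, or establishing that the two actions have the same orbit of every cell --- to complete the proof of surjectivity.
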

\noindent
Lemma~\ref{LemmaCVKT} is an immediate consequence of Lemma~\ref{LemmaFSDescription} and the fact, proved in Section~\ref{SectionSpine}, that $\Aut^{M_\ell}(A_\ell) \act \Autre[M_\ell]_{A_\ell}$ is a properly discontinuous, cocompact action on a connected complex. We shall construct the simplicial homeomorphism $\I$ only on $1$-skeleta, which suffices for application to Lemma~\ref{LemmaCVKT}; the full description of $\I$ is left to the reader. 

\smallskip{\bfseries The homeomorphism $\I$ on $0$-skeleta.} Given $[S] \in \CVK_0^T$ with tight semiconjugacy $f \from S \to T$, for each $\ell,m$ let $\wh \eta_{\ell m}$ be the unique natural oriented edge of $S$ such that $f(\wh \eta_{\ell m})=\ti\eta_{\ell m}$. Let $Q_{\ell m} \in S^{V_\ell}$ be the initial vertex of $\wh \eta_{\ell m}$; note that if $g \in F_n$, the point $g \cdot Q_{\ell m} \in S^{g \cdot V_\ell}$ is the initial vertex of~$g \cdot \wh \eta_{\ell m}$. Taking the action $A_\ell \act S^{V_\ell}$ together with the $M_\ell$ tuple of points $Q_{\ell 1},\ldots,Q_{\ell M_\ell} \in S^{V_\ell}$, we obtain an $M$-pointed proper $A_\ell$ splitting representing a $0$-simplex $\I_\ell[S] \in \Autre[M_\ell]_{A_\ell}$. Define the $0$-simplex $\I[S] = (\I_1[S],\ldots,\I_L[S]) \in \Autre[M_1]_{A_1} \cross \cdots \cross \Autre[M_L]_{A_L}$. 

For the inverse of $\I$, a $0$-simplex of $\Autre[M_1]_{A_1} \cross \cdots \cross \Autre[M_L]_{A_L}$ is determined by the following data for each $\ell$: a proper, free splitting $A_\ell \act S_\ell$, and an $M_\ell$-tuple of points $Q_{\ell 1},\ldots,Q_{\ell M_\ell} \in S_\ell$. From this data we construct a $0$ simplex $[S] \in \CVK^T$ as follows. Defining $S^{V_\ell} = S_\ell$, the $L$-tuple of actions $A_\ell \act S^{V_\ell}$ extends uniquely (up to conjugacy of $F_n$ actions on graphs) to an ambient action $F_n \act \bigcup_{V \in \V^\nt} S^V$ on a forest having one component $S^V$ for each $V \in \V^\nt$ so that $S^V$ is stabilized by $\Stab[V]$ and so that the action of $\A_\ell = \Stab[V_\ell]$ on $S^{V_\ell}=S_\ell$ is the action originally given; a formal description of this action, which we leave to the reader, comes from the standard representation theory formula for inducting a subgroup representation up to a representation of the ambient group. 

Define $S$ be the quotient graph of the disjoint union of the co-edge forest $\wh T$ and the forest $\bigcup_{V \in \V^\nt} S^V$ obtained by equivariantly identifying each valence~$1$ vertex $\wh V_{\ell m g} \in \wh T$ with the point $g \cdot Q_{\ell m} \in S^{g \cdot V_\ell}$. The actions of $F_n$ on $\wh T$ and $\bigcup_{V\in\V^\nt} S^V$ clearly combine to give an action $F_n \act S$. The equivariant maps $\wh T \mapsto T$, $\bigcup_{V\in\V^\nt} S^V \mapsto \V^\nt \subset T$ clearly induce an equivariant map $f \from S \mapsto T$. The graph $S$ is a tree because $T$ is a tree and the inverse image of each point of $T$ is a subtree of $S$. Minimality of the action $F \act S$ follows from minimality of the actions $A_\ell \act S_\ell$ and $F_n \act T$. The map $S \mapsto T$ is a tight semiconjugacy by construction. 

Clearly these two constructions are inverse functions of each other, and so $\I$ is indeed a bijection of $0$-skeleta in~\pref{EqProduct}.

\smallskip{\bfseries The homeomorphism $\I$ on $1$-skeleta.} Consider a proper, free splitting $F \act S$ and quotient marked graph $q \from S \to S/F=G$ representing a $0$-simplex $[S]=[G] \in \CVK^T$. It suffices to prove that $\I$ restricts to a bijection between the set of terminal $0$-simplices of those $1$-simplices in $\CVK^T$ with initial point $[G]$, and the set of terminal $0$-simplices of those $1$-simplices in $\Autre[M_1]_{A_1} \cross\cdots\cross \Autre[M_L]_{A_L}$ with initial point $\I[G]$. The image $0$-simplex $\I[G] = (\I_1[G],\ldots,\I_L[G])$ is as described above, where each $\I_\ell[G]$ is represented by an $M_\ell$-pointed $A_\ell$-marked graph with underlying graph $H_\ell = q(S^{V_\ell})$ and $M_\ell$-tuple of points $q(Q_{\ell m}) \in H_\ell$; the $M_\ell$-tuple of pointed markings is understood. The $1$-simplices of $\CVK^T$ with initial vertex $[G]$ have the form $\Sigma(G;E)$ where $E \subset H_1 \union\cdots\union H_L$ is a nonempty natural subforest of $G$. Each intersection $E_\ell = E \intersect H_\ell$ determines an ordered simplex of $\Autre[M_\ell]_{A_\ell}$ having initial $0$-simplex $\I_\ell[S]$, of dimension $0$ or $1$ depending on whether $E_\ell$ is empty or nonempty, at least one of which has dimension~$1$ because at least one of $E_\ell$ is nonempty; taking these together we obtain a $1$-simplex $\I(\Sigma(G;E)) \in \Autre[M_1]_{A_1} \cross\cdots\cross \Autre[M_L]_{A_L}$ with initial point $\I[G]$. Clearly an arbitrary such $L$-tuple $(E_1,\ldots,E_L)$ can occur, since we can choose any subforest $E_\ell \subset H_\ell$, empty or not, as long as at least one is nonempty, and then we take $E=E_1\union\cdots\union E_L$. We therefore obtain a bijection of $1$-simplices with initial point $[G]$ and $1$-simplices with initial point $\I[G]$, and from the construction the terminal points of these $1$-simplices clearly correspond under $\I$, as required.

\smallskip{\bfseries The action isomorphism for $\Stab^f[T]$.} For each $\phi \in \Stab^f[T]$, and each $\ell$, all representatives $\Phi \in \Aut(F_n)$ of $\phi$ have the property that $h_\Phi$ preserves the $F$-orbit of each~$V_\ell$, and so amongst these representatives there exists one that satisfies $h_\Phi(V_\ell)=V_\ell$; any such $\Phi$ preserves $A_\ell = \Stab(V_\ell)$ and therefore gives an outer automorphism of $A_\ell$ which is well-defined independent of the choice of $\Phi$, thereby producing a homomorphism $\theta_\ell \from \Stab^f[T] \to \Out(A_\ell)$. For each $\ell,m$, all representatives $\Phi \in \Aut(F_n)$ of $\phi$ for which $h_\Phi(V_\ell)=V_\ell$ have the property that $h_\Phi$ preserves the orbit of the oriented edge $\ti\eta_{\ell m}$ under the action of $\Stab(V_\ell)$, and so amongst these representatives there exists one which satisfies the further restriction that $h_\Phi(\ti\eta_{\ell m}) = \ti\eta_{\ell m}$. This gives a homomorphism $\Theta^m_\ell \from \Stab^f[T] \to \Aut(A_\ell)$ well-defined independent of the restricted choice of $\Phi$. By construction each $\Theta^m_\ell$ is a lift of $\theta_\ell$, and so letting $m$ vary we obtain a homomorphism $\Theta_\ell = (\Theta^m_1,\ldots,\Theta^{M_\ell}_\ell) \from \Stab^f[T] \to \Aut^{M_\ell}(A_\ell)$. Letting $\ell$ vary we a homomorphism $\Theta = (\Theta_1,\ldots,\Theta_L) \from \Stab^f[T] \to \Aut^{M_1}(A_1) \oplus \cdots \oplus \Aut^{M_L}(A_L)$. 

By construction, the two actions of $\Stab^f[T]$ of $\Autre[M_1]_{A_1} \cross \cdots \cross \Autre[M_L]_{A_L}$ coincide --- one coming from the homomorphism $\Theta$, and the other from the action $\Stab^f[T] \act \CVK^T$ followed by the simplicial isomorphism $\CVK^T \xrightarrow{\I} \Autre[M_1]_{A_1} \cross \cdots \cross \Autre[M_L]_{A_L}$. From injectivity of the action $\Stab^f[T] \act \CVK^T$ it follows that $\Theta$ is injective. 

For surjectivity of $\Theta$ we switch to the marked graph point of view. Pick a marked graph $G$ representing a vertex of $\CVK^T$, with subgraphs $H_1,\ldots,H_L$ representing $[A_1],\ldots,[A_L]$. By collapsing a maximal forest of each $H_\ell$ to a point we may assume that $H_\ell$ is a rose whose base vertex $q_\ell$ equals its frontier in $G$. By projecting a tight semiconjugacy $\wt G \mapsto T$ we obtain a quotient map $G \mapsto X$ which collapses each $H_\ell$ to $v_\ell$ and is otherwise injective. For $\ell,m$  let $e_{\ell m}$ be the oriented edge of $G$ whose image in $X$ is $\eta_{\ell m}$, and so $e_{\ell m}$ has initial vertex~$q_{\ell}$. Let $\epsilon_{\ell m}$ be the initial quarter of the oriented $1$-simplex $e_{\ell m}$, and so as $\ell$, $m$ vary the oriented arcs $\epsilon_{\ell m}$ are pairwise disjoint except possibly at their initial points. Picking a base point for $G$ and paths to $q_{\ell}$ for each~$\ell$, and rechoosing $A_\ell$ in its conjugacy class if necessary, we determine an isomorphism $A_\ell \approx \pi_1(H_\ell,q_\ell)$. An arbitrary $\Psi \in \Aut^{M_1}(A_1) \oplus \cdots \oplus \Aut^{M_L}(A_L)$ is determined by choosing $\Psi_{\ell m} \in \Aut(A_\ell)$ for each $\ell, m$, subject to the requirement that for fixed $\ell$ all of the $\Psi_{\ell m}$ represent the same element of $\Out(A_\ell)$. This data is realized by a homotopy equivalence $f \from G \to G$ as follows. First, $f$ is the identity on $G \setminus \bigl( (\bigcup_{\ell} H_\ell ) \union \bigcup_{\ell m} \epsilon_{\ell m}) \bigr)$. Next, for each $\ell$, $f$ is the identity on $\epsilon_{\ell 1}$ and $f \restrict H_\ell \from (H_\ell,q_\ell) \to (H_\ell,q_\ell)$ is chosen to represent $\Psi_{\ell 1}$. Finally, for each $\ell$ and each $m=2,\ldots,M_\ell$, the automorphisms $\Psi_{\ell 1}$ and $\Psi_{\ell m}$ differ by an inner automorphism of $A_\ell$ which is represented by a closed path $\gamma_{\ell m}$ in $H_\ell$ based at $q_\ell$, and $f(\epsilon_{\ell m}) = \gamma_{\ell m} \epsilon_{\ell m}$. The element $\theta \in \Out(F)$ defined by the homotopy equivalence $f \from G \to G$ is clearly in $\Stab^f[T]$, and by construction $\Theta(\theta) = \Psi$. 

This completes the proof of Lemma~\ref{LemmaCVKT}.

\subsection{Retracting $\CVK$ to $\CVK^T$}
\label{SectionSubcomplexRetract}

To prove Theorem~\ref{TheoremFreeSplitting}, by combining Corollary~\ref{CorollarySubgroupComplex}, Lemma~\ref{LemmaCoarseRetract}, and Lemma~\ref{LemmaCVKT} it suffices to prove:

\begin{theorem} \label{retraction for splittings}  For any free splitting $F_n \act T$, there exists a coarse Lipschitz retraction of $0$-skeleta $R \from \CVK_0 \mapsto \CVK_0^T$.
\end{theorem}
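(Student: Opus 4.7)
The plan is to apply Corollary~\ref{CorollarySubgroupComplex}~\pref{ItemZeroLipschitzRetract} and Lemma~\ref{LemmaCVKT}: it suffices to exhibit a coarse Lipschitz map $R \from \CVK_0 \to \CVK_0^T$ which, restricted to $\CVK_0^T$, displaces each vertex a uniformly bounded distance, since a small correction then promotes $R$ to a genuine retraction. By Lemma~\ref{LemmaFSDescription}, a target vertex of $\CVK_0^T$ is equivalent to a tuple, one per $\ell = 1, \ldots, L$, of $M_\ell$-pointed proper $A_\ell$-free splittings; this reduces the construction of $R[S']$ to producing, for each $\ell$, an $M_\ell$-pointed $A_\ell$-tree out of~$S'$.

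Given $[S'] \in \CVK_0$, we build $R[S']$ in two steps. First, for each $\ell$ take the $A_\ell$-minimal subtree $S'^{A_\ell} \subset S'$, which is a proper free $A_\ell$-action and hence provides the underlying $A_\ell$-splittings. Second, for each co-edge $\ti\eta_{\ell m}$ of $T$ we choose an attachment point $Q_{\ell m} \in S'^{A_\ell}$ and assemble $S$ exactly as in Section~\ref{SectionFreeSplittingSubcomplex}: the quotient of $\wh T \sqcup \bigsqcup_{V \in \V^\nt} S^V$ under the equivariant identifications $\wh V_{\ell m g} \sim g \cdot Q_{\ell m}$, where $S^V = g \cdot S'^{A_\ell}$ for $V = g \cdot V_\ell$. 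The resulting collapse $f \from S \to T$, sending each $S^V$ to $V$ and each co-edge of $\wh T$ homeomorphically to its image in $T$, is a tight semiconjugacy, so $[S] \in \CVK_0^T$. When $[S']$ already lies in $\CVK_0^T$, any tight semiconjugacy $S' \to T$ supplies the data $(S'^{A_\ell}, Q_{\ell m})$ up to the freedom of the two constructions, so this $R$ will displace such vertices by only a bounded amount.

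The hard part is choosing the $Q_{\ell m}$ so that $R$ is coarse Lipschitz on $\CVK_1$. Using the marking on $S'$ together with the Bass-Serre structure of $T$, the $A_\ell$-orbits of attachment points of $S'^{A_\ell}$ in $S'$ --- points of $S'^{A_\ell}$ where a component of $S' \setminus F_n \cdot \bigcup_k S'^{A_k}$ meets $S'^{A_\ell}$ --- are canonically partitioned into $M_\ell$ classes indexed by the co-edges at $V_\ell$, and $Q_{\ell m}$ is selected from the $m$-th class by a canonical rule. To verify coarse Lipschitz behavior, take a forest collapse $S' \to S'' = S'/E$ representing a 1-simplex of $\CVK$ and split $E = E_{\text{in}} \sqcup E_{\text{out}}$, with $E_{\text{in}}$ the subedges of $E$ that lie in $F_n \cdot \bigcup_\ell S'^{A_\ell}$ and $E_{\text{out}}$ the remaining subedges. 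By Lemma~\ref{LemmaCollapse}~\pref{ItemSubTree}, collapsing $E_{\text{in}}$ induces a forest collapse inside the $S^V$ pieces of $R[S']$, contributing at most one edge in $\CVK^T$. Collapsing $E_{\text{out}}$ preserves the canonical partition but may move the chosen representatives $Q_{\ell m}$; bounding the resulting shift by $O(1)$ edges of $\CVK^T$, uniformly in $E$, is the principal technical content of the argument. As the introduction warns, some natural-looking rules for $Q_{\ell m}$ fail this bound, so the careful design of the selection rule is the main obstacle.
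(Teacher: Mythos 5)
Your outline of the framework matches the paper's: form the $A_\ell$-minimal subtrees of $S'$, pull them apart into a forest, assemble $S$ from that forest and the co-edge forest $\wh T$ by equivariant identifications along chosen points $Q_{\ell m}$, and then verify that the result is a tight semiconjugacy and that the assignment is coarse Lipschitz on the $1$-skeleton. But the proposal leaves its central ingredient undefined: you say the $A_\ell$-orbits of attachment points on $S'^{A_\ell}$ are ``canonically partitioned into $M_\ell$ classes'' and that $Q_{\ell m}$ is chosen by ``a canonical rule,'' without giving the partition or the rule, and then you say that bounding the shift of $Q_{\ell m}$ under a forest collapse ``is the principal technical content,'' again without doing it. For an arbitrary vertex $[S'] \in \CVK_0$ there is no intrinsic indexing of frontier points of $S'^{A_\ell}$ by the co-edges of $T$ at $V_\ell$, and the number of $A_\ell$-orbits of such points need not match $M_\ell$; so the claimed canonical partition does not exist without extra structure. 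This is exactly where, as the paper's closing remark warns, several natural-looking choices (centroids, innermost discs, etc.) fail to be Lipschitz.

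The paper's resolution is to import the needed structure from a fixed base point. One picks a base vertex $[S_0] \in \CVK_0^T$ with tight semiconjugacy $S_0 \to T$ and, once and for all, a ray $\gamma^0_{\ell m}$ in $S_0$ whose initial edge $e^0_{\ell m}$ maps to $\ti\eta_{\ell m}$; its ideal endpoint $\xi_{\ell m} \in \partial F_n$ lies outside $\partial A_\ell$. For an arbitrary $S'$, the point $Q'_{\ell m}$ is defined to be the initial point of the unique ray in $S'$ with ideal endpoint $\xi_{\ell m}$ meeting $S^{V_\ell}$ only in its base point (equivalently, the closest-point projection of $\xi_{\ell m}$ to $S^{V_\ell}$). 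Because the ideal endpoints $\xi_{\ell m}$ are fixed in $\partial F_n$, this rule is well defined for every $S'$, and under a forest collapse $S' \to S''$ Lemma~\ref{LemmaCollapse} shows the ray and its base point are carried to the corresponding ray and base point in $S''$; this is what yields the Lipschitz constant $1$ and the retraction property by induction along edge paths from $[S_0]$. Without a concrete selection rule and its compatibility with forest collapses, your argument is incomplete, and supplying one is precisely the nontrivial step of the proof.
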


Throughout this section we will freely use the ``Notational conventions for $T$ and $\CVK^T$'' established in Section~\ref{SectionFreeSplittingSubcomplex}.

\smallskip

Suppose that we are given the following data:
\begin{description}
\item[(D1)] An action $F_n \act \bigcup_{V \in \V^\nt} S^V$ on a forest having one component $S^V$ for each $V \in \V^\nt$, so that $S^V$ is stabilized by $\Stab[V]$ and the action $\Stab[V] \act S^V$ is properly discontinuous and minimal.
\item[(D2)] A point $Q_{\ell m} \in S^{V_\ell}$, for each $\ell = 1,\ldots,L$ and each $m=1,\ldots,M_\ell$.
\end{description}
From (D1) and (D2) we construct a $0$-simplex $[S] \in \CVK_0^T$ using the exact method used in the proof of Lemma~\ref{LemmaCVKT} for verifying surjectivity of the map $\I$ on $0$-skeleta. As a graph, $S$ is the disjoint union of the co-edge forest $\wh T$ and the forest $\bigcup_{V \in \V^\nt} S^V$, modulo the identification of each valence~$1$ vertex $\wh V_{\ell m g} \in \wh T$ with the point $g \cdot Q_{\ell m} \in S^{g \cdot V_\ell}$. The actions of $F_n$ on $\wh T$ and on $\bigcup_{V \in \V^\nt} S^V$ are clearly consistent with these identifications, and so they combine to induce a simplicial action $F_n \act S$. There is an equivariant simplicial map $f \from S \to T$ induced by the map that takes $S^V$ to $V \in \V^\nt \subset T$ and by the natural quotient map $\wh T \to T$. Since the pre-image of each point of $T$ is a point or a tree in $S$, the graph $S$ is a tree. The action $F_n \act S$ is minimal because of minimality of the actions $\Stab[V] \act S^V$ and $F_n \act T$. By construction the action $F_n \act S$ is proper, and $f \from S \to T$ is a tight semiconjugacy, and so we have the desired $0$-simplex~$[S]$.

\paragraph{Defining the map $R \from \CVK_0 \to \CVK_0^T$.} Given an arbitrary $0$-simplex $[S'] \in \CVK$ represented by a proper, free splitting $F_n \act S'$, for each $V \in \V^\nt$ let $S^V \subset S'$ be the minimal subtree for the restricted action $\Stab(V) \act S'$. These subtrees $S^V$ need not be pairwise disjoint in~$S'$. Pull them apart by forming their disjoint union over $\V^\nt$, resulting in a forest $\disjunion_{V \in \V^\nt} S^V$ equipped with an action of $F_n$ that evidently satisfies data condition~(D1) --- formally one may identify this disjoint union with the set of ordered pairs $(x,V) \in S' \cross \V^\nt$ such that $x \in S^V$. 

The central idea of the proof is a method for describing the (D2) data of points $Q_{\ell m} \in S^{V_\ell}$ that is needed in order to complete with the definition of $R[S']$. Choose once and for all a base $0$-simplex $[S_0] \in \CVK_0^T$, represented by a proper free splitting $F_n \act S_0$ with tight semiconjugacy $S_0 \mapsto T$. Each of the oriented edges $\ti\eta_{\ell m}$ in $T$ has a unique pre-image in $S_0$ that we denote $e^0_{\ell m}$.  Choose once and for all, for each $\ell$ and $m$, a ray $\gamma^0_{\ell m}$ in $S_0$ with initial oriented edge $e^0_{\ell m}$. Let $\xi_{\ell m} \in \bdy F_n$ be the ideal endpoint of $\gamma_{\ell m}$. Note that $\xi_{\ell m} \not\in \bdy A_\ell$ because $e_{\ell m}$ points out of~$S_0^{V_\ell}$. To specify the data (D2) for $R[S']$, for each $\ell,m$ let $Q'_{\ell m}$ be the point in $S^{V_\ell}$ closest to $\xi_{\ell m}$ --- that is, defining $\gamma_{\ell m}(S')$ to be the unique ray in $S'$ with ideal endpoint $\xi_{\ell m}$ whose intersection with $S^{V_\ell}$ equals its base point, let $Q'_{\ell m}$ be the base point. In the disjoint union $\disjunion_{V \in \V^\nt} S^V$, let $Q_{\ell m} \in S^{V_\ell}$ be the point corresponding to $Q'_{\ell m}$ (formally $Q_{\ell m}$ is the ordered pair $(Q'_{\ell m},V^\ell)$). This completes the data specification~(D2), and we may now complete the definition of $R[S']$ as above.

\paragraph{Example.}  Fix a basis  $F_n = \<a_1,\ldots,a_{n-1},a_n\>$ and identify $F_{n-1}=\<a_1,\ldots,a_{n-1}\> \subgroup F_n$ as usual. Let $R_n$ be the rank $n$ rose with oriented edges $e_1,\ldots,e_n$ corresponding to $a_1,\ldots,a_n$, and identify $R_{n-1} = e_1\union\cdots\union e_{n-1} \subset R_n$. Equip the universal cover $\wt R_n$ with an $F_n$ action using a lift $P \in \wt R_n$ of the unique vertex $p \in R_n$ as a basepoint. Let $\wt R_{n-1}$ be the component of the  full pre-image of $R_{n-1}$ that contains $P$. Let $T$ be the free splitting obtained from $\wt R_n$ by collapsing to a point each translate of $\wt R_{n-1}$; let $V$ be the point to which $\wt R_{n-1}$ itself collapses. The collapse map $\wt R_n \to T$ is a tight semiconjugacy, and we choose the base simplex $[S_0]$ of $\CVK_0^T$ to be represented by the proper free splitting $F_n \act \wt R_n$. The quotient graph of groups $X = T/F_n$ is obtained from $R_n$ by collapsing $R_{n-1}$ to a single point $v$, and having a single edge identified with $e_n$. In $X$ there are two oriented edges: $\eta_1$ which is $e_n$ with positive orientation, and $\eta_2$ with opposite orientation. The action of $a_n$ on $T$ has an axis which passes through $V$; let $\ti \eta_1$, $\ti\eta_2$ be the lifts of $\eta_1,\eta_2$ on that axis with initial vertex~$V$. The action of $a_n$ on $\wt R_n$ has an axis which touches $\wt R_{n-1}$ uniquely at~$P$, and this axis is a union of two rays $\gamma^0_1$, $\gamma^0_2$ intersecting at $P$, so that the initial oriented edge $e^0_i$ of $\gamma^0_i$ maps to $\ti\eta_i$ under the collapse map $\wt R_n \to T$. With these choices, the ideal endpoints $\xi_1,\xi_2$ of $\gamma^0_1,\gamma^0_2$ are the attracting and repelling fixed points, respectively, for the action of $a_n$ on~$\bdy F_n$. For any proper free splitting $F_n \act S'$, letting $S^V \subset S'$ denote the minimal subtree for the action of $F_{n-1}$ as usual, and letting $L^{a_n} \subset S'$ denote the oriented axis of~$a_n$, if $L \intersect S^V$ is empty or a single point then $Q'_1=Q'_2$ is the point of $S^V$ closest to $L$, and otherwise $Q'_1, Q'_2$ are the terminal and initial points, respectively, of the oriented arc $L \intersect S^V$.

\paragraph{The map $R$ is a retraction.} We prove that $R$ fixes each $0$-simplex of $\CVK_0^T$, using induction on distance from $[S_0]$ in the $1$-skeleton of $\CVK^T$, starting with $R[S_0]=[S_0]$ which is clear by construction. Consider an oriented $1$-simplex of the form $\Sigma(G_1,E) \subset \CVK^T$, with initial $0$-simplex $[S_1]=[G_1]$ and terminal $0$-simplex $[S_2]=[G_2]$ where $G_2=G_1/E$ and $E \subset G_1$ is a nontrivial natural subforest. The induction step reduces to proving that $R[S_1]=[S_1]$ if and only if $R[S_2]=[S_2]$; the ``only if'' direction is used when $[S_1]$ is closest to $[S_0]$, the ``if'' direction when $[S_2]$ is closest.

Let $\wt E \subset \wt G_1 = S_1$ be the total lift of $E$, and so $S_2 = \wt G_2$ is obtained from $S_1$ by collapsing to a point each component of the forest $\wt E$. Letting $g \from S_1 \to S_2$ be the collapse map, and letting $f_i \from S_i \to T$ be tight semiconjugacies, we note that $f_2 \composed g = f_1$. For each $i=1,2$ and each $\ell,m$, consider the unique oriented edge $e^i_{\ell m} \subset S_i$ for which $f_i(e^i_{\ell m}) = \ti\eta_{\ell m}$, and consider also the ray $\gamma^i_{\ell m} \subset S_i$ with ideal endpoint $\xi_{\ell m}$ that intersects $S_i^{V_\ell}$ solely in its base point. Note that the trees $S_i^{V_\ell}$ are already pairwise disjoint in $S_i$ as required for the (D1) data, and that the tree $S_i$ can then be reconstructed using the (D2) data where $Q^i_{\ell m}$ is the initial point of $e^i_{\ell m}$. This shows that the equation $R[S_i]=S_i$ is equivalent to the statement that $e^i_{\ell m}$ is the initial edge of the ray $\gamma^i_{\ell m}$. It therefore remains to prove that $e^1_{\ell m}$ is the initial oriented edge of $\gamma^1_{\ell m}$ if and only if $e^2_{\ell m}$ is the initial oriented edge of $\gamma^2_{\ell m}$.

Let $e^i \subset \gamma^i_{\ell m}$ be the initial oriented edges. By Lemma~\ref{LemmaCollapse}, the image $g(\gamma^1_{\ell m}) \subset S_2$ is a ray. Since $g \from S_1 \to S_2$ is an equivariant quasi-isometry, the ideal endpoint of $g(\gamma^1_{\ell m})$ is still $\xi_{\ell m}$. Since $e^1 \not\subset \bigcup_{V \in \V^\nt} S^V$, we have $e^1 \not\subset \wt E$, in other words $e^1$ is not collapsed by $g$. It follows that $g(e^1)$ is the initial oriented edge of $g(\gamma^1_{\ell m})$. Also, since $g(S_1^{V_\ell}) = S_2^{V_\ell}$, it follows that the initial endpoint of $g(e^1)$ is the unique point in which $g(\gamma^1_{\ell m})$ intersects $S_2^{V_\ell}$, and so $g(\gamma^1_{\ell m}) = \gamma^2_{\ell m}$ and $g(e^1) = e^2$. Since $f_2 \composed g = f_1$, it follows that $f_1(e^1)=\ti\eta_{\ell m}$ if and only if $f_2(e^2) = \ti\eta_{\ell m}$. Since $f_1,f_2$ are injective over each edge of $T$, it follows that $e^1 = e^1_{\ell m}$ if and only if $e^2 = e^2_{\ell m}$.

\paragraph{The Lipschitz constant is $1$.} Consider $0$-simplices $[S'_i]=[G'_i] \in \CVK_0$  ($i=1,2$) represented by proper free splittings $F \act S'_i$ with quotient graphs of groups $G'_i$, and suppose that $[S'_1]$, $[S'_2]$ are endpoints of a $1$-simplex in $\CVK$. To show that the map $R$ has Lipschitz constant~$1$ it suffices to show that $[S_1]=R[S'_1]$, $[S_2]=R[S'_2]$ are either equal or are endpoints of a $1$-simplex in~$\CVK^T$. 

Up to reordering we have $G'_2 = G'_1/E'$ for some nontrivial natural subforest $E' \subset G'_1$. Letting $\wt E' \subset \wt G'_1 = S'_1$ be the full preimage, the quotient map $G'_1 \mapsto G'_2$ lifts to an equivariant simplicial map $\pi' \from S'_1 \to S'_2$ that collapses each component of $\wt E'$ to a point. In particular, $\pi'$ induces the identity map on $\bdy F_n$. For each $V \in \V^\nt$, letting $S_1^V \subset S'_1$, $S_2^V \subset S'_2$ denote the $\Stab(V)$ minimal subtrees, Lemma~\ref{LemmaCollapse} implies that $\pi'(S_1^V)=S_2^V$. Define $\wt E^V = \wt E' \intersect S_1^V$, and note that the restricted map $\pi^V = (\pi' \restrict S_1^V) \from S_1^V \to S_2^V$ collapses each component of $\wt E^V$ to a point. After forming the disjoint union of the trees $S_i^V$ over $\V^\nt$ as formally described earlier, and forming the disjunion union of the subforests $\wt E^V$ over $\V^\nt$ --- formally described as those pairs $(x,V)$ where $x \in \wt E^V$, $V \in \V^\nt$ --- the collection of maps $\pi^V$ induces an $F$-equivariant map of forests $\Pi \from \disjunion_{V \in \V^\nt} S_1^V \to \disjunion_{V \in \V^\nt} S_2^V$ that collapses to a point each component of the forest $\wt E = \disjunion_{V \in \V^\nt} \wt E^V$. 

We next extend the collapse map $\Pi$ over the attachments of the co-edge forest $\wh T$. For each $i=1,2$ and each $\ell,m$, let $\gamma^i_{\ell m} \subset S'_i$ be the unique ray with ideal endpoint $\xi_{\ell m}$ intersecting $S_i^{V_\ell}$ solely in its base points $Q^{\prime i}_{\ell m}$. By Lemma~\ref{LemmaCollapse} it also follows that $\pi'(\gamma^1_{\ell m})$ is a ray in $S'_2$. Its ideal endpoint is still $\xi_{\ell m}$. Letting $\alpha \subset \gamma^1_{\ell m}$ be the largest initial subsegment of $\gamma^1_{\ell m}$ such that $\pi'(\alpha)$ is a point, it follows that $\pi'(Q^{\prime 1}_{\ell m}) = \pi'(\alpha)$ is the initial point of the ray $\pi'(\gamma^1_{\ell m})$, that initial point is contained in $S_2^{V_\ell}$, and that initial point is the sole point of intersection of the ray $\pi'(\gamma^1_{\ell m})$ with $S_2^{V_\ell}$; it follows that $\pi'(\gamma^1_{\ell m}) = \gamma^2_{\ell m}$ and that $\pi'(Q{\prime 1}_{\ell m}) = Q{\prime 2}_{\ell m}$. After pulling things apart, it follows that $\Pi(Q^1_{\ell m}) = Q^2_{\ell m}$. Since, in forming $S_i$, the valence~$1$ vertex $V_{\ell m} \in \wh T$ is attached to $Q^i_{\ell m} \in \disjunion S_i^V$, and since and the attachments are extended equivariantly, it follows that $\Pi$ extends to an equivariant map $\Pi \from S_1 \to S_2$. 

By construction, $\Pi$ is the equivariant quotient map that collapses to a point each component of the forest $\wt E \subset \disjunion S_1^V \subset S_1$, and so $\Pi$ descends to a map of quotient marked graphs $G_1 = S_1 / F_n \xrightarrow{\pi} S_2 / F_n = G_2$ that collapses to a point each component of $E = \wt E / B_n \subset G_1$. If $E$ contains no natural edge then the map $\pi$ is homotopic to a homeomorphism and $[S_1]=[S_2]$. Otherwise, letting $\wh E \subset E$ be the union of natural edges in $E$, the map $\pi \from G_1 \to G_2$ is homotopic to a map which collapses each component of $\wh E$ to a point, and so there is a $1$-simplex $\Sigma(G_1,\wh E)$ with initial vertex $[S_1]$ and terminal vertex $[S_2]$. This completes the proof of Theorem~\ref{retraction for splittings}.

\subparagraph{Remark.} Our definition of $R \from \CVK_0 \to \CVK^T_0$ depends on a fairly arbitrary choice of points $\xi_{\ell m} \in \partial F_n$. It is tempting to make more canonical choices using one of various other approaches, based on graph theoretic or metric considerations, or based on surgery arguments using Hatcher's sphere complex \cite{Hatcher:HomStability}. Indeed, we tried several such approaches but none of them gave Lipschitz retractions. As an illustration, we return to the example that follows the definition of $R$, using a construction that arises naturally from the point of view of the sphere complex. If $[S'] \in \CVK^T$ then $S^V \intersect a_n \cdot S^V = \emptyset$ from which it follows that $Q'_1$ is the point of $S^V$ closest to $a_n \cdot S^V$ and $Q'_2$ is the point of $S^V$ closest to $a^\inv_n \cdot S^V$. Otherwise, $S^V \intersect a_n S^V$ is a finite tree $\tau$ (which has a natural description in sphere complex language), and we tried choosing $Q'_1$ to be the centroid of $\tau$, and similarly $Q'_2$ to be the centroid of the tree $a_n^\inv(\tau) = S^V \intersect a_n^\inv \cdot S^V$. This approach defines a retraction of $\CVK$ to $\CVK^T$ that one can show is not Lipschitz. We also considered approaches which somehow pick out a certain valence $1$ vertex of $\tau$ (corresponding to picking out an ``innermost disc'' in sphere complex language), but no such approach that we tried yields a Lipschitz retraction.

\section{Stabilizers of free factor systems in $\Out(F_n)$}
\label{SectionTandA}

In this section we prove Theorem~\ref{TheoremCoindex}, regarding coarse Lipschitz retraction versus distortion for the subgroup $\Stab(\F) \subgroup \Out(F_n)$ stabilizing a free factor system $\F$ of $F_n$. 

In section~\ref{SectionCorankOne} we prove Theorem~\ref{TheoremCoindex} item~\pref{ItemFFSCoindexOne}; this is the \emph{only} place in Section~\ref{SectionTandA} where we will appeal to the results of either of Sections~\ref{SectionAutNondistortion} or~\ref{SectionTreeStabilizers}.
In section~\ref{SectionFFSubcomplex} we study a subcomplex $\CVK^\F_n \subset \CVK_n$ corresponding to free factor system~$\F$, to which Corollary~\ref{CorollarySubgroupComplex} can be applied. In section~\ref{SectionCorank2} we prove distortion of $\Stab(\F)$ in $\Out(F_n)$ when $\coindex(\F) \ge 2$, by proving distortion of $\CVK^\F_n$ in $\CVK_n$. 

\subsection{Proof of Theorem~\ref{TheoremCoindex} item~\pref{ItemFFSCoindexOne}.}
\label{SectionCorankOne}

Suppose that $\F$ is a free factor system of coindex~$1$ in~$F_n$. Choose a marked graph $G$ in which $\F$ is realized by a core subgraph $H$ such that each component of $H$ is a rose  and such that the  complement of $H$ is a single oriented edge $E$. Define $T$ to be the free splitting obtained from the universal cover $\wt G$  of $G$ by collapsing each component of the full pre-image $\wt H$ of $H$ to  a point.  In light of Theorem~\ref{TheoremFreeSplitting}, it suffices to show that  $\Stab(\F)=\Stab[T]$.  

The inclusion $\Stab[T] \subset \Stab(\F)$ follows from $\F(T) = \F$, which is immediate from the definition of $T$. For the reverse inclusion, suppose that $\phi(\F) = \F$.   Choose a homotopy equivalence $f :G \to G$ that represents $\phi$ and that preserves $H$.  By Corollary~3.2.2 of \BookOne\  there are (possibly trivial) paths $u,v \in H$ such that either $f(E) = u E v$ or $f(E) = u E^{-1} v$.  In either case, $f$ lifts to  $\ti f: \wt G \to \wt G$ that preserves $\wt H$, inducing a bijection on the set of components of $\wt H$,  and such that the image of any edge in the complement of $\wt H$ crosses exactly one edge in the complement of $\wt H$.  Thus $\ti f$ induces a conjugacy between the given action $F_n \act T$ and the precomposition of that action by an automorphism $\Phi \in \Aut(F_n)$ that represents $\phi$, and it follows that $\phi \in \Stab[T]$.

\subsection{Free factor systems, their stabilizers, and their subcomplexes.} 
\label{SectionFFSubcomplex}

Fix a proper free factor system $\F=\{[A_1],\ldots,[A_k]\}$ of $F_n$ and define a flag subcomplex $\CVK^\F_n$ of $\CVK_n$ by requiring that a $0$-simplex $[(G,\rho)] \in \CVK_n$ is in $\CVK_n^\F$ if and only if   $G$ has a core subgraph $H$ satisfying $\F=[H]$. 

\begin{lemma} \label{LemmaCVKA}
$\CVK_n^\F$ is a connected subcomplex of $\CVK_n$, preserved by $\Stab(\F)$, and with finitely many orbits of cells under the action of $\Stab(\F)$. It follows that $\Stab(\F)$ is finitely generated, and that for any $0$-simplex $[(G,\rho)]$ of $\CVK_n^\F$ the orbit map $\Stab(\F) \mapsto \CVK_n^\F$ taking $\phi$ to $[(G,\rho)]\phi$ is a quasi-isometry.
\end{lemma}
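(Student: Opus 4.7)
The plan is to verify three structural properties of $\CVK_n^\F$---that it is a subcomplex, that it is $\Stab(\F)$-invariant, and that $\Stab(\F) \act \CVK_n^\F$ is cocompact---together with connectedness, and then invoke the \MilnorSvarc\ lemma exactly as in the proof of Lemma~\ref{LemmaCVKT}. The subcomplex property is immediate from the definition of $\CVK_n^\F$ as a flag subcomplex on a specified $0$-skeleton. For $\Stab(\F)$-invariance, suppose $[(G,\rho)]\in\CVK_n^\F$ is witnessed by a core subgraph $H\subset G$ with $[H]=\F$, and $\phi\in\Stab(\F)$ is represented by a homotopy equivalence $\Phi\from R_n\to R_n$. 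Then the same subgraph $H$ witnesses $[(G,\rho\composed\Phi)]\in\CVK_n^\F$, because the associated free factor system in $F_n$ is $(\rho\composed\Phi)_*^{-1}[\pi_1(H)] = \phi^{-1}(\F) = \F$.

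For cocompactness, I would bound the complexity of witness pairs. A rank-$n$ core graph has at most $3n-3$ natural edges, so there are only finitely many homeomorphism classes of pairs $(G,H)$ in which $G$ is a rank-$n$ core graph and the components of $H\subset G$ have ranks matching those of $\F$. Within any such homeomorphism class, any two markings $\rho_1,\rho_2\from R_n\to G$ realizing $\F$ yield vertices in the same $\Stab(\F)$-orbit: combining $\rho_1$ with a homotopy inverse $\bar\rho_2$ produces a self-homotopy-equivalence $R_n\to R_n$ whose outer class preserves $\F$ (by the chase used in the invariance paragraph) and carries one marked graph to the other. Hence there are finitely many $\Stab(\F)$-orbits of $0$-simplices, and finitely many orbits of higher cells follows from the bounded number of natural subforests of each representative graph.

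Connectedness is the main obstacle. I would argue by a blow-up and collapse procedure inside $\CVK_n^\F$: given a witness pair $(G,H)$, every natural edge of $G$ lies either in $H$ or in $G-H$, and collapsing any natural subforest contained in $H$ (resp.\ in $G-H$) leaves a smaller marked graph whose image of $H$ (resp.\ whose image of $H$ itself) still realizes $\F$, so the $1$-simplex of the collapse lies in $\CVK_n^\F$. Iterating reduces any $0$-simplex of $\CVK_n^\F$ to a normal form in which each component of $H$ is a single-vertex rose and $G-H$ carries only the edges needed to realize the co-index. Any two normal forms of the same $(G,H)$ type can be interchanged by an element of $\Stab(\F)$ from the cocompactness argument, and two normal forms of different combinatorial types can be connected by further blow-ups and collapses along edges of $G-H$ and their endpoints, each step staying in $\CVK_n^\F$; alternatively one may identify $\CVK_n^\F$ with the spine of a relative version of outer space whose connectedness (indeed contractibility) is already in the literature. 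Once connectedness is established, the action $\Stab(\F)\act\CVK_n^\F$ is properly discontinuous (inherited from $\Out(F_n)\act\CVK_n$), cocompact, isometric, and on a connected, locally finite simplicial complex, so the \MilnorSvarc\ lemma yields both the finite generation of $\Stab(\F)$ and the quasi-isometry of the orbit map.
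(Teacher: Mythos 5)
Your treatment of $\Stab(\F)$-invariance and of cocompactness is essentially the paper's own argument, rephrased: in both places you push the marking through $\rho_*$, observe that $\Stab(\F)$ makes the image subgraph invariant, and bound the number of combinatorial types by $3n-3$ natural edges. Those parts are fine.

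The gap is in connectivity, and it is a real one. You reduce to ``normal forms'' by collapsing subforests of $H$ and of $G\setminus H$, which is legitimate and matches the paper's first step. But the remaining content of the proof is precisely to connect two such normal forms, and at that point your argument breaks down in two ways. First, you assert that ``any two normal forms of the same $(G,H)$ type can be interchanged by an element of $\Stab(\F)$ from the cocompactness argument'' --- but lying in the same $\Stab(\F)$-orbit does not give you a path in $\CVK_n^\F$; you are using orbit equivalence as if it implied path connectivity, which is circular, since the whole point of the connectivity claim is to set up a Milnor--\v{S}varc argument. Second, the claim that ``two normal forms of different combinatorial types can be connected by further blow-ups and collapses along edges of $G-H$ and their endpoints, each step staying in $\CVK_n^\F$'' is stated with no mechanism behind it. That step is exactly where the paper does work: after normalizing, it moves each component of $H$ through the (connected) autre espace of the corresponding component of $\F$ so that corresponding components become pointed-equivalent, builds a marking-preserving homotopy equivalence $h\from(G,H)\to(G',H')$ restricting to a homeomorphism on $H$ and an immersion on each edge of $G\setminus H$, and then factors $h$ into Stallings folds, checking that each intermediate marked graph retains a core subgraph realizing $\F$ and that each fold can be realized by a short edge path in $\CVK_n^\F$ via a blow-up/collapse pair. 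Nothing in your proposal reproduces this. The one-sentence fallback --- citing connectedness/contractibility of a relative outer space from the literature --- is a legitimate alternate route, but as written it is a placeholder, not an argument, and you would need to actually exhibit the simplicial identification of $\CVK_n^\F$ with that spine and cite a specific source. As it stands, the connectivity claim is not established.
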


\begin{proof} The second sentence follows from the first, by applying the \MilnorSvarc\ lemma.

\smallskip

To prove that $\Stab(\F)$ leaves $\CVK_n^\F$ invariant, it suffices to prove that its $0$-simplices are invariant. Consider a $0$-simplex $[(G,\rho)]$ in $\CVK_n^\F$ with  subgraph $H \subset G$ with $[H]=\F$ --- to be more explicit, under the isomorphism $\rho_* \from F_n = \pi_1(R_n) \to \pi_1(G)$ we have $\rho_*(\F)=[H]$. Consider $\phi \in \Stab(\F)$ represented by a self-homotopy equivalence $\Phi \from R_n \to R_n$, so $\Phi_*(\F) = \F$ and $(G,\rho) \phi = (G,\rho\Phi)$. We have $(\rho \Phi)_*(\F) = \rho_* \Phi_*(\F) = \rho_*(\F) = [H]$. It follows that the subgraph $H$ still represents the free factor system $\F$ with respect to the marking $\rho \Phi$ on the graph $G$, proving that $[(G,\rho)] \phi \in \CVK_n^\F$.

\smallskip

To prove that $\CVK_n^\F$ has only finitely many $\Stab(\F)$ orbits of cells, again it suffices to prove this for 0-cells. Let $\F=\{[A_1],\ldots,[A_k]\}$. Consider the set of triples $(G,\rho,H)$ such that $[(G,\rho)]$ is a $0$-simplex of $\CVK_n$ and $H \subset G$ is a subgraph with $k$ noncontractible components, having ranks equal to $\rank(A_1),\ldots,\rank(A_k)$, respectively. Define an equivalence relation on this set of triples, where $(G,\rho,H)$ is equivalent to $(G',\rho',H')$ if there exists an isometry $h \from G \to G'$ such that $h(H)=H'$ and $h \composed \rho$ is homotopic to $\rho'$. The group $\Out(F_n)$ acts on such equivalence classes just as it does on marked graphs themselves, and there are only finitely many orbits of this action, by the same proof that $\CVK_n$ has only finitely many $\Out(F_n)$ $0$-simplex orbits. It therefore suffices to prove that if $(G,\rho)$ and $(G',\rho')$ represent $0$-simplices of $\CVK_n^\F$, if $H \subset G$, $H' \subset G'$ are subgraphs with $[H]=[H']=\F$, and if the triples $(G,\rho,H)$, $(G',\rho',H')$ are in the same $\Out(F_n)$ orbit, then they are in the same $\Stab(\F)$ orbit. Choose $\phi \in \Out(F_n)$ so that $(G,\rho,H)\phi$ is equivalent to $(G',\rho',H')$, so there exists an isometry $h \from G \to G'$ and there exists a homotopy equivalence $\Phi \from R_n \to R_n$ representing $\phi$ such that $h(H)=H'$ and $h \composed \rho$ is homotopic to $\rho' \composed \Phi$. We have 
$$\phi_*\F = \phi_* \rho^\inv_*[\pi_1 H] 
   = (\rho')^\inv_* h_*[\pi_1 H] 
   = (\rho')^\inv_* [\pi_1 H'] 
   = \F
$$
and so $\phi \in \Stab(\F)$.

\medskip

We prove connectivity of the subcomplex $\CVK_n^\F$ using a Stallings fold argument. Choose a base vertex $[G'] \in \CVK_n^\F$ so that each component of the core subgraph $H' \subset G'$ realizing $\F$ is a rose and so that all vertices of $G'$ are contained in $H'$ (we are suppressing $F_n$-markings from the notation for marked graphs). Given an arbitrary vertex $[G] \in \CVK_n^\F$ we must find an edge path in $\CVK_n^\F$ connecting $[G']$ to $[G]$. By successively collapsing to a point each edge of $G \setminus H$ having exactly one endpoint in $H$, and collapsing to a point each edge in $H$ with distinct endpoints, we may assume that each component of $H$ is a rose with one vertex and that all vertices of $G$ are contained in $H$. By moving each component of $H$ through the connected autre espace of the corresponding component of~$\F$, we may assume that the components of $H$ and $H'$ corresponding to the same component $[A_i] \in \F$ are equivalent as pointed marked $A_i$ graphs. It follows that there exists a homotopy equivalence $h \from (G,H) \to (G',H')$ that preserves $F_n$-markings, takes vertices to vertices, and restricts to a homeomorphism $H \mapsto H'$. Furthermore, the restriction of $h$ to each edge $e$ of $G \setminus H$ is homotopically nontrivial rel endpoints, and so by homotoping $h \restrict e$ rel endpoints we may assume that $h \restrict e$ is an immersion.

Now factor $h$ as a composition of Stallings folds
$$G = G_0 \xrightarrow{h_1} G_1 \to\cdots\to G_{k-1} \xrightarrow{h_k} G_k=G'
$$
and recall the following properties: each $G_i$ is a marked graph and each $h_i$ preserves markings; for each $i=0,\ldots,k-1$, denoting $g_i = h_k \composed\cdots\composed h_{i+1} \from G_i \to G'$, there exist oriented edges $e_{i1},e_{i2} \subset G_i$ with the same initial vertex, and there exist initial segments $\eta_{i1} \subset g_{i1}$ such that $g_i(\eta_{i1})$ and $g_i(\eta_{i2})$ are equal as edge paths in $G'$, the initial segments $\eta_{i1},\eta_{i2}$ are maximal with respect to this property, and the map $h_i$ identifies the segments $\eta_{i1}$, $\eta_{i2}$ respecting $g_i$-images. Since $h$ restricts to a homeomorphism from $H$ to $H'$ it follows that in each $G_i$ there is a subgraph $H_i$ such that $h_i$ restricts to a homeomorphism from $H_{i-1}$ to $H_i$. It follows that each $H_i$ is a core subgraph representing $\F$ and so $[G_i] \in \CVK_n^\F$ for each $i$.

Proceeding by induction on $k$, we may assume that $[G_1]$, $[G']$ are connected by an edge path in $\CVK_n^\F$, and it remains to find an edge path connecting $[G_0]$ and $[G_1]$. The fold map $h_1 \from G_0 \to G_1$ may be factored as $G_0 \xrightarrow{q'} G' \xrightarrow{q''} G_1$ where $q'$ identifies initial segments of $\eta_{i1}$ and $\eta_{i2}$ which are proper in $e_{i1},e_{i2}$, respectively, and $q''$ identifies the remaining segments of $\eta_{i1},\eta_{i2}$. Since $h_1$ restricts to a homeomorphism from $H_0$ to $H_1$ it follows that $q'$ restricts to a homeomorphism from $H_0$ to $H'$ and $q''$ restricts to a homeomorphism from $H'$ to $H_1$, and again it follows that $H'$ is a core subgraph representing $\F$ and so $[G'] \in \CVK_n^\F$. There is a forest collapse $G' \to G_0$ which collapses to a point the natural segment $q'(\eta_{i1})=q'(\eta_{i2}) \subset G'$, and so $[G],[G']$ are endpoints of an edge of $\CVK_n^\F$. There is also a forest collapse $G' \to G_1$ which collapses to a point the union of the two segments $q'(e_{i1}-\eta_{i1})$ and $q'(e_{i2}-\eta_{i2})$ (one or both of which may be proper segments of natural edges), and it follows that $[G']$, $[G_1]$ are either equal or connected by an edge. 
 \end{proof}

\subsection{Stabilizers of free factor systems of coindex $\ge 2$ in $\Out(F_n)$}
\label{SectionCorank2}

In this section we prove the remaining half of Theorem~\ref{TheoremCoindex}: if $\F$ is a free factor system of $F_n$ with $\coindex(\F) \ge 2$ then $\Stab(\F)$ is distorted in $\Out(F_n)$. We give full details for the special case of a connected free factor system considered in Theorem~\ref{TheoremCorank}: if $A$ is a nontrivial free factor of $F_n$ of rank $\le n-2$ then $\Stab[A]$ is distorted. Afterwards we discuss the changes needed to handle the disconnected case. 

\paragraph{Case 1: Connected free factor systems.} Fix $r$ with $1 \le r \le n-2$, and fix $A$ to be the free factor of $F_n = \<a_1,\ldots,a_n\>$ given by $A = \<a_1,\ldots,a_r\>$. We shall prove exponential distortion of $\Stab[A]$ by exhibiting a sequence $\phi_k \in \Stab[A]$ whose word length in $\Out(F_n)$ has a linear upper bound in terms of $k$, but whose word length in $\Stab[A]$ has an exponential lower bound. This will suffice for proving exponential distortion of every connected rank~$r$ free factor system, because each is the image of $[A]$ under the action of some element of $\Out(F_n)$ and so its stabilizer is the image of $\Stab[A]$ under conjugation by that element.

Let $m=r+1$ so $2 \le m \le n-1$. Let $\theta \in \Out(F_n)$ be represented by any homotopy equivalence $\Theta \from R_n \to R_n$ which is the identity on the subrose $R_n \setminus R_{m} = e_{m+1} \union\cdots\union e_n$, and restricts to an exponentially growing train track map on $R_{m}$. For concreteness we may take
\begin{align*}
\Theta(e_1) &= e_1 e_{m} \\
\Theta(e_i) &= e_{i-1} \quad\text{if $2 \le i \le m$} \\
\Theta(e_i) &= e_i \quad\text{if $m+1 \le i \le n$} 
\end{align*}
which is easily seen to be a homotopy equivalence since $m \ge 2$. This map $\Theta$ is a train track map, meaning that for all $k \ge 0$ the restriction of $\Theta^k$ to each edge is an edge path without cancellation; this is true because $\Theta$ is a positive map, respecting orientation of edges. Also, by an application of the Perron-Frobenius theorem \cite{Hawkins:PerronFrobenius}, the action of $\Theta$ on $R_{m}$ is exponentially growing, meaning that there exist constants $C > 0$, $b > 1$ such that for all $i,j=1,\ldots,m$ and all $k \ge 1$ the number of occurrences of $e_j$ in the edge path $\Theta^k(e_i)$ exceeds $C b^k$; again this uses that $m \ge 2$.

Recall from the introduction the sequence $\phi_k \in \Out(F_n)$ represented by homotopy equivalences $\Phi_k \from R_n \to R_n$ defined by 
\begin{align*}
\Phi_k(e_i) &= e_i \quad\text{if $i < n$} \\
\Phi_k(e_n) &= e_n  u_k
\end{align*}
where $u_k = \Theta^k(e_1)$. From this expression we clearly have $\phi_k \in \Stab[A]$. Letting $\overline\Theta \from R_n \to R_n$ be a homotopy inverse for $\Theta$ which maps $R_{m}$ to itself and is the identity on $R_n \setminus R_{m}$, the homotopy equivalence $\Theta^k \Phi \overline\Theta^k \from R_n \to R_n$ represents $\phi_k$. We therefore have $\phi_k = \theta^k \phi_0 \theta^{-k} \in \Out(F_n)$, an expression which patently demonstrates that the word length of $\phi_k$ in the group $\Out(F_n)$ has a linear upper bound in~$k$. It remains to prove that the word length of $\phi_k$ in the group $\Stab[A]$ has an exponential lower bound.

For future reference we note that $\Theta^k \Phi \overline\Theta^k$ preserves the subrose $R_{m}$ and restricts to a map homotopic to the identity on $R_{m}$ because it is the result of a homotopy conjugation of the identity map $\Phi \restrict R_{m}$. Denoting $B = \<a_1,\ldots,a_{m}\>$ and noting that $[B]=[R_{m}]$, we have the following generalization of the fact that $\phi_k \in \Stab[A]$:
\begin{itemize}
\item For any free factor system $\F$ such that $\F \sqsubset [B]$ we have $\phi_k(\F)=\F$, and so $\phi_k \in \Stab(\F)$.
\end{itemize}

Recall that a vertex of the subcomplex $\CVK_n^{[A]} \subset \CVK_n$ is represented by marked graph $G$ in which $[A] = [H]$ for some core subgraph $H \subset G$.  Given such a $G$ and $H$, let $S$ be the minimal $B$-subtree of the universal cover $\ti G$ and let $K$ be the marked $B$-graph that is the  quotient of $S$  by $B$. The simplicial structure induced on $K$ by the natural structure on $G$ need not be the same as the natural structure on $K$.  Since $A \subgroup B$ and since $A$ is realized by a core subgraph in $G$, the minimal $A$-subtree $S_A$ of $\ti G$ is contained in $S$ and is disjoint from each of its non-identical translates by elements of $B$. Thus, the quotient graph $K_A$ obtained from $S_A$ by dividing out by the action of $A$ is a core subgraph $K_A \subset K$ with $[K_A] = [A]$. As the difference in rank between $A$ and $B$ is $1$, the complement of $K_A$ in $K$ is either a single natural edge with both endpoints in $K_A$ or a loop that is disjoint from $K_A$ union a natural edge that connects the loop to $K_A$. In the former case we let $E$ be the unique natural edge in the complement of $K_A$ and in the latter case we let $E$ be the loop in the complement of~$K_A$.

For   any nontrivial conjugacy class $\con$ in $F_n$ that is not represented by an element of $B$,   we shall define a non-negative integer
$$i(\con,G)  = i_{A,B}(\con,G) 
$$
where the subscripts $A$ and $B$ are suppressed when they are clear by context. First we define $i_{A,B}(c,G)$ for any $c$ in $\con$ as follows.   Let $L_c$ be the axis in $\ti G$ for the action of $c$. Since $ c \not \in B$, the intersection  of   $L_c$ with $S$ is a finite path in $S$ that projects to a finite path $\mu$ in $K$ whose endpoints need not be natural vertices.  Define $i_{A,B}(c,G)$ to be the number of  times that $\mu$ entirely crosses $E$ in either direction. Then define $i_{A,B}(\con,G)$ to be the supremum, over all $c \in F_n$  in the conjugacy class $\con$, of $i_{A,B}(c,G)$.  

To see that $i_{A,B}(\con,G)$ is finite, note that $i_{A,B}(c^b,G) =i_{A,B}(c,G)$ for all $b \in B$ because $L_{c^b} \cap S$ and $L_c \cap S$ project to the same path in $K$.  
For any given compact subset $X \subset \ti G $ there exist only finitely many elements $c$ in the conjugacy class of $\con$ such that  $L_c \cap X$ is non-trivial.  Choosing $X$ to be a fundamental domain in $S$ for the action of $B$, it follows that  every $B$-orbit in $\gamma$ that has nontrivial intersection with $S$ has a representative that intersects $X$. It follows that $i_{A,B}(\con,G)$ is the maximum value of $i_{A,B}(c,G)$ as $c$ varies over the finitely many elements in the conjugacy class of $\con$ such that $L_c \cap X$ is non-empty.  Since  each $i_{A,B}(c,G)$ is finite  by construction, the maximium value is also finite.    

We note the following properties of $i_{A,B}(\con,G)$:
\begin{description}
\item[Conjugacy invariance] $i_{A,B}(\con,G)$ depends only on the conjugacy classes of $A$, $B$, and the equivalence class of the marked graph~$G$.
\item[Equivariance] $\displaystyle i_{A,B}(\con,G \psi) = i_{\psi[A],\psi[B]}(\psi(\con),G)$ for each $\psi \in \Out(F_n)$.
\end{description}

The quantity $i_{A,B}(\con,G)$ plays the same role in our proof of an exponential lower bound as is played by the function $\ti\alpha(\con)$ in Alibegovi\'c's paper \cite{Alibegovic:translation}, where an infinite cyclic subgroup $\Out(F_n)$ is proved to be undistorted by providing a linear lower bound to the word length of the powers of the generator. The quantity $\ti\alpha(\con)$ is a ``count'' of the maximum number of consecutive fundamental domains of other group elements that occur inside the axes of elements in the conjugacy class $\con$. Lemma~2.4 of \cite{Alibegovic:translation} says that this count barely changes under small moves, namely when $\ti\alpha(\con)$ is replaced by $\ti\alpha(g(\con))$ for a generator $g$ of $\Out(F_n)$. 
 
The following lemma shows that $i_{A,B}(\con,G)$ barely changes under small moves, namely as $G$ moves along an edge in $\CVK_n^{[A]}$.

\begin{lemma}\label{LemmaCount}
For any nontrivial conjugacy class $\con$ in $F_n$ which is not represented by an element of $B$, the map $G \mapsto i_{A,B}(\con,G)$ is an integer valued $2$-Lipschitz function on the vertex set of the complex $\CVK_n^{[A]}$.
\end{lemma}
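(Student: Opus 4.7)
The plan is to establish integer-valuedness directly from the definition and then prove the Lipschitz bound by a single-edge analysis that tracks the behavior of the axis-intersection $L_c\cap S$ under a forest collapse.

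First, $i_{A,B}(\gamma,G)$ is integer valued and non-negative by construction: it is the supremum of the non-negative integer counts $i_{A,B}(c,G)$, and the argument given when the quantity was introduced shows that the supremum is attained on one of finitely many $B$-orbits of representatives in~$\gamma$.

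For the Lipschitz bound, Lemma~\ref{LemmaCVKA} tells us that $\CVK_n^{[A]}$ is connected, so it suffices to verify $|i(\gamma,G)-i(\gamma,G')|\le 2$ whenever $G,G'$ span a $1$-simplex in $\CVK_n^{[A]}$; such an edge is represented by a natural subforest collapse $G\to G/F=G'$, whose $F_n$-equivariant lift I denote by $\pi\from\tilde G\to\tilde G'$. Because the supremum is attained over finitely many $B$-orbits and $i(c,G)$ is $B$-conjugacy invariant, it further suffices to prove $|i(c,G)-i(c,G')|\le 2$ for each individual $c\in\gamma$. Fix such~$c$. Lemma~\ref{LemmaCollapse}(3), applied to the subgroups $A$, $B$, and $\langle c\rangle$, yields $\pi(S_A)=S'_A$, $\pi(S)=S'$, and $\pi(L_c)=L'_c$. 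If $L_c\cap S=\emptyset$, uniqueness of geodesics in the tree $\tilde G$ implies that at most one component of $\tilde F$ meets both $L_c$ and $S$, and in that case $L'_c\cap S'$ is either empty or a single point, so both counts vanish. If $L_c\cap S\ne\emptyset$, a Helly-property argument for subtrees of a tree --- applied to $L_c$, $S$, and any component $C$ of $\tilde F$ that meets both --- gives $\pi(L_c\cap S)=L'_c\cap S'$.

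Writing $\tilde\mu:=L_c\cap S$ and $\tilde\mu':=\pi(\tilde\mu)=L'_c\cap S'$, each is a (possibly degenerate) geodesic arc in its ambient tree, decomposing as an alternation of sub-arcs in translates $bS_A$ or $bS'_A$ and full crossings of lifts of $E$ or $E'$; the counts $k:=i(c,G)$ and $k':=i(c,G')$ equal the numbers of full crossings. The $B$-equivariant identification $\pi(bS_A)=bS'_A$ identifies the translates of $S_A$ and $S'_A$ with the coset space $B/A$, so the translates containing the endpoints of $\tilde\mu$ correspond to the same cosets as those containing the endpoints of $\tilde\mu'$. I would then argue that the only discrepancies between the corridor sequences of $\tilde\mu$ and $\tilde\mu'$ arise at the two endpoints $x_\pm$ of $\tilde\mu$: a lift of $E$ fully crossed by $\tilde\mu$ near $x_\pm$ may become only partially contained in $\tilde\mu'$, or conversely, contributing at most $\pm 1$ to the count at each end and giving a total error of at most~$2$.

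The main obstacle will be making the endpoint analysis rigorous, since the Bass--Serre trees $T_S$ and $T_{S'}$ associated to $S$ and $S'$ over the subgroup $B$ need not agree as $B$-trees: the edges $E\subset K$ and $E'\subset K'$ may represent different generators of the $\Z$-factor in the decomposition $B=A*\Z$, related only through the homotopy equivalence $K\to K'$ induced by the forest collapse. I would handle this by showing that, outside a bounded neighborhood of the collapsed forest, $\pi$ preserves the combinatorial structure relevant to counting $E$-crossings, confining any discrepancy between $k$ and $k'$ to within a single corridor at each endpoint of $\tilde\mu$.
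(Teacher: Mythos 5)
Your framework is essentially the same as the paper's: reduce to a single $1$-simplex (forest collapse), fix a single representative $c\in\con$, use Lemma~\ref{LemmaCollapse} to show that the collapse map $\pi$ carries $S$, $S_A$, and $L_c$ to their primed counterparts, pass to the quotient arcs $\mu$, $\mu'$ in $K$, $K'$, and then count full crossings of $E$ versus $E'$ with an endpoint correction. The reduction to a single $c$ via $B$-invariance and finiteness of the relevant orbits is fine, the case $L_c\cap S=\emptyset$ is handled correctly, and the Helly argument for $\pi(L_c\cap S)=L'_c\cap S'$ is correct.

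However, there is a genuine gap, and you flag it yourself but do not close it. The entire counting argument hinges on the combinatorial claim that the collapse map takes the distinguished edge $E\subset K$ onto $E'\subset K'$ and, in particular, that $E$ is not contained in the collapsed forest $F_K$ (in your language: that lifts of $E$ in $S$ map onto lifts of $E'$ in $S'$, one-to-one up to the $B$-action). Without this, a full crossing of a lift of $E$ by $\tilde\mu$ might \emph{a priori} fail to become a full crossing of a lift of $E'$ by $\tilde\mu'$, or could contribute multiply, and your ``corridor sequence'' comparison has nothing to anchor it. You write that you ``would handle this by showing that, outside a bounded neighborhood of the collapsed forest, $\pi$ preserves the combinatorial structure relevant to counting $E$-crossings,'' but this is precisely the content that has to be proved, and the paper does prove it as a separate claim. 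The paper's argument is short: it shows $\pi_{F_K}(K_A)=K'_A$ (from Lemma~\ref{LemmaCollapse}), and then splits into the two combinatorial cases for $E$ --- a single natural edge with both endpoints in $K_A$, or a loop disjoint from $K_A$ --- and in each case deduces $E\not\subset F_K$ and $\pi_{F_K}(E)=E'$ directly (for example, when $E$ is a loop it cannot lie in the forest $F_K$, and its image is a loop in $K'$ whose complement must be $K'_A$). You also need to be careful that $F_K$ is a subforest of the \emph{induced} simplicial structure on $K$ and need not be natural, so partial collapsing of natural edges can happen; the paper's endpoint bookkeeping, which gives $i(c,G)\le i(c,G')\le i(c,G)+2$, uses this. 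Until you prove the $E\mapsto E'$ claim, the proposal does not establish the Lipschitz bound.
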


Accepting this lemma for the moment, we complete the proof of distortion of $\Stab[A]$ by finding an exponential lower bound in $k$ to the word length of $\phi_k$ in the group $\Stab[A]$.

Let $(G_0,\rho_0) = (R_n, Id)  \in \CVK_n^{[A]}$.  Since $A$ is represented by the subrose $R_r=R_{m-1}$ and $B$ by the subrose $R_{m}$, it follows that for any conjugacy class $\con$ in $F_n$ that is not contained in $R_{m}$, the number $i_{A,B}(\con,G_0)$ may be computed as follows: enumerate the maximal subpaths in $R_m$ of the unique circuit in $R_{n}$ that represents $\gamma$, count the number of occurrences of $e_{m}$ (in either direction) in each such subpath, and take the maximum count over all such subpaths.

Let $\con_0$ be the conjugacy class in $F_n$ of $e_n$. Note that $i_{A,B}(\con_0,G_0) = 0$. Let $\con_k = \phi_k(\con_0)$, which is the conjugacy class in $F_n$ represented by
$$\Phi_k(e_n) = \Theta^k \Phi \overline \Theta^k(e_n) = \Theta^k \Phi (e_n) = \Theta^k (e_n e_1) = e_n \Theta^k(e_1)
$$
which is clearly a circuit in $R_n$, because $\Theta \restrict R_{m}$ is a train track map; also, this circuit is not contained in $R_{m}$. Since this circuit has exactly one maximal subpath intersecting the subrose $R_{m}$, namely $\Theta^k(e_1)$, it follows that $i_{A,B}(\con_k,G_0)$ is equal to the number of occurences of $e_{m}$ in the edge path $\Theta^k(e_1)$. As described above after the definition of $\Theta$, this number has an exponential lower bound in $k$. 

It follows that the number
$$\abs{i_{A,B}(\con_0,G_0\phi_k) - i_{A,B}(\con_0,G_0)} = i_{A,B}(\con_0,G_0\phi_k) = i_{\phi_k[A],\phi_k[B]}(\phi_k(\con_0),G_0)=i_{A,B}(\con_k,G_0)
$$
has an exponential lower bound in $k$. Applying Lemma~\ref{LemmaCount}, the length of the shortest path in $\CVK_n^{[A]}$ from $G_0$ to $G_0 \phi_k$ has an exponential lower bound in $k$. Applying Lemma~\ref{LemmaCVKA} and Corollary~\ref{CorollarySubgroupComplex}, the word length of $\phi_k$ has an exponential lower bound in $k$, finishing the proof of distortion of $\Stab[A]$.

\begin{proof}[Proof of Lemma \ref{LemmaCount}]  Consider two marked graphs $G,G'$ representing $0$-simplices in $\CVK_n^{[A]}$. We must show that if these $0$-simplices are connected by a $1$-simplex and if $c$ is contained in the conjugacy class of $\con$ then   $\abs{i(c,G) - i(c,G')} \le 2$. 

 Let $S'$ be the minimal $B'$-subtree of $\wt G'$,  let $K'$ be its quotient   under the action of $B$ and let $K'_A$ be the core subgraph of $K'$ such that $[K'_A] = [A]$.  Let $L'_c$ be the axis for the action of $c$ on $\wt G'$. The projected image of $L_c \cap S$ is a finite path $\mu$ in $K$, and the projected image of $L'_c \cap S'$ is a finite path $\mu'$ in $K'$.  If the complement of $K'_A$ in $K'$ is a single natural edge, denote that edge by $E'$.  Otherwise the complement of $K'_A$ in $K'$ is a loop that we denote $E'$, connected to $K'_A$ by a natural edge.

  We may assume without loss that $G'$ is obtained from $G$ by collapsing a subforest $F \subset G$; let $\pi_F : G \to G'$ be the corresponding quotient map.   Lift $\pi_F :G \to G'$ to the quotient map $\pi_{\wt F} :\wt G \to \wt G'$     that collapses each component of  the full pre-image $\wt F$ of $F$ to a point.  By Lemma~\ref{LemmaCollapse}, $\pi_{\wt F} (S, L_a \cap S) = (S', L'_a \cap S')$.      Letting  $F_K \subset K$ be the forest  that is the quotient of   $\wt F \cap S$ by the action of $B$ and letting $\pi_{F_K}$ be the quotient map that collapses each component of $F_k$ to a point, it follows that $\pi_{F_K} (K, \mu) = (K', \mu')$.   
  
  In what follows, one should  keep in mind that $F_K$ is a union of edges in the induced simplicial structure $K$ but not not necessarily in the natural simplicial structure on $K$.
  
We claim that $E$ is not contained in $F_K$ and moreover that $\pi_{F_K}(E) = E'$. If $E$ is the only natural edge in the complement of $K_A$ then this follows from the fact that $\pi_{F_K}(K_A) = K'_A$ and the fact that the interior of $E'$ is the complement of $K'_A$. If $E$ is a loop that is disjoint from $H$ then $F_K$ does not contain $E$ because $F_K$ is a forest, so $\pi_{F_K}(E)$ is a loop in $K'$ that intersects $K'_A$ in either zero or one points; in zero points then $\pi_{F_K}(E)$ is a loop in the complement of $K'_A$ and if $1$ point then the arc connecting $E$ to $K_A$ was contained in $F_K$ so the interior of $\pi_{F_K}(E)$ is the complement of $K'_A$.  In both cases, $\pi_{F_K}(E) = E'$. This completes the proof of the claim.

        With the possible exception of initial and terminal segments that are properly contained in natural edges, $\mu$ factors as a concatenation of natural edges.  The $\pi_{F_K}$ image of a natural edge $e$ crosses $E'$ if and only if $e = E$ or $e = E^{-1}$ in which case it crosses $E'$ exactly once.   The   $\pi_{F_K}$ image of the initial segment of $E$ crosses $E'$ either zero or one time and similarly for the terminal segment of $E$.  This proves that   $i(c,G)\le  i(c,G') \le i(c, G)+2$ as desired.   
\end{proof}

This completes the proof of the corank~$\ge 2$ case of Theorem~\ref{TheoremCorank}, which is the case of a connected free factor system of co-index~$\ge 2$ in Theorem~\ref{TheoremCoindex}.

\paragraph{Case 2: Free factor systems with $2$ components.} We now describe how to adapt the proof of distortion for stabilizers of connected free factor systems to the case of the stabilizer of a two component free factor system $\F=\{[A_0],[A_1]\}$ such that $I = \coindex(\F) \ge 2$. Our proof will have features that will allow us to apply it also to the case of free factor systems with $\ge 3$ components.

Let $G'$ be a marked graph having rose subgraphs $H'_0,H'_1,H'_2$, an edge $\eta'_0$ oriented from the vertex $v'_0 \in H'_0$ to the vertex $v'_1 \in H'_1$, and an edge $\eta'_1$ oriented from the vertex $v'_2 \in H'_2$ to the vertex $v'_1$, such that $[H'_0]=[A_0]$ and $[H'_1]=[A_1]$. Since $\rank(H'_2) = \coindex(\F) -1 \ge 1$, it follows that the rose $H'_2$ does not degenerate to a single point and so the graph $G'$ has all vertices of valence~$\ge 3$, as required for a marked graph. Collapsing the edge $\eta'_0$ results in a marked graph $G$ and an edge collapse map $G' \mapsto G$ which preserves marking. Under this collapse map, for each $i = 0,1,2$ the subgraph $H'_i$ goes by homeomorphism to a subgraph $H_i \subset G$. Also, the subgraph $H'_0 \union \eta'_0 \union H'_1$ goes by homotopy equivalence to the rose subgraph $H = H_0 \union H_1 \subset G$, and we denote $m = \rank(H) = \rank(H'_0) + \rank(H'_1) \ge 2$; and the edge $\eta'_1$ goes by homeomorphism to an edge $\eta_1 \subset G$ connecting the vertex $v_2 \in H_2$ to the vertex~$v \in H$.

We define the sequence of outer automorphisms $\phi_k \in \Out(F_n)$ by picking representative homotopy equivalences $\Phi_k \from G \to G$ which restrict to the identity on $G \setminus \eta_1 = H \union H_2$ and which satisfy $\Phi_k(\eta_1) = \eta_1 u_k$ for an appropriately chosen sequence of closed paths $u_k$ in~$H$. To be precise, enumerate the edges of $H$ as $e_1,\ldots,e_m$, then let $\theta \in \Out(F_n)$ be represented by a homotopy equivalence $\Theta \from G \to G$ which is the identity on $G \setminus H$ and whose restriction to the rose $H$ is given explicitly by the same train track map used in the connected case, and then use the same sequence $u_k = \Theta{}^k(e_1)$.

The outer automorphism $\phi_k$ preserves the free factor system $\{[H],[H_1]\}$ and restricts to the identity element on each of the groups $\Out(\pi_1(H))$ and $\Out(\pi_1(H_1))$. It follows that $\phi_k$ also preserves any free factor system $\A \sqsubset \{[H],[H_1]\} = \{[H'_0 \union \eta'_0 \union H'_1],[H'_2]\}$, and so $\phi_k \in \Stab(\A)$; this fact will be needed later. For the moment, we apply this fact to the case that $\A=\F=\{[H'_0],[H'_1]\}$ to conclude that $\phi_k \in \Stab(\F)$, as needed.

Noting that $\Phi_k$ is homotopic to $\Theta{}^k \Phi_0 \overline\Theta{}^{k}$, where $\overline\Theta$ is any homotopy inverse for $\Theta$ that preserves $H$ and is the identity on $G \setminus H$, we see that the word length in $\Out(F_n)$ of $\phi_k = \theta^k \phi_0 \theta^{-k}$ has a linear upper bound, as needed.

We would like to get an exponential lower bound for the word length of $\phi_k$ in $\Stab(\F)$ by counting occurrences of the edge $\eta'_0$ in the path $u_k$. This does not quite make sense because $\eta'_0$ and $u_k$ live in different marked graphs $G',G$, but we can relate them by using the edge collapse $G'\mapsto G$ and counting subpaths of $u_k$ which cross between $H_0$ and $H_1$. 

We claim that the number of two-edge subpaths of $u_k$ which cross between $H_0$ and $H_1$ has an exponential lower bound in $k$. To see why, first note that there exists $k_0$ and $j=1,\ldots,m$ such that the edge path $\Theta{}^{k_0}(e_j)$ contains at least two copies of every edge in $H$, and so it contains some edge of $H_0$ which comes before some edge of~$H_1$. By connectivity, $\Theta{}^{k_0}(e_j)$ contains a subpath $\rho = e_{i_0} e_{i_1}$ with $e_{i_0}$ in $H_0$ and $e_{i_1}$ in $H_1$. Next, noting that $\Theta{}^{k-k_0}(e_1)$ contains an exponentially growing number of copies of $e_j$, it follows that $u_k=\Theta{}^k(e_1)$ contains an exponentially growing number of copies of $\rho$, proving the claim.

Now we pull the description of $\phi_k$ back to the marked graph $G'$. The edges of $H'_0 \union H'_1$ can be oriented and listed as $e'_1,\ldots,e'_m$ so that the edge collapse $G' \mapsto G$ takes $e'_i$ to $e_i$. Let $u'_k$ be the unique closed edge path in $G'$ with initial and terminal points at $v'_1$ such that the edge path in $G$ obtained from $u'_k$ by collapsing each copy of $\eta'_0$ is equal to $u_k$. To be precise, $u'_k$ is obtained from $u_k$ by adding prime symbols $\prime$ to the edges in the edge path~$u_k$, inserting a copy of $\eta'_0$ or $\bar \eta'_0$ wherever $u_k$ has a two edge subpath which crosses between $H_0$ and $H_1$, inserting a copy of $\bar \eta'_0$ at the beginning if $u_k$ starts with an edge of $H_0$, and inserting a copy of $\eta'_0$ at the end if $u_k$ ends with an edge of $H_0$. From this description it follows that the number of occurrences of $\eta'_0$ or $\bar \eta'_0$ in $u'_k$ has an exponential lower bound. 

Note that $\phi_k$ is represented by the homotopy equivalence $\Phi'_k \from G' \to G'$ which restricts to the identity on $G' \setminus \eta'_1 = H'_0 \union \eta'_0 \union H'_1 \union H'_2$ and satisfies $\Phi'_k(\eta'_1) = \eta'_1 u'_k$.

\medskip

We are now ready to adapt the proof of Case~1 to proving Case 2. Choose a free factor $B \subgroup F_n$ in the conjugacy class $[H]$. Choose the free factors $A_0,A_1$ in the conjugacy classes $[A_0]=[H_0]$, $[A_1]=[H_1]$ so that $B=A_0 * A_1$. For an arbitrary vertex $[(G^*,\rho^*)] \in \CVK^\F_n$, let $S$ be the minimal $B$-subtree of $\wt G^*$. The minimal $A_0$ and $A_1$ subtrees $S_{A_0},S_{A_1}$ are disjoint from each other and from each of their non-identical translates by elements of $B$, so the quotient graph $K$ obtained from $S_B$ by dividing out by the action of $B$ contains disjoint core subgraphs $K_{A_0}$ and $K_{A_1}$ such that $[K_{A_0}] = A_0$ and $[K_{A_1}] = A_1 $.  Since  the rank of $B$ is the sum of the rank of $A_1$ and the rank of $A_2$, the complement of $K_{A_0} \cup K_{A_1}$ in $K$ is a single natural edge $E$.   Given $c \in F_n$ that is not contained in $B$,   define $i(c,G^*)$ to be the number of times that the projection of $L_c \cap S$ into $K$ completely crosses $E$.   Given a conjugacy class $\con \in F_n$ that is not represented by an element in $B$, define $i(c,G^*)$ to be the maximum of $i(c, G^*)$ over all  elements of $F_n$ representing $\con$.  The proof of Lemma~\ref{LemmaCount} easily adapts to this situation, with the conclusion that if $\con$ is not represented by an element of $B$ then the map $G^* \mapsto i(\con,G^*)$ is an integer valued 2-Lipschitz function on the vertex set of $\CVK^\F_n$.

One last issue is to choose an appropriate conjugacy class $\con_0$. In the connected case we chose $\con_0$ to be represented by the loop edge whose role is played here by $\eta'_1$, but $\eta'_1$ does not form a loop in $G'$. Instead we choose $\con_0$ to be the conjugacy class represented in $G'$ by a loop of the form $\gamma' = \rho' \eta'_1 \sigma' \bar \eta'_1$. We choose $\rho'$ to any loop in $H'_2$. We choose $\sigma'$ to be a loop in $H' = H'_0 \union \eta'_0 \union H'_1$ based at $v'_1$, taking care to avoid cancellation in the path 
$$\Phi'_k(\gamma') = \Phi'_k(\rho') \Phi'_k(\eta'_1) \Phi'_k(\sigma') \Phi'_k(\bar \eta'_1) = \rho' \eta'_1 u'_k \sigma' \bar u'_k \bar \eta'_1
$$
The two places where cancellation can occur is at the concatenation points of $u'_k \sigma'$ and of~$\sigma' \bar u'_k$. Since $u'_k$ is a concatenation of the positively oriented edges $e'_1,\ldots,e'_m$ and copies of $\eta'_0$ and $\bar \eta'_0$, we need only choose $\sigma'$ to start with some positively oriented edge $e'_1,\ldots,e'_m$ and to end with some negatively oriented edge $\bar e'_1,\ldots,\bar e'_m$. For example, taking $e'_{l_0}$ in $H'_0$ and $e'_{l_1}$ in $H'_1$ we may take $\sigma' = e'_{l_1} \bar \eta'_0 e'_{l_0} \eta'_0 \bar e'_{l_1}$. Note that $i(\con_0,G)=2$, coming from the $2$ times that $\sigma'$ crosses $\eta'_0$ or $\bar \eta'_0$. Also, letting $\con_k$ be the conjugacy class represented by the loop $\Phi'_k(\gamma')$, and noting that this loop has a single subpath $u'_k \sigma' \bar u'_k$ in the subgraph $H'$, and that this subpath has an exponentially growing number of occurrences of $\eta'_0$ or $\bar \eta'_0$, it follows that $i(\con_k,T)$ has an exponential lower bound in $k$. 

We therefore have $\abs{i(\con_0,G\phi_k) - i(\con_0,G)} = i(\con_k,G) - 2$ which has an exponential lower bound, and so the $\Stab(\F)$-word length of $\phi_k$ has an exponential lower bound, completing the proof in Case~2.

\subparagraph{Case 3: Free factor systems with $\ge 3$ components.} We reduce to case 2 as follows. We wish to prove that $\Stab(\F)$ is distorted, where $\F=\{[A_0],[A_1],\ldots,[A_{M-1}]\}$ has $M \ge 3$ components. As in case~2, let $G'$ be a marked graph having rose subgraphs $H'_0,H'_1,H'_2$, an edge $\eta'_0$ from the vertex $v'_0 \in H'_0$ to the vertex $v'_1 \in H'_1$, and an edge $\eta'_1$ from the vertex $v'_2 \in H'_2$ to $v'_1$, so that the component $[A_0]$ of $\F$ is the free factor support of $H'_0$, the component $[A_1]$ is the free factor support of $H'_1$, but now we add the additional requirement that the remaining components $[A_2],\ldots,[A_{M-1}]$ of $\F$ are supported by subroses of $H'_2$. The construction of Case~2 produces a sequence $\phi_k \in \Out(F_n)$ which stabilizes all free factor systems $\A \sqsubset \{[H'_0 \union \eta'_0 \union H'_1],[H'_2]\}$; this includes both of the free factor systems $\{[A_0],[A_1]\}$ and $\F$. Also, the construction proves that the word length of $\phi_k$ in $\Out(F_n)$ has a linear upper bound and that its word length in $\Stab\{[A_0],[A_1]\}$ has an exponential lower bound. 

The group $\Stab(\F)$ acts on the finite set $\F$; let $\Stab_0(\F)$ be the kernel of this action, fixing each of the connected free factors $[A_0]$, $[A_1]$, \ldots, $[A_{M-1}]$. It follows that $\Stab_0(\F)$ is a subgroup of $\Stab\{[A_0],[A_1]\}$. Since the word length of $\phi_k$ in $\Stab\{[A_0],[A_1]\}$ has an exponential lower bound, its word length in $\Stab_0(\F)$ must also have an exponential lower bound. Since $\Stab_0(\F)$ is a finite index subgroup of $\Stab(\F)$, it follows that the word length of $\phi_k$ in $\Stab(\F)$ has an exponential lower bound, completing the proof in Case~3.


\newcommand{\etalchar}[1]{$^{#1}$}
\def\cprime{$'$} \def\cprime{$'$}
\providecommand{\bysame}{\leavevmode\hbox to3em{\hrulefill}\thinspace}
\providecommand{\MR}{\relax\ifhmode\unskip\space\fi MR }
\providecommand{\MRhref}[2]{%
  \href{http://www.ams.org/mathscinet-getitem?mr=#1}{#2}
}
\providecommand{\href}[2]{#2}

 \end{document}